\DeclareMathOperator*{\coker}{coker}
\DeclareMathOperator{\rk}{rk}
\DeclareMathOperator{\Pic}{Pic}
\DeclareMathOperator{\Ext}{Ext}
\DeclareMathOperator{\Hom}{Hom}
\DeclareMathOperator{\tr}{tr}
\DeclareMathOperator{\At}{At}
\DeclareMathOperator{\ch}{ch}
\DeclareMathOperator{\cH}{H}
\newcommand{\id}{\mathrm{id}}
\newcommand{\ta}[1]{#1^{[2]}}
\newcommand{\tb}[1]{#1^{[3]}}
\newcommand{\tn}[1]{#1^{[n]}}
\newcommand{\tm}[1]{#1^{[n-1]}}
\newcommand{\nt}[1]{#1^{[n-1,n]}}
\newcommand{\mc}[1]{\mathcal{#1}}
\newcommand{\tamc}[1]{\ta{\mc{#1}}}
\newcommand{\tbmc}[1]{\tb{\mc{#1}}}
\newcommand{\tnmc}[1]{\tn{\mc{#1}}}
\newcommand{\tmmc}[1]{\tm{\mc{#1}}}
\DeclareMathOperator*{\NS}{NS}
\newcommand{\ZZ}{\mathbb{Z}}
\newcommand{\Km}{\mathrm{Km}}
\newcommand{\PP}{\mathbb{P}}
\newcommand{\CC}{\mathbb{C}}
\newcommand{\QQ}{\mathbb{Q}}
\DeclareMathOperator*{\im}{im}
\newcommand{\km}[1]{{#1^{[\Km]}}}
\newcommand{\kmmc}[1]{\mc{#1}^{\Km}}
\newcommand{\kmc}[1]{\mc{#1}^{K_3}}
\DeclareMathOperator{\td}{td}
\newcommand{\Sy}[1]{\mathfrak{S}_{#1}}
\newcommand{\corres}{\mathrel{\widehat{=}}}
\makeatletter \renewenvironment{proof}[1][\proofname]
{\par\pushQED{\qed}\normalfont
\topsep6\p@\@plus6\p@\relax\trivlist \item[\hskip 1.5em \itshape#1\@addpunct{.}]\ignorespaces}{\popQED\endtrivlist\@endpefalse} \makeatother 
\newtheoremstyle{thm}
  {1.5\topsep}   
  {1.5\topsep}   
  {\itshape}  
  {0pt}       
  {\bfseries} 
  {.}         
  {5pt plus 1pt minus 1pt} 
  {} 
\newtheoremstyle{defi}
  {1.5\topsep}   
  {1.5\topsep}   
  {\normalfont}  
  {0pt}       
  {\bfseries} 
  {.}         
  {5pt plus 1pt minus 1pt} 
  {} 
\newtheoremstyle{rem}
  {1.5\topsep}   
  {1.5\topsep}   
  {\normalfont}  
  {0pt}       
  {\bfseries} 
  {.}         
  {5pt plus 1pt minus 1pt} 
  {} 
\theoremstyle{thm}
\newtheorem{thm}{Theorem}[section]
\newtheorem*{thm*}{Theorem}
\newtheorem{prop}[thm]{Proposition}
\newtheorem*{prop*}{Proposition}
\newtheorem{lem}[thm]{Lemma}
\newtheorem{cor}[thm]{Corollary}
\theoremstyle{defi}
\newtheorem{defi}[thm]{Definition}
\newtheorem{exa}[thm]{Example}
\theoremstyle{rem}
\newtheorem{rem}[thm]{Remark}
\begin{document}

\title{Tautological Sheaves: Stability, Moduli Spaces and Restrictions to Generalised Kummer Varieties}
\author{Malte Wandel\\\\Leibniz Universität Hannover\\e-mail: wandel@math.uni-hannover.de}

\maketitle
\begin{abstract}
Results on stability of tautological sheaves on Hilbert schemes of points are extended to higher dimensions and transferred to abelian surfaces and to the restriction of tautological sheaves to generalised Kummer varieties. This provides a big class of new examples of stable sheaves on higher dimensional irreducible symplectic manifolds. Some aspects of deformations of tautological sheaves are studied.\\\\
Keywords:\ moduli spaces, irreducible holomorphic symplectic manifolds, abelian surfaces, deformation\\
MSC:\ 14D20, 14F05, 14J28, 14J60, 14K05
\end{abstract}
\tableofcontents
\setcounter{section}{-1}
\section{Introduction}
Moduli spaces of sheaves on symplectic surfaces play an important role in the construction of irreducible holomorphic symplectic (short: irreducible symplectic) manifolds. A natural question that arises is the following: Can we iterate this process? That is, can we construct new examples of irreducible symplectic manifolds using moduli spaces of sheaves on known higher dimensional irreducible symplectic manifolds such as Hilbert schemes of points on $K3$ surfaces or generalised Kummer varieties? Certainly it is difficult to answer this question in this generality. On the other hand almost no examples of stable sheaves on higher dimensional irreducible symplectic manifolds had been encountered. In \cite{Sch10} the first example of a rank two stable vector bundle on the Hilbert scheme of two points on a $K3$ surface was found. In \cite{Wan12} this result was drastically generalised continuing along the following concept: Start with a stable sheaf on a $K3$ surface, transfer this sheaf to the Hilbert schemes of points using the universal property of the latter and obtain what is called a tautological sheaf and, finally, prove its stability.

This article is to be understood as a sequel to \cite{Wan12}. We further extend its results and transfer them to the case of abelian surfaces. 
The Hilbert scheme of points on an abelian surface contains as a closed subvariety the generalised Kummer variety associated with the surface. We study the restriction of tautological sheaves to Kummer varieties and obtain non-trivial examples of stable sheaves on these manifolds.


The paper is organised as follows: We begin in Section 1.1 by collecting known results on the geometry of hilbert schemes of points on a surface and prove a few technical lemmata used in the sequel. Next, in Section 1.2 we introduce the main objects of this article, the tautological sheaves. In Section 1.3 we introduce polarisations on the Hilbert schemes and compute the slopes of tautological sheaves with respect to these polarisations.
In Chapter 2 we analyse destabilising subsheaves of tautological sheaves on Hilbert schemes of three or more points.
The case of abelian surfaces is treated in Chapter 3. We start by computing the Picard group and intersection numbers for the Hilbert schemes of two (Section 3.1) resp.\ three (Section 3.2) points on an abelian surface. Next we transfer the results on stability of tautological sheaves from regular surfaces to abelian surfaces (Sections 3.3 and 3.4). Finally, we prove stability of the restriction of certain tautological sheaves to the Kummer surface (Section 3.5) and the generalised Kummer variety of dimension four (Section 3.6).
We conclude the paper by studying deformations of tautological sheaves in Chapter 4. We show that the moduli spaces of tautological sheaves can be singular in Section 4.2 and investigate in which way we may deform tautological sheaves together with the underlying manifold in Section 4.3.

\subsection*{Notations and conventions}
\begin{itemize}
\item The base field of all varieties and schemes is the field of complex numbers.
\item For the intersection product inside the Chow or cohomology ring of a smooth variety we write either $l.m$ or $l\cdot m,$ for classes $l$ and $m$, or we will just use juxtaposition $lm$.
\item A \em polarisation \em on a variety $X$ is the choice of a class $H$ inside the ample cone $\mathrm{Amp}(X)$.
\item A \em sheaf \em on a variety $X$ is a coherent $\mc{O}_X$-module.
\item We write $\cong$ to indicate an isomorphism of abelian groups, vector spaces and varieties. We use $\simeq$ for isomorphisms of sheaves.
\item All functors such as pushforward, pullback, local and global homoorphisms and tensor product are not derived unless mentioned otherwise.
\item Let $\mc{F}$ be a locally free sheaf. A locally free subsheaf $\mc{L}$ is called a \em subbundle \em of $\mc{F}.$ (In literature this is sometimes only used for locally free subsheaves $\mc{L}$ such that the quotient $\mc{F}/\mc{L}$ is locally free, too.)
\item Let $\sigma\colon Y\rightarrow Z$ be a morphism of smooth projective varieties. For all sheaves $\mc{H}$ on $Y$ and $\mc{G}$ on $Z$ there is an adjunction isomorphism of $\CC$-vector spaces
\[\Hom_Y(\sigma^\star\mc{G},\mc{H})\cong\Hom_Z(\mc{G},\sigma_\star\mc{H})\]
which will be denoted by $\sigma^\star\dashv\sigma_\star$ ($\sigma^\star$ is left adjoint to $\sigma_\star$).
\item Let $Y\times Z$ be the product of two varieties $Y$ and $Z$. Denote the projections to the corresponding factors by $\pi_1$ and $\pi_2$. For sheaves $\mc{H}$ on $Y$ and $\mc{G}$ on $Z$ we define
\begin{eqnarray*}
\mc{H}\boxtimes \mc{G}&:=&\pi_1^\star\mc{H}\otimes\pi_2^\star\mc{G}\text{ and}\\[4pt]
\mc{H}\boxplus \mc{G}&:=&\pi_1^\star\mc{H}\oplus\pi_2^\star\mc{G}.
\end{eqnarray*}
They are called \em exterior tensor product \em and \em exterior sum \em of the sheaves $\mc{H}$ and $\mc{G}$. In the case $Y=Z$ and $\mc{H}=\mc{G}$ we also write $\mc{H}^{\boxtimes2}$ and $\mc{H}^{\boxplus2}$. This, of course, generalises to products of more than two varieties.
\item Let $G$ be a finite group acting on a smooth projective variety $X$. Let $f\colon X\rightarrow X/G$ be the quotient. If $\mc{L}$ is a $G$-equivariant line bundle on $X$, the pushforward $f_\star\mc{L}$ inherits the $G$-linearisation. Since $G$ acts trivially on $X/G,$ we can define $\mc{L}^G$ to be the sheaf of $G$-invariant sections of $f_\star\mc{L}$, which is a line bundle on $X/G$.
Conversely, the pullback gives a homomorphisms $f^\star\colon \Pic(X/G)\rightarrow \Pic^G(X)$ to the group of  $G$-linearised line bundles on $X.$ This map is injective and its image coincides with line bundles $\mc{L}$ such that for every $x\in X$ the stabiliser group $G_x$ acts trivially on the fibre $\mc{L}_x$ (cf.\ \cite{KKV89}).
Finally, by taking first Chern classes, for every class $l$ in $\NS X$ we can define a class $l^G\in\NS(X/G)$.
\item Let $Y$ be a smooth projective variety $Y$ and $\mc{E}$ a vector bundle. There is an extension
\[\At(\mc{E})\in\Ext^1(\mc{E},\mc{E}\otimes\Omega_Y)\]
called the \em Atiyah class \em of $\mc{E}.$ It was introduced by Atiyah in \cite{Ati57} as an obstruction class for the existence of connections on principal bundles. This class satisfies the following properties:
\begin{itemize}
\item Denote by $\tr\colon\Ext^1(\mc{E},\mc{E}\otimes\Omega_Y)\rightarrow \cH^1(Y,\Omega_Y)$ the natural trace map. We have
\[\tr \At(\mc{E})=c_1(\mc{E}).\]
\item For any morphism $f\colon Z\rightarrow Y$ there is an induced map $\tilde{f}\colon\Ext^1(\mc{E},\mc{E}\otimes\Omega_Y)\rightarrow \Ext^1(f^\star\mc{E},f^\star\mc{E}\otimes\Omega_Z).$ The Atiyah class satisfies
\[\tilde{f}(\At(\mc{E}))=\At(f^\star\mc{E}).\]
\item If $\mc{E}'$ is another vector bundle on $Y,$ we have
\[\At(\mc{E}\otimes\mc{E}')=\At(\mc{E})\otimes\id+\id\otimes\At(\mc{E}').\]
\end{itemize}
More generally, the Atiyah class can be defined for any object in the derived category by using locally free resolutions. For a discussion of some of the properties of the Atiyah class in a more modern language and in the realm of algebraic geometry the interested reader is referred to \cite[Sect. 10.1]{HL97}.

\pagebreak
\item Furthermore, we use the following notation:

\begin{tabular}{lcl}
$\mc{T}_X$&$-$&tangent sheaf of a variety $X$\\
$\Omega_X^k$&$-$&sheaf of $k$-th order differentials\\
$\omega_X$&$-$&canonical line bundle\\
$\omega_f$&$-$&relative canonical sheaf of a morphism $f$\\
$\td_X$&$-$&Todd class of a variety $X$\\
$\td_f$&$-$&Todd class of the relative tangent bundle of a morphism $f$\\
$c_i$&$-$&$i$-th Chern class\\
$\cH^\ast$&$-$&cohomology ring of a variety\\
&&or collection of all cohomology groups of a sheaf\\
$\NS X$&$-$&N\'{e}ron$-$Severi group of a variety $X$\\
$\Pic X$&$-$&Picard group of $X$\\
$(-)^{\Sy{n}}$&$-$&invariant part with respect to the $\Sy{n}$-action\\
$S^nX$&$-$&$n$-th symmetric product of the variety $X$\\
$S^nV$&$-$&$n$-th symmetric product of the vector space $V$\\
$V^\vee$&$-$&dual of the vector space $V$\\
$\mc{F}^\vee$&$-$&dual of the sheaf $\mc{F}$ (defined as $\mc{H}om(\mc{F},\mc{O})$)\\
$h^i(\mc{F})$&$-$&$\dim_\CC \cH^i(\mc{F})$
\end{tabular}
\end{itemize}

\subsection*{Acknowledgements}
I want to thank Klaus Hulek and David Ploog for their support, their help and for many interesting discussions. Thanks to Manfred Lehn, Hisanori Ohashi and Jesse Kass for their good advice.

\section{Preliminairies}\label{sectionprelim}
Throughout this chapter we consider a smooth projective surface $X$  together with a polarisation $H\in\NS (X).$
\subsection{Geometric Considerations}\label{subsectiongeom}
For $n=2$ the geometry of the Hilbert scheme points on a surface is very well accessible: In fact, $\ta{X}$ is the blowup of the symmetric square $S^2X$ along the diagonal. If $n>2$, the situation is much more delicate. An important fact is that the Hilbert$-$Chow morphism $\rho_n\colon\tn{X}\rightarrow S^nX$ is no longer a global blowup morphism. But following \cite{Beau83}, we see that outside codimension two subschemes $\rho_n$ is the blowup of the big diagonal. This is important, especially if we want to determine the Picard group of the Hilbert scheme. Nevertheless, the geometry of Hilbert schemes has been intensively studied. Let us summarise the most important results.\\

Following \cite[Section 1]{EGL01}, we consider the following diagram:
\[\xymatrix{
& \nt{X}\ar[d]^{\sigma_n} \ar[rr]_{w_n} \ar[drr]_{\psi_n} & & \Xi_n\ar[d]_{p_n}\ar[r]_{q_n}&X\\
& \Xi_{n-1}\subset X\times\tm{X}\ar[ld]^{p_{n-1}}\ar[dr]_{q_{n-1}}& & \tn{X}\ar[rd]_{\rho_n}&&X^n\ar[dl]\\
\tm{X}&&X&&S^nX.
}\]
Here we denote by
\[\Xi_n:=\{(x,\xi)\mid x\in\xi\}\subset X\times\tn{X}\]
the universal subscheme and by
\[\nt{X}:=\{(\xi',\xi)\mid\xi'\subset\xi\}\subset\tm{X}\times\tn{X}\]
the so-called \em nested Hilbert scheme. \em 

We have the flat degree $n$ covering $p_n\colon\Xi_n\rightarrow\tn{X}$ which is, in fact, the restriction of the second projection $X\times \tn{X}\rightarrow \tn{X}$. Furthermore, $\nt{X}$ is ismorphic to the blowup of $X\times\tm{X}$ along the universal subscheme $\Xi_{n-1}$. Denote this blowup morphism by $\sigma_n$ and the projections from $X\times\tm{X}$ to $\tm{X}$ and $X$ by $p_{n-1}$ and $q_{n-1}$, respectively. By \cite[Prop.\ 2.1]{ES98} the second projection $\psi_n\colon \nt{X}\rightarrow \tn{X}$ factors through $\Xi_n$ and from \cite[Prop.\ 3.5.3]{Hai01} it follows that $w_n$ is an isomorphism outside codimension four subschemes. Thus the morphism $\psi_n$ is flat outside codimension four. Finally, we have $q_{n-1}\circ\sigma_n = q_n\circ w_n$\\

We have
\[\Pic^0\tn{X}\cong \Pic^0X\]
and embeddings
\[(-)_{\tn{X}}\colon\NS X\hookrightarrow \NS\tn{X},\hspace{5pt} l\mapsto l_{\tn{X}}:=\rho_n^\star(l^{\boxplus n})^{\Sy{n}}\quad\text{and}\quad(-)_{\tn{X}}\colon\Pic X\hookrightarrow\Pic\tn{X}.\]
Furthermore, there is a class $\delta_n\in\NS\tn{X},$ such that $2\delta_n$ is the class of the divisor consisting of all non-reduced subschemes $\xi\subset X$. There is a line bundle $\mc{O}(\delta_n)$ with first Chern class $\delta_n$ such that its pullback $p_n^\star\mc{O}(\delta_n)$ is the relative canonical sheaf of $p_n$.

\begin{lem}\label{eqhigherninductd}
Let $D_n$ be the exceptional divisor of $\sigma_n.$ We have
\begin{equation*}
\psi_n^\star\mc{O}(\delta_n)\simeq\mc{O}(D_n)\otimes\sigma_n^\star p_{n-1}^\star\mc{O}(\delta_{n-1}).
\end{equation*}
\end{lem}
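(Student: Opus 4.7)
The plan is to derive the stated line-bundle identity by pushing forward a Mayer--Vietoris sequence from $X \times \nt{X}$ to $\nt{X}$ and taking determinants. The starting point is a scheme-theoretic decomposition of the universal subscheme $\psi_n^\star \Xi_n \subset X \times \nt{X}$ into two codimension-two components: the graph $\Gamma$ of the morphism $q_n \circ w_n = q_{n-1} \circ \sigma_n \colon \nt{X} \to X$, which records the ``extra'' point of $\xi$ over $\xi'$, and the pullback $\phi_n^\star \Xi_{n-1}$, where $\phi_n := p_{n-1} \circ \sigma_n$, which records the smaller subscheme. These two components meet along a Cartier divisor of $\Gamma \simeq \nt{X}$ that, via the section isomorphism, is identified with the exceptional divisor $D_n$.

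Granting the scheme-theoretic identity $\psi_n^\star \Xi_n = \Gamma \cup \phi_n^\star \Xi_{n-1}$ with intersection $D_n$, the Mayer--Vietoris exact sequence
\begin{equation*}
0 \to \mc{O}_{\psi_n^\star \Xi_n} \to \mc{O}_\Gamma \oplus \mc{O}_{\phi_n^\star \Xi_{n-1}} \to \mc{O}_{D_n} \to 0
\end{equation*}
holds on $X \times \nt{X}$. Pushing this forward along the projection $\pi \colon X \times \nt{X} \to \nt{X}$ (all terms are supported on subschemes finite over $\nt{X}$, so higher direct images vanish), and using flat base change to rewrite $\pi_\star \mc{O}_{\psi_n^\star \Xi_n} \simeq \psi_n^\star \mc{O}_X^{[n]}$, $\pi_\star \mc{O}_{\phi_n^\star \Xi_{n-1}} \simeq \phi_n^\star \mc{O}_X^{[n-1]}$, and $\pi_\star \mc{O}_\Gamma \simeq \mc{O}_{\nt{X}}$, yields
\begin{equation*}
0 \to \psi_n^\star \mc{O}_X^{[n]} \to \mc{O}_{\nt{X}} \oplus \phi_n^\star \mc{O}_X^{[n-1]} \to \mc{O}_{D_n} \to 0.
\end{equation*}
Taking determinants in $\Pic(\nt{X})$ and substituting the known formulas $\det \mc{O}_X^{[k]} \simeq \mc{O}(-\delta_k)$ together with $\det \mc{O}_{D_n} \simeq \mc{O}(D_n)$ (from the Koszul resolution $0 \to \mc{O}(-D_n) \to \mc{O}_{\nt{X}} \to \mc{O}_{D_n} \to 0$), one obtains $\psi_n^\star \mc{O}(-\delta_n) \simeq \phi_n^\star \mc{O}(-\delta_{n-1}) \otimes \mc{O}(-D_n)$. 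Inverting and rewriting $\phi_n^\star = \sigma_n^\star p_{n-1}^\star$ then gives the lemma.

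The principal obstacle is verifying the scheme-theoretic decomposition of $\psi_n^\star \Xi_n$: set-theoretically it is clear, since any $\xi$ is the union of $\xi'$ and the extra point, but the scheme structures coincide only thanks to the blowup. In local coordinates $(U, V, s, w, \ldots)$ on $\nt{X}$ near a generic point of $D_n = \{s = 0\}$, chosen so that $\sigma_n$ corresponds to the identification $(U, V) = (s, ws)$ in the $X$-factor, the ideals $\mc{I}_\Gamma = (U - s,\, V - ws)$ and $\mc{I}_{\phi_n^\star \Xi_{n-1}} = (U, V)$ satisfy $\mc{I}_\Gamma \cap \mc{I}_{\phi_n^\star \Xi_{n-1}} = (U(U - s),\, V - wU)$, which is exactly the ideal of the relative length-two subscheme collapsing to a double point in direction $w$ at $s = 0$. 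A compatible global argument, extracted from the universal property of the blowup $\sigma_n$ along $\Xi_{n-1}$, then upgrades this local identity to the required global equality of schemes, after which the rest of the proof is formal.
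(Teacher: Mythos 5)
Your derivation reaches the correct identity, but by a genuinely different route from the paper's. The paper simply quotes sequence (6) of \cite{EGL01} --- an exact sequence on $\nt{X}\times X$ with outer terms $(\psi_n\times\id_X)^\star\mc{O}_{\Xi_n}$ and $(\phi_n\times\id_X)^\star\mc{O}_{\Xi_{n-1}}$ and a line bundle supported on the graph of the residual-point map as kernel --- then pushes down and takes determinants. You instead reconstruct the content of that sequence from scratch as the Mayer--Vietoris sequence of the scheme-theoretic decomposition of the pulled-back universal subscheme into the graph $\Gamma$ and $\phi_n^\star\Xi_{n-1}$. The two sequences carry the same information (your kernel $\mc{I}_{\phi_n^\star\Xi_{n-1}}/\mc{I}_{\psi_n^\star\Xi_n}$ is exactly the ideal sheaf of $D_n$ in $\Gamma$, i.e.\ the paper's first term), and the pushforward and determinant bookkeeping is identical; what your version buys is independence from the citation, at the price of having to justify the decomposition yourself. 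Your local computation $(U-s,V-ws)\cap(U,V)=(U(U-s),\,V-wU)$ is correct, and your signs are internally consistent: $\det\tnmc{O}_X\simeq\mc{O}(-\delta_n)$ is the convention forced by Lemma \ref{lemtautc1} (the paper's own proof writes the opposite sign for this determinant and compensates with the sign of the kernel, the two slips cancelling in the final statement).

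The one step you must not leave as a promissory note is the passage from ``near a generic point of $D_n$'' to a global identity of schemes: the appeal to ``the universal property of the blowup'' is not an argument, and proving the scheme-theoretic equality everywhere --- in particular over the locus where the residual point collides with a \emph{non-reduced} point of $\xi'$, where your local model does not apply --- is precisely the nontrivial content of the EGL sequence. Fortunately you do not need the global statement. The conclusion is an isomorphism of line bundles on the smooth variety $\nt{X}$, and restriction to the complement of a closed subset of codimension at least two is injective on $\Pic\nt{X}$; any discrepancy between $\mc{I}_{\psi_n^\star\Xi_n}$ and $\mc{I}_\Gamma\cap\mc{I}_{\phi_n^\star\Xi_{n-1}}$ supported in codimension two does not affect determinants. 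Your chart covers $\nt{X}$ away from the locus where the residual point meets a non-reduced point of $\xi'$ (a proper closed subset of the divisor $D_n$, hence of codimension at least two) and away from the codimension-four locus where $w_n$ fails to be an isomorphism. Replace the claimed ``global upgrade'' by this codimension-two reduction and the proof closes.
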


\begin{proof} This is a consequence of the geometric considerations in Section 2 of \cite{EGL01}. We introduce the following notation: Set
\begin{eqnarray*}
\psi_X&:=&\psi_n\times\id_X\\
\phi_n&:=&p_{n-1}\circ\sigma_n\\
\phi_X&:=&\phi_n\times \id_X\\
j&:=&\id_{\nt{X}}\times(q_{n-1}\circ\sigma_n)
\end{eqnarray*}
These maps fit into the following diagram, where $\pi$ is the first projection:
\[\xymatrix{\nt{X}\ar[r]^(.4)j&\nt{X}\times X\ar[ld]^(.45){\psi_X}\ar[rd]_(.45){\phi_X}\ar[r]^(.55)\pi&\nt{X}\\
\tn{X}\times X&&\tm{X}\times X}.\]
Using this notation, we can write sequence (6) from \cite{EGL01} as follows:
\begin{equation*}
0\rightarrow j_\star\mc{O}(D_n)\rightarrow \psi_X^\star\mc{O}_{\Xi_n}\rightarrow\phi_X^\star\mc{O}_{\Xi_{n-1}}\rightarrow 0.
\end{equation*}
To this sequence we want to apply $\pi_\star$. First note that $\pi\circ j=\id_{\nt{X}}$. Furthermore, we have a commutative diagram
\[\xymatrix{\nt{X}\times X\ar[r]^(.55)\pi\ar[d]^(.45){\psi_X}&\nt{X}\ar[d]^(.45){\psi_n}\\
\tn{X}\times X\ar[r]^(.55){p_n}&\tn{X},}
\]
where $p_n$ is the first projection as usual. Since the projections are flat we have
\[\pi_\star\circ\psi_X^\star\simeq\psi_n^\star\circ p_{n\star}.\]
Define
\[\tnmc{O}_X:=p_{n\star}\mc{O}_{\Xi_n}.\]
This is a rank $n$ vector bundle on $\tn{X}$ with determinant $\mc{O}(\delta_n)$. Thus we find $\pi_\star\psi_X^\star\mc{O}_{\Xi_n}\simeq\psi_n^\star\tnmc{O}_X$ and similarly $\pi_\star\phi_X^\star\mc{O}_{\Xi_n}\simeq\phi_n^\star\tmmc{O}_X$. Altogether we see that we have an exact sequence on $\nt{X}:$
\begin{equation*}\label{eqhigherntautseq}
0\rightarrow \mc{O}(D_n)\rightarrow \psi_n^\star\tnmc{O}_X \rightarrow \phi_n^\star\tmmc{O}_X \rightarrow 0.
\end{equation*}
Hence taking determinants yields the lemma.
\end{proof}

\begin{cor}\label{corhigherninduct}
We have
\begin{equation*}
\psi_n^\star\delta_n=[D_n]+\sigma_n^\star p_{n-1}^\star\delta_{n-1}.
\end{equation*}
\end{cor}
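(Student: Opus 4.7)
The plan is to derive this statement as an immediate corollary of the preceding lemma by taking first Chern classes. Recall that Lemma~\ref{eqhigherninductd} establishes the isomorphism
\[\psi_n^\star\mc{O}(\delta_n)\simeq\mc{O}(D_n)\otimes\sigma_n^\star p_{n-1}^\star\mc{O}(\delta_{n-1})\]
of line bundles on $\nt{X}$. Since $c_1$ is a group homomorphism $\Pic(\nt{X})\rightarrow\NS(\nt{X})$ that turns tensor products into sums, applying $c_1$ to both sides yields
\[c_1\bigl(\psi_n^\star\mc{O}(\delta_n)\bigr)=c_1\bigl(\mc{O}(D_n)\bigr)+c_1\bigl(\sigma_n^\star p_{n-1}^\star\mc{O}(\delta_{n-1})\bigr).\]

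Next I would use the naturality of Chern classes under pullback, namely $c_1\circ f^\star=f^\star\circ c_1$ for any morphism $f$ of smooth projective varieties. This allows us to simplify each term: the left-hand side becomes $\psi_n^\star c_1(\mc{O}(\delta_n))=\psi_n^\star\delta_n$, while the second term on the right becomes $\sigma_n^\star p_{n-1}^\star c_1(\mc{O}(\delta_{n-1}))=\sigma_n^\star p_{n-1}^\star\delta_{n-1}$. The middle term is, by definition of $\mc{O}(D_n)$ as the line bundle associated to the exceptional divisor of $\sigma_n$, equal to the class $[D_n]$.

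Combining these identifications yields
\[\psi_n^\star\delta_n=[D_n]+\sigma_n^\star p_{n-1}^\star\delta_{n-1},\]
which is precisely the claim. There is no real obstacle here; the entire content sits in the lemma, and the corollary is a purely formal consequence of passing from line bundles to their first Chern classes.
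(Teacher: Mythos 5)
Your proof is correct and is exactly the intended argument: the paper states this corollary with no separate proof, treating it as the immediate consequence of Lemma~\ref{eqhigherninductd} obtained by applying $c_1$, using its compatibility with pullback and the identification $c_1(\mc{O}(D_n))=[D_n]$.
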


Next, there is a recursive formula for classes in $\NS \tn{X}$ coming from $X$:

\begin{lem}\label{eqhigherninductg}
For every class $l\in\NS X$ we have
\[\psi_n^\star l_{\tn{X}} = \sigma_n^\star(p_{n-1}^\star l_{\tm{X}}+q_{n-1}^\star l).\]
\end{lem}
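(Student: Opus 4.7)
My plan is to exploit the factorisation $\rho_n \circ \psi_n = s \circ (\id_X \times \rho_{n-1}) \circ \sigma_n$, where $s \colon X \times S^{n-1} X \to S^n X$ denotes the addition morphism $(x,[\xi']) \mapsto [\xi' + x]$. Given this, the lemma reduces to an elementary pullback computation on the symmetric product. Set-theoretically the factorisation is obvious: for $(\xi',\xi) \in \nt X$ one has $\sigma_n(\xi',\xi) = (x,\xi')$ with $x = q_{n-1}\circ\sigma_n(\xi',\xi)$ the residual point of $\xi$ over $\xi'$, and $[\xi] = [\xi'] + [x]$ in $S^n X$; scheme-theoretically I would verify the equality on the dense open locus where both $\sigma_n$ and $\rho_n$ are isomorphisms, which suffices since $\nt X$ is reduced and $S^n X$ is separated.

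Using the factorisation, $\psi_n^\star l_{\tn X} = \sigma_n^\star (\id_X \times \rho_{n-1})^\star s^\star (l^{\boxplus n})^{\Sy n}$, so it remains to identify the pullback under $s$. For this I would consider the commutative square with vertical quotient maps $f_n \colon X^n \to S^n X$ and $\id_X \times f_{n-1} \colon X \times X^{n-1} \to X \times S^{n-1} X$ and horizontal identification $X^n \cong X \times X^{n-1}$. Tracing $(l^{\boxplus n})^{\Sy n}$ around the square, its pullback to $X \times X^{n-1}$ is the $\Sy{n-1}$-invariant class $l \boxplus l^{\boxplus(n-1)}$, and injectivity of the pullback on $\NS$ to invariants under a finite quotient then forces
\[s^\star (l^{\boxplus n})^{\Sy n} = l \boxplus (l^{\boxplus(n-1)})^{\Sy{n-1}}.\]
Pulling this back via $\id_X \times \rho_{n-1}$ yields $q_{n-1}^\star l + p_{n-1}^\star l_{\tm X}$ on $X \times \tm X$, directly from the definition of $l_{\tm X}$, and a final $\sigma_n^\star$ produces the claim.

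The main technical point I anticipate is the clean scheme-theoretic verification of the initial factorisation; everything afterwards is formal manipulation of pullbacks. As an alternative, closer in spirit to Lemma \ref{eqhigherninductd}, one could tensor the EGL sequence (6) of \cite{EGL01} with the line bundle $\mc L$ of class $l$, apply $\pi_\star$, and take determinants of the resulting sequence
\[0 \to \mc O(D_n) \otimes \sigma_n^\star q_{n-1}^\star \mc L \to \psi_n^\star \tnmc L \to \phi_n^\star \tmmc L \to 0\]
on $\nt X$, combining with Corollary \ref{corhigherninduct}; but that route would require the separate auxiliary identity $c_1(\tnmc L) = l_{\tn X} + \delta_n$, which itself demands an independent argument and makes the symmetric-product approach preferable here.
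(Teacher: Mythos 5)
Your proof is correct and follows essentially the same route as the paper: both rest on the factorisation $\rho_n\circ\psi_n = f_n\circ(\rho_{n-1}\times\id_X)\circ\sigma_n$ through the degree-$n$ covering $f_n\colon S^{n-1}X\times X\to S^nX$ and the identity $f_n^\star(l^{\boxplus n})^{\Sy n} = (l^{\boxplus(n-1)})^{\Sy{n-1}}\boxplus l$, which the paper asserts directly and you justify by pulling back further to $X^n$. Your version is marginally more careful about the scheme-theoretic factorisation and the descent of the class along the finite quotient, but the argument is the same.
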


\begin{proof} Denote the natural degree $n$ covering $S^{n-1}\times X\rightarrow S^nX$ by $f_n$. We have 
\begin{align*}
\begin{array}{lclclr}
\psi_n^\star l_{\tn{X}} &=& \psi_n^\star\rho_n^\star (l^{\boxplus n})^{\Sy{n}} &=& \sigma_n^\star(\rho_{n-1}\times \id_X)^\star f_n^\star(l^{\boxplus n})^{\Sy{n}}&\\
&=&\sigma_n^\star(\rho_{n-1}\times \id_X)^\star((l^{\boxplus n-1})^{\Sy{n-1}}\boxplus l)& =& \sigma_n^\star(\rho_{n-1}^\star(l^{\boxplus n-1})^{\Sy{n-1}}\boxplus l)&\\
&=& \sigma_n^\star(p_{n-1}^\star l_{\tm{X}}+q_{n-1}^\star l).&&&\hspace{20pt}\qedhere
\end{array}
\end{align*}
\end{proof}

\begin{rem}\label{remgeomhilbn}
We leave it to the reader to formulate and prove the analogous result to the lemma above for line bundles instead of cohomology classes.
\end{rem}

If $X$ is regular (i.\ e.\ $h^1(X,\mc{O}_X)=0$), we have
\[\NS\tn{X}\cong \NS X \oplus \ZZ\delta_n.\]

Finally, let us also consider the case that our surface is an abelian surface $A$. We have the group law $s_n\colon A^n\rightarrow A$ which factors through $\tilde{s}_n\colon S^nA\rightarrow A$ and we denote the composition $\tilde{s}_n\circ\rho_n$ by $m_n$:
\[\xymatrix{
\tn{A}\ar[dr]_{m_n}\ar[r]^{\rho_n}&S^nA\ar[d]^(.4){\tilde{s}_n}&A^n\ar[l]\ar[dl]^{s_n}\\
&A
}\]
Thus we have embeddings
\begin{eqnarray}
(-)_{M_n}\colon\NS A\hookrightarrow \NS A^n,\quad l\mapsto s_n^\star l-l^{\boxplus n}\quad\text{and}\label{eqgeomhilbnmn}\\[4pt]
(-)_{m_n}\colon\NS A\hookrightarrow\NS\tn{A},\quad l\mapsto m_n^\star l-l_{\tn{X}}.\notag
\end{eqnarray}

\begin{rem}\label{lemgeomhilb2linind}
As one can easily see from the K\"unneth decomposition, the class $l_{M_n}$ is linearly independent of the summand $(\NS A)^{\boxplus n}$ inside $\NS A^n.$
\end{rem}

Finally, let us briefly introduce the generalised Kummer varieties. If one mimics the construction of Hilbert schemes to the case of abelian surfaces, one again obtains Ricci flat manifolds. But they are not simply connected and contain additional factors in the Beauville$-$Bogomolov decomposition. To get rid of these factors we consider (for an abelian surface $A$) the fibre
\[K_n(A):=m_n^{-1}(0)\]
and call it \em generalised Kummer variety. \em It is a $(2n-2)$-dimensional irreducible symplectic manifold (cf.\ \cite{Beau83}). In the case $n=2$ this just gives the Kummer surface $\Km{A}$. 

Note that some authors (e.g.\ \cite{Beau83}) use the notation $K_n$ for the generalised Kummer variety of dimension $2n$. We will use the notation introduced above which is also used in \cite{Huy99}.

\subsection{Tautological Sheaves}\label{subsectiontaut}
Let us give the definition of tautological sheaves, the objects of main interest in this article. Fix a sheaf $\mc{F}$ on $X$ and recall that there is the universal subscheme $\Xi_n\subset X\times\tn{X}.$ Furthermore we have the two projections $p_n\colon X\times\tn{X}\rightarrow \tn{X}$ and $q_n\colon X\times\tn{X}\rightarrow X$.

\begin{defi}
The \em tautological sheaf associated with $\mc{F}$ \em is defined as
\[\tnmc{F}:=p_{n\star}(q_n^\star\mc{F}\otimes \mc{O}_{\Xi_n}).\]
\end{defi}

\begin{rem}\label{remtautdefres}
Very important for the study of tautological sheaves is the following observation: The universal subscheme $\Xi_n$ and the nested Hilbert scheme $\nt{X}$ are isomorphic outside codimension four subschemes (cf.\ Section \ref{subsectiongeom}). Let $U$ denote the open subset where they are actually isomorphic. The restrictions of $q_n^\star\mc{F}$ and $\sigma_n^\star q_{n-1}^\star\mc{F}$ to $U$ are naturally isomorphic. Thus the restriction of $\tnmc{F}$ to the image $p_n(U)$ in $\tn{X}$ is isomorphic to $\widetilde{\tnmc{F}}:=\psi_{n\star}\sigma_n^\star q_{n-1}^\star\mc{F}$ (restricted to $\psi_n(U)=p_n(U)$). Hence we can use $\widetilde{\tnmc{F}}$ instead of $\tnmc{F}$ as long as we want to study properties that are not sensible with respect to modifications in codimension four. In the case $n=2$ we, in fact, have $\widetilde{\tamc{F}}\simeq\tamc{F}$.
\end{rem}

The restriction of $p_n$ to $\Xi_n$ is a flat covering of degree $n$. Hence the following lemma:

\begin{lem}
If $\mc{F}$ is locally free (torsion-free, resp.), so is $\tnmc{F}.$ If $\mc{F}$ has rank $r,$ then $\tnmc{F}$ has rank $nr.$
\end{lem}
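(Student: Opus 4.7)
The proof hinges on the geometric fact recalled just before the statement: the morphism $p_n|_{\Xi_n}\colon \Xi_n \to \tn{X}$ is finite and flat of degree $n$, so in particular $p_{n\star}\mc{O}_{\Xi_n}$ is a locally free $\mc{O}_{\tn{X}}$-module of rank $n$. The plan is to push both assertions (local freeness or torsion-freeness, and the rank formula) through the definition $\tnmc{F}=p_{n\star}(q_n^\star\mc{F}\otimes\mc{O}_{\Xi_n})$ using only this fact.

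First I would dispose of the locally free case. If $\mc{F}$ is locally free of rank $r$, then $q_n^\star\mc{F}$ is locally free on $X\times\tn{X}$; restriction to $\Xi_n$ preserves local freeness, so $q_n^\star\mc{F}\otimes\mc{O}_{\Xi_n}$ is locally free of rank $r$ on $\Xi_n$. Pushing forward by a finite flat morphism of degree $n$ turns a locally free rank $r$ sheaf into a locally free rank $nr$ sheaf: locally this is the elementary statement that a free $B$-module of rank $r$, where $B$ is free of rank $n$ over $A$, is a free $A$-module of rank $nr$. This also yields the rank formula in general, since on the dense open complement of the non-locally-free locus of $\mc{F}$ the same argument makes $\tnmc{F}$ locally free of rank $nr$.

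For the torsion-free case I would reduce to the locally free one via the reflexive hull. Since $X$ is a smooth surface, $\mc{E}:=\mc{F}^{\vee\vee}$ is locally free, and the embedding $\mc{F}\hookrightarrow\mc{E}$ has torsion cokernel $\mc{Q}$ supported at finitely many points of $X$. Applying $q_n^\star$ (exact, since $q_n$ is a flat projection), then $\otimes\mc{O}_{\Xi_n}$, and finally the exact functor $p_{n\star}$ to $0\to\mc{F}\to\mc{E}\to\mc{Q}\to 0$, one obtains
\[
p_{n\star}\mathrm{Tor}_1^{\mc{O}_{X\times\tn{X}}}(q_n^\star\mc{Q},\mc{O}_{\Xi_n})\,\longrightarrow\, \tnmc{F}\,\longrightarrow\, \tnmc{E}\,\longrightarrow\, \tnmc{Q}\,\longrightarrow\, 0.
\]
By the first step, $\tnmc{E}$ is locally free and hence torsion-free. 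The $\mathrm{Tor}_1$ term is supported above $\supp\mc{Q}\times\tn{X}$, whose image in $\tn{X}$ under $p_n$ is the codimension-two locus of subschemes meeting $\supp\mc{Q}$; outside this locus $\mc{F}=\mc{E}$ and the map $\tnmc{F}\to\tnmc{E}$ is an isomorphism, so its kernel is a torsion subsheaf of $\tnmc{F}$ supported in codimension two. To conclude that the kernel vanishes I would argue on stalks, using that $\Xi_n$ is Cohen--Macaulay (being flat over the smooth scheme $\tn{X}$) and that $q_n^\star\mc{F}\otimes\mc{O}_{\Xi_n}$ is locally free in codimension one on $\Xi_n$, so the finite flat pushforward by $p_n$ introduces no $\mc{O}_{\tn{X}}$-torsion.

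The main obstacle is this last step: the locally free case and the rank formula are purely mechanical, whereas ruling out the $\mathrm{Tor}_1$ contribution in the torsion-free case requires the depth estimates just indicated. Alternatively one can work with a finite locally free resolution $0\to\mc{E}_1\to\mc{E}_0\to\mc{F}\to 0$ on the smooth surface $X$ and analyse the image of $\tn{\mc{E}_1}\to\tn{\mc{E}_0}$ inside the locally free sheaf $\tn{\mc{E}_0}$, in which $\tnmc{F}$ is then a quotient whose purity has to be checked directly.
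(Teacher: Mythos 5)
Your argument for the locally free case and for the rank formula is correct and essentially mechanical, as you say: $q_n^\star\mc{F}\otimes\mc{O}_{\Xi_n}$ is locally free of rank $r$ on $\Xi_n$, and the pushforward along the finite locally free degree $n$ map $p_n|_{\Xi_n}$ is locally free of rank $nr$; the rank statement for torsion-free $\mc{F}$ follows by working over the open locus of $\xi$ disjoint from the finitely many points where $\mc{F}$ is not locally free. (The paper itself offers no argument here — it only cites Scala — so there is nothing to compare against on that front.)

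The torsion-free case, however, has a genuine gap at exactly the step you flag, and the justification you sketch for it does not work. Since $\mathrm{Tor}_1^{\mc{O}_{X\times\tn{X}}}(q_n^\star\mc{E},\mc{O}_{\Xi_n})=0$, the sheaf $T:=\mathrm{Tor}_1^{\mc{O}_{X\times\tn{X}}}(q_n^\star\mc{Q},\mc{O}_{\Xi_n})$ injects into $q_n^\star\mc{F}\otimes\mc{O}_{\Xi_n}$ and is precisely its torsion subsheaf (the quotient embeds into the locally free $q_n^\star\mc{E}\otimes\mc{O}_{\Xi_n}$ on the integral scheme $\Xi_n$, while $T$ is supported on the proper closed subset $q_n^{-1}(\supp\mc{Q})\cap\Xi_n$). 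Hence torsion-freeness of $\tnmc{F}$ is \emph{equivalent} to $T=0$; there is no arguing around this term. Your stated reason — that the sheaf is locally free in codimension one on $\Xi_n$, so the pushforward \emph{introduces} no torsion — is a non sequitur: a sheaf can be locally free in codimension one and still carry torsion in codimension two (e.g.\ $\mc{O}_Z\oplus\mc{O}$ for $Z$ of codimension two), and $p_{n\star}$ faithfully transports any such torsion to $\tn{X}$. The missing ingredient is the flatness of the \emph{other} projection $q_n|_{\Xi_n}\colon\Xi_n\rightarrow X$: its fibres $\{\xi\mid x\in\xi\}$ all have dimension $2n-2$, and $\Xi_n$ is Cohen--Macaulay (being finite flat over the smooth $\tn{X}$), so miracle flatness applies over the smooth base $X$. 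Flatness makes the pullback to $\Xi_n$ of a length-two locally free resolution of $\mc{Q}$ exact, so $T=0$; equivalently and more directly, $(q_n|_{\Xi_n})^\star\mc{F}$ is then the flat pullback of a torsion-free sheaf to the integral scheme $\Xi_n$, hence torsion-free, and the pushforward of a torsion-free sheaf along the finite surjection $p_n$ of integral schemes is torsion-free. With this one fact inserted your argument closes; the Cohen--Macaulayness of $\Xi_n$ alone does not suffice.
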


\begin{proof} \cite[Rem.\ 2.5 and Lem.\ 2.23]{Scal09b}.\end{proof}

\begin{lem}\label{lemtautdual}
Let $\mc{F}$ be a locally free sheaf on $X.$ Then
\begin{equation*}
(\tnmc{F})^\vee \simeq \tn{(\mc{F}^\vee)}\otimes \mc{O}(\delta_n).
\end{equation*}
\end{lem}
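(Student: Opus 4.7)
The plan is to apply relative duality for the finite flat degree $n$ covering $p_n\colon\Xi_n\rightarrow\tn{X}$. Recall from Section \ref{subsectiongeom} that the relative canonical sheaf is $\omega_{p_n}\simeq p_n^\star\mc{O}(\delta_n);$ in particular it is invertible, so $p_n$ is Gorenstein. Since $\mc{F}$ is locally free on $X,$ the pullback $q_n^\star\mc{F}$ is locally free on $\Xi_n,$ and by definition $\tnmc{F}\simeq p_{n\star}(q_n^\star\mc{F}).$

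For any finite flat Gorenstein morphism $p\colon Y\rightarrow Z$ and any locally free sheaf $\mc{G}$ on $Y$ there is a natural isomorphism of $\mc{O}_Z$-modules
\[(p_\star\mc{G})^\vee \simeq p_\star\bigl(\mc{G}^\vee\otimes\omega_p\bigr),\]
which is the standard consequence of Grothendieck duality (the identification $p^!\mc{O}_Z\simeq\omega_p$ combined with $\mc{H}om$-$\otimes$ adjunction, using that $p_\star$ is exact). Applied to $\mc{G}=q_n^\star\mc{F}$ this yields
\[(\tnmc{F})^\vee\simeq p_{n\star}\bigl(q_n^\star\mc{F}^\vee\otimes p_n^\star\mc{O}(\delta_n)\bigr).\]

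It remains to extract the twist from the pushforward. The projection formula gives
\[p_{n\star}\bigl(q_n^\star\mc{F}^\vee\otimes p_n^\star\mc{O}(\delta_n)\bigr)\simeq p_{n\star}(q_n^\star\mc{F}^\vee)\otimes\mc{O}(\delta_n)=\tn{(\mc{F}^\vee)}\otimes\mc{O}(\delta_n),\]
which is the desired isomorphism.

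The main obstacle is correctly invoking relative duality for the (possibly singular) scheme $\Xi_n$; the critical inputs are the Gorenstein property of $p_n,$ which is guaranteed by the invertibility of $\omega_{p_n},$ and the local freeness hypothesis on $\mc{F},$ which ensures that $q_n^\star\mc{F}$ is locally free on $\Xi_n$ so that $\mc{R}\mc{H}om$ reduces to the ordinary dual $q_n^\star\mc{F}^\vee$ and no higher direct images of $p_n$ appear.
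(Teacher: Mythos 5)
Your proposal is correct and follows essentially the same route as the paper, which also invokes Grothendieck--Verdier duality for the finite flat covering $p_n$ with $\omega_{p_n}\simeq p_n^\star\mc{O}(\delta_n)$, uses local freeness of $q_n^\star\mc{F}$ to turn the relative $\mc{H}om$ into a tensor product with the dual, and concludes by the projection formula. Your additional remarks on the Gorenstein property and the vanishing of higher direct images only make explicit what the paper leaves implicit.
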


\begin{proof} Recall that $p_n^\star\mc{O}(\delta_n)$ is the relative canonical sheaf of the degree $n$ covering $p_n$. Using Grothendieck$-$Verdier duality, we have
\[\begin{array}{lclclr}
(\tnmc{F})^\vee &=& \mc{H}om_{\mc{O}_{\tn{X}}}(p_{n\star}q_n^\star\mc{F}, \mc{O}_{\tn{X}})  &\simeq &
p_{n\star}\mc{H}om_{\mc{O}_{\Xi_n}}(q_n^\star\mc{F},p_n^\star\mc{O}(\delta_n))\\[7pt]
&\simeq & p_{n\star}(q_n^\star\mc{F}^\vee\otimes p_n^\star\mc{O}(\delta_n))
&\simeq&\tn{(\mc{F}^\vee)}\otimes \mc{O}(\delta_n).&\qedhere
\end{array}\]
\end{proof}

\begin{lem}\label{lemtautc1}
We have the following formula for the first Chern class of $\tnmc{F}$:
\[c_1(\tnmc{F})=c_1(\mc{F})_{\tn{X}}-\rk(\mc{F})\delta_n.\]
\end{lem}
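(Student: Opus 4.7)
My plan is to prove the formula by induction on $n$, paralleling the approach behind Lemma \ref{eqhigherninductd}. The base case $n=1$ is immediate: $\Xi_1=X$ and $p_1=\id_X$, so the tautological construction reduces to $\mc{F}$ itself, while $\delta_1=0$, and both sides of the claimed formula equal $c_1(\mc{F})$.

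Before the inductive step I would first reduce to locally free $\mc{F}$. Since $p_n$ is finite flat and $q_n^\star \mc{F}\otimes\mc{O}_{\Xi_n}$ is $\tn{X}$-flat when $\mc{F}$ is locally free, the functor $\mc{F}\mapsto\tnmc{F}$ is exact on vector bundles, so $c_1(\tnmc{F})$ is additive on short exact sequences of locally free sheaves; choosing a two-term locally free resolution then reduces the general case. For the inductive step I insert $\mc{F}$ into the geometric setup of Lemma \ref{eqhigherninductd}: on $\nt{X}\times X$ I tensor the EGL exact sequence
\[0 \to j_\star \mc{O}(D_n) \to \psi_X^\star \mc{O}_{\Xi_n} \to \phi_X^\star \mc{O}_{\Xi_{n-1}} \to 0\]
by $\pi_2^\star\mc{F}$ (where $\pi_2$ is the projection to the $X$-factor) and apply $\pi_\star$. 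Using $\pi\circ j=\id$ with the projection formula (and $\pi_2\circ j = q_{n-1}\circ\sigma_n$) together with the flat base change identities $\pi_\star\psi_X^\star=\psi_n^\star p_{n\star}$ and $\pi_\star\phi_X^\star=\phi_n^\star p_{n-1\,\star}$ already used in the proof of Lemma \ref{eqhigherninductd}, this yields an exact sequence on $\nt{X}$ of the form
\[0 \to \mc{O}(\epsilon D_n)\otimes\sigma_n^\star q_{n-1}^\star \mc{F} \to \psi_n^\star \tnmc{F} \to \phi_n^\star \tmmc{F} \to 0,\]
with $\epsilon\in\{+1,-1\}$ to be fixed by a sign check below.

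Taking $c_1$, substituting the induction hypothesis $c_1(\tmmc{F}) = c_1(\mc{F})_{\tm{X}} - r\delta_{n-1}$, and then rewriting via Lemma \ref{eqhigherninductg} (to recognise $\sigma_n^\star(p_{n-1}^\star c_1(\mc{F})_{\tm{X}} + q_{n-1}^\star c_1(\mc{F})) = \psi_n^\star c_1(\mc{F})_{\tn{X}}$) and Corollary \ref{corhigherninduct} (to replace $\sigma_n^\star p_{n-1}^\star \delta_{n-1}$ by $\psi_n^\star \delta_n - [D_n]$), the $[D_n]$-contributions combine into a $(\epsilon r + r)[D_n]$ term; choosing $\epsilon=-1$ makes them cancel and leaves
\[\psi_n^\star c_1(\tnmc{F}) = \psi_n^\star\bigl(c_1(\mc{F})_{\tn{X}} - r\delta_n\bigr).\]
To descend from $\nt{X}$ to $\tn{X}$ I would use that $\psi_n$ is surjective and generically finite of degree $n$, so the projection formula gives $\psi_{n\star}\psi_n^\star = n\cdot\id$ on $H^2(\tn{X},\QQ)$; rational injectivity of $\psi_n^\star$ then promotes to integral injectivity on $\NS\tn{X}$ by torsion-freeness, which yields the formula.

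The main obstacle I foresee is pinning down the sign of $D_n$ in the exact sequence above. The $r[D_n]$ contribution coming from $c_1(\mc{O}(\epsilon D_n)\otimes\sigma_n^\star q_{n-1}^\star \mc{F})$ has to cancel exactly against the $r[D_n]$ produced when rewriting $\sigma_n^\star p_{n-1}^\star\delta_{n-1}$ through Corollary \ref{corhigherninduct}; if the wrong sign is used they instead reinforce into a spurious $2r[D_n]$. A clean sanity check fixes the sign: the predicted $c_1(\tnmc{O}_X) = -\delta_n$ for $\mc{F}=\mc{O}_X$ agrees with the direct computation for $n=2$ from the double cover $p_2\colon\Xi_2\to\ta{X}$ (branch divisor $2\delta_2$, hence $p_{2\star}\mc{O}_{\Xi_2}\simeq\mc{O}_{\ta{X}}\oplus\mc{O}(-\delta_2)$), and this forces $\epsilon=-1$ consistently with Corollary \ref{corhigherninduct}.
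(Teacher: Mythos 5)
Your argument is correct, but it follows a genuinely different route from the paper's. The paper proves the formula in a single stroke by applying Grothendieck--Riemann--Roch to the finite flat degree-$n$ covering $p_n\colon\Xi_n\rightarrow\tn{X}$: the degree-one part of $p_{n\star}\big(q_n^\star\ch(\mc{F})\cdot\td_{p_n}\big)$ is $p_{n\star}q_n^\star c_1(\mc{F})-\tfrac{r}{2}\,p_{n\star}c_1(\omega_{p_n})=c_1(\mc{F})_{\tn{X}}-r\delta_n$, the geometric input being that the pushforward of the ramification class of $p_n$ equals $2\delta_n$, plus the identification $p_{n\star}q_n^\star c_1(\mc{F})=c_1(\mc{F})_{\tn{X}}$ via the Hilbert--Chow morphism. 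Your induction through the nested Hilbert scheme replaces that input by the recursions of Corollary \ref{corhigherninduct} and Lemma \ref{eqhigherninductg} applied to the EGL sequence tensored with $\pi_2^\star\mc{F}$; the bookkeeping is exactly as you describe, and the descent from $\nt{X}$ to $\tn{X}$ is fine since $\psi_n^\star$ is injective on $\cH^2(\tn{X},\QQ)$. As for what each approach buys: GRR in principle yields the whole Chern character at once, but it requires pinning down $\td_{p_n}$, which is delicate for $n\geq3$ because the relative canonical sheaf of $p_n$ is then the ramification divisor and not the full pullback $p_n^\star\mc{O}(\delta_n)$ (only its pushforward is $2\delta_n$); your induction sidesteps this entirely, at the cost of having to fix the sign of $D_n$ in the EGL sequence. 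Your sanity check does this correctly and is in fact essential: $\epsilon=-1$ is the normalisation forced by $p_{2\star}\mc{O}_{\Xi_2}\simeq\mc{O}\oplus\mc{O}(-\delta_2)$, i.e.\ by $\det\tnmc{O}_X\simeq\mc{O}(-\delta_n)$, and it is the one compatible with Corollary \ref{corhigherninduct}. The only cosmetic point is the reduction to the locally free case: on a surface a coherent sheaf may need a length-two (three-term) locally free resolution rather than a two-term one, but additivity of $c_1$ under the exact functor $\mc{F}\mapsto\tnmc{F}$ on vector bundles extends to such resolutions without change.
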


\begin{proof} The map $p_n\colon \Xi_n\rightarrow \tn{X}$ is a flat covering of degree $n$ with branch divisor $\delta_n$. Thus the class of the relative canonical bundle of $p_n$ is $p_n^\star\delta_n$. Hence the Grothendieck$-$Riemann$-$Roch theorem reads
\begin{eqnarray*}
\begin{array}{lclcl}
\ch(\tnmc{F})&=&\ch(p_{n\star}q_n^\star\mc{F})&=&p_{n\star}(q_n^\star\ch(\mc{F})\cdot\td_{p_n})\\[4pt]
&&&=&
p_{n\star}((\rk(\mc{F}),q_n^\star c_1(\mc{F}),\dots)(1,-\frac{1}{2}p_n^\star\delta_n,\dots))\\[4pt]
&&&=&p_{n\star}(\rk(\mc{F}),q_n^\star c_1(\mc{F})-\frac{1}{2}\rk(\mc{F})p_n^\star\delta_n,\dots)\\[4pt]
&&&=&(n\rk(\mc{F}),p_{n\star}q_n^\star c_1(\mc{F})-\rk(\mc{F})\delta_n,\dots).
\end{array}
\end{eqnarray*}
Note that --- as in the $n=2$ case --- we have $p_{n\star}p_n^\star\delta_n=2\delta_n$ because along the divisor $2\delta_n$ two sheets of the degree $n$ covering come together.

Certainly the first Chern class is independent of modifications in codimenion four, i.e.\ $p_{n\star}q_n^\star c_1(\mc{F})=\psi_{n\star}\sigma_n^\star q_{n-1}^\star c_1(\mc{F}).$ Denote by $f_n\colon S^{n-1}X\times X\rightarrow S^nX$ the degree $n$ covering and by $\mathrm{pr}_2\colon S^{n-1}X\times X\rightarrow X$ the second projection. We have
$\psi_{n\star}\sigma_n^\star q_{n-1}^\star c_1(\mc{F})=\rho_n^\star f_{n\star}\mathrm{pr}_2^\star c_1(\mc{F})=c_1(\mc{F})_{\tn{X}}.$ \end{proof}

Next, we want to summarise the results of Scala and Krug about global sections and extensions of tautological sheaves. These formulas turn out to be a powerful tool to analyse stability and deformations of these sheaves.

\begin{thm}\label{thmtautcohextcoh}
For every sheaf $\mc{F}$ and every line bundle $\mc{L}$ on $X$ we have
\begin{equation*}\label{eqtautcoh}
\cH^\ast(\tn{X},\tnmc{F}\otimes\mc{L}_{\tn{X}})\cong \cH^\ast(X,\mc{F}\otimes\mc{L})\otimes S^{n-1} \cH^\ast(X,\mc{L}).
\end{equation*}
\end{thm}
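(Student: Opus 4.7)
\emph{Plan.} The proof proceeds by induction on $n$, run in parallel with the auxiliary identity
\[\cH^\ast(\tn{X},\mc{L}_{\tn{X}})\cong S^n\cH^\ast(X,\mc{L}),\]
which I would establish first for every $n\geq 1$. Since $S^nX$ has rational singularities and $\rho_n$ is a resolution, $R\rho_{n\star}\mc{O}_{\tn{X}}\simeq\mc{O}_{S^nX}$. Combined with $\mc{L}_{\tn{X}}=\rho_n^\star\mc{M}$ for the descended line bundle $\mc{M}=(\mu_\star\mc{L}^{\boxtimes n})^{\Sy{n}}$ on $S^nX$ and the projection formula, this gives $\cH^\ast(\tn{X},\mc{L}_{\tn{X}})\cong\cH^\ast(S^nX,\mc{M})$. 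Finiteness of the quotient $\mu\colon X^n\to S^nX$ identifies this with $\cH^\ast(X^n,\mc{L}^{\boxtimes n})^{\Sy{n}}$, and K\"unneth yields the graded symmetric power $S^n\cH^\ast(X,\mc{L})$.

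For the main inductive step, the strategy is to transfer the computation to the nested Hilbert scheme $\nt{X}$ via the diagram in Section \ref{subsectiongeom} and then push further down to $X\times\tm{X}$. The first, and most delicate, step is to upgrade the generic identification $\widetilde{\tnmc{F}}\simeq\tnmc{F}$ of Remark \ref{remtautdefres} to a derived-category statement
\[R\psi_{n\star}\bigl(\sigma_n^\star q_{n-1}^\star\mc{F}\bigr)\simeq\tnmc{F}.\]
Granting this, the projection formula gives
\[\cH^\ast(\tn{X},\tnmc{F}\otimes\mc{L}_{\tn{X}})\cong\cH^\ast\bigl(\nt{X},\,\sigma_n^\star q_{n-1}^\star\mc{F}\otimes\psi_n^\star\mc{L}_{\tn{X}}\bigr).\]
The line bundle analogue of Lemma \ref{eqhigherninductg} (cf.\ Remark \ref{remgeomhilbn}) rewrites $\psi_n^\star\mc{L}_{\tn{X}}\simeq\sigma_n^\star\bigl(p_{n-1}^\star\mc{L}_{\tm{X}}\otimes q_{n-1}^\star\mc{L}\bigr)$, and since $\sigma_n$ is a blowup with $R\sigma_{n\star}\mc{O}_{\nt{X}}\simeq\mc{O}_{X\times\tm{X}}$, another application of the projection formula reduces the cohomology to
\[\cH^\ast\bigl(X\times\tm{X},\,q_{n-1}^\star(\mc{F}\otimes\mc{L})\otimes p_{n-1}^\star\mc{L}_{\tm{X}}\bigr).\]
By K\"unneth this splits as $\cH^\ast(X,\mc{F}\otimes\mc{L})\otimes\cH^\ast(\tm{X},\mc{L}_{\tm{X}})$, and applying the auxiliary identity at level $n-1$ to the second factor closes the induction.

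\emph{Main obstacle.} Verifying $R\psi_{n\star}\sigma_n^\star q_{n-1}^\star\mc{F}\simeq\tnmc{F}$ for arbitrary coherent $\mc{F}$ is the technical heart of the argument. The identity is manifest on the open locus where $w_n\colon\nt{X}\to\Xi_n$ is an isomorphism, so the content is to rule out any contribution from the codimension-four exceptional locus---both a possible discrepancy between $\psi_{n\star}\sigma_n^\star q_{n-1}^\star\mc{F}$ and $\tnmc{F}$ supported there, and the vanishing of $R^i\psi_{n\star}\sigma_n^\star q_{n-1}^\star\mc{F}$ for $i>0$. This requires a local analysis of $\nt{X}$ near the non-flat locus of $\psi_n$ and is precisely what is carried out in Scala's thesis (for the locally free case) and in Krug's subsequent work (for arbitrary coherent $\mc{F}$).
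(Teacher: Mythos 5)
The paper does not actually prove this statement: it is quoted from the literature, and the proofs it points to (Scala's Cor.\ 4.5 and Krug's Thm.\ 6.17) proceed via the Bridgeland--King--Reid equivalence $D^b(\tn{X})\simeq D^b_{\Sy{n}}(X^n)$, computing the image of $\tnmc{F}$ as an explicit $\Sy{n}$-equivariant complex on $X^n$ and then taking invariant cohomology. Your route through the nested Hilbert scheme $\nt{X}$ is therefore genuinely different from the one the paper relies on. Your auxiliary identity $\cH^\ast(\tn{X},\mc{L}_{\tn{X}})\cong S^n\cH^\ast(X,\mc{L})$ is argued correctly (rational singularities of $S^nX$, finiteness of $X^n\to S^nX$, K\"unneth), and granting your two pushforward identities the rest of the reduction is clean and does recover the stated formula.

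The gap is that the argument hinges on two derived-pushforward statements, only one of which you flag, and neither of which is actually available where you claim. First, $R\sigma_{n\star}\mc{O}_{\nt{X}}\simeq\mc{O}_{X\times\tm{X}}$ is not automatic ``since $\sigma_n$ is a blowup'': the centre $\Xi_{n-1}$ is a codimension-two Cohen--Macaulay subscheme that is \emph{not} a local complete intersection for $n-1\geq 3$, so the standard vanishing for blowups along regularly embedded centres does not apply; this needs a separate argument. Second, the identity $R\psi_{n\star}\sigma_n^\star q_{n-1}^\star\mc{F}\simeq\tnmc{F}$ amounts (via $q_n\circ w_n=q_{n-1}\circ\sigma_n$ and finiteness of $p_n$) to $Rw_{n\star}\mc{O}_{\nt{X}}\simeq\mc{O}_{\Xi_n}$, i.e.\ to $\Xi_n$ having rational singularities with $w_n$ as a resolution. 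That is a nontrivial statement about the singularities of the universal family, and it is not ``precisely what is carried out'' in Scala's or Krug's work --- they avoid $\nt{X}$ and $\Xi_n$'s resolution entirely by working equivariantly on $X^n$. So the technical heart of your proof is neither established nor correctly located in the literature; as written the argument is a plausible strategy rather than a proof.
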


\begin{proof} \cite[Cor.\ 4.5]{Scal09b}, \cite[Thm.\ 6.17]{Kru11}.\end{proof}

We continue by stating Krug's formula for the extension groups of tautological sheaves:

\begin{thm}
Let $\mc{F}$ and $\mc{E}$ be sheaves and $\mc{L}$ and $\mc{M}$ be line bundles on $X.$ We have
\begin{eqnarray}\label{eqtautextg}
\Ext_{\tn{X}}^\ast(\tnmc{E}\otimes\mc{L}_{\tn{X}},\tnmc{F}\otimes\mc{M}_{\tn{X}}) \hspace{5pt}\cong
\begin{array}{c}\\
\Ext_X^\ast(\mc{E}\otimes\mc{L},\mc{F}\otimes\mc{M})\otimes S^{n-1}\Ext_X^\ast(\mc{L},\mc{M})\\\bigoplus\\
\Ext_X^\ast(\mc{E}\otimes\mc{L},\mc{M})\otimes\Ext_X^\ast(\mc{L},\mc{F}\otimes\mc{M})\otimes\\ S^{n-2}\Ext_X^\ast(\mc{L},\mc{M}).
\end{array}
\end{eqnarray}
\end{thm}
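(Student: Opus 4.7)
The plan is to transport the computation from $\tn{X}$ to the $\Sy{n}$-equivariant derived category of $X^n$ via the Bridgeland--King--Reid--Haiman derived equivalence $\Phi\colon D^b(\tn{X})\xrightarrow{\sim}D^b_{\Sy{n}}(X^n)$. Under this equivalence Ext groups on $\tn{X}$ become the $\Sy{n}$-invariant parts of ordinary Ext groups on $X^n$, where the Künneth formula is available, and the tautological twist behaves well: the line bundle $\mc{M}_{\tn{X}}$ corresponds to $\mc{M}^{\boxtimes n}$ with its natural permutation linearisation, so $\Phi(\tnmc{F}\otimes\mc{M}_{\tn{X}})\simeq\Phi(\tnmc{F})\otimes\mc{M}^{\boxtimes n}$. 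Theorem \ref{thmtautcohextcoh} is the special case where the left-hand sheaf is $\mc{O}_{\tn{X}}$ rather than a tautological sheaf; it is precisely the presence of a tautological sheaf on \emph{both} sides that forces the second summand to appear.

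The key technical input is Scala's resolution of $\Phi(\tnmc{F})$: a complex $\mc{C}^\bullet_\mc{F}\in D^b_{\Sy{n}}(X^n)$ whose zeroth term is $\mc{C}^0_\mc{F}=\bigoplus_{i=1}^n\mathrm{pr}_i^\star\mc{F}$ with the natural permutation $\Sy{n}$-action, and whose higher terms are direct sums of pushforwards along partial diagonals of $X^n$. The problem is thereby reduced to computing the $\Sy{n}$-invariant part of
\[\mathrm{RHom}^\ast_{X^n}\bigl(\mc{C}^\bullet_\mc{E}\otimes\mc{L}^{\boxtimes n},\;\mc{C}^\bullet_\mc{F}\otimes\mc{M}^{\boxtimes n}\bigr).\]

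I would first extract the $(0,0)$-bidegree of this bicomplex. It splits via Künneth into a direct sum over ordered pairs $(i,j)\in\{1,\dots,n\}^2$. For the diagonal pairs $i=j$ the distinguished factor contributes $\Ext_X^\ast(\mc{E}\otimes\mc{L},\mc{F}\otimes\mc{M})$ and each of the remaining $n-1$ factors contributes $\Ext_X^\ast(\mc{L},\mc{M})$; summing over $i$ and taking $\Sy{n}$-invariants produces the first summand, with the residual $\Sy{n-1}$-action on the free factors collapsing their tensor power into the symmetric power $S^{n-1}\Ext_X^\ast(\mc{L},\mc{M})$. For the off-diagonal pairs $i\neq j$ one obtains $\Ext_X^\ast(\mc{E}\otimes\mc{L},\mc{M})$ on factor $i$, $\Ext_X^\ast(\mc{L},\mc{F}\otimes\mc{M})$ on factor $j$, and $\Ext_X^\ast(\mc{L},\mc{M})$ on each of the remaining $n-2$ factors, which after invariants is exactly the second summand with its $S^{n-2}$.

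The main obstacle is then to show that no further contributions survive: the bidegrees of the bicomplex in which at least one of $\mc{C}^k_\mc{E}$ or $\mc{C}^k_\mc{F}$ has $k\geq 1$ must either cancel via the differentials of Scala's complex (yielding exact pieces of the corresponding spectral sequence) or sit in non-trivial isotypic components of the $\Sy{n}$-action and hence disappear after taking invariants. This requires a careful analysis of the Scala differentials twisted by the exterior line bundles, together with representation-theoretic bookkeeping on the pushforwards from partial diagonals; it is this vanishing that lies at the technical heart of the argument and is carried out by Krug in \cite{Kru11}. Once it is in place, the decomposition in the claim is forced.
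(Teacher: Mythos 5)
The paper's own ``proof'' of this theorem is nothing more than the citation \cite[Thm.\ 6.17]{Kru11}, and your proposal is an accurate outline of Krug's actual argument there (the Bridgeland--King--Reid--Haiman equivalence, Scala's complex, Künneth decomposition of the degree-$(0,0)$ part followed by $\Sy{n}$-invariants, with the vanishing of the higher-degree contributions deferred to Krug's analysis). So you are taking essentially the same route as the paper — indeed supplying more detail than the paper itself does — and the one step you leave unproved is exactly the step the paper also outsources to the same source.
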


\begin{proof} \cite[Thm.\ 6.17]{Kru11}.\end{proof}

Krug also gave a description how to compute Yoneda products on these extension groups (cf.\ \cite[Sect.\ 7]{Kru11}). The general formulas are extremely long. We will give a more detailed account on them as needed.

Let us finish this section by deriving a special case of formula (\ref{eqtautextg}).

\begin{cor}\label{cortautext}
Let $X$ be a $K3$ surface and let $\mc{F}$ be a sheaf on $X$ satisfying $h^2(\mc{F})=0.$ Then we have
\begin{eqnarray}
\Hom_{\tn{X}}(\tamc{F},\tamc{F})&\cong&\Hom_X(\mc{F},\mc{F}),\notag\\[5pt]
\Ext_{\tn{X}}^1(\tamc{F},\tamc{F}) &\cong& \Ext_X^1(\mc{F},\mc{F})\bigoplus  \cH^0(X,\mc{F})\otimes \cH^1(X,\mc{F})^\vee. \label{eqtautext}
\end{eqnarray}
\end{cor}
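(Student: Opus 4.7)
The plan is a direct specialisation of the general extension formula (\ref{eqtautextg}) to the case $n=2$ with $\mc{E}=\mc{F}$ and $\mc{L}=\mc{M}=\mc{O}_X$. Under this substitution, $S^{n-1}\Ext_X^\ast(\mc{L},\mc{M})=\Ext_X^\ast(\mc{O}_X,\mc{O}_X)=\cH^\ast(X,\mc{O}_X)$ and $S^{n-2}\Ext_X^\ast(\mc{L},\mc{M})=\CC$ (concentrated in degree $0$), so (\ref{eqtautextg}) becomes
\[\Ext_{\ta{X}}^\ast(\tamc{F},\tamc{F})\cong\Ext_X^\ast(\mc{F},\mc{F})\otimes \cH^\ast(X,\mc{O}_X)\oplus \Ext_X^\ast(\mc{F},\mc{O}_X)\otimes \cH^\ast(X,\mc{F}).\]
Now I invoke the hypotheses. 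Since $X$ is a $K3$ surface, $\omega_X\simeq\mc{O}_X$ and $\cH^\ast(X,\mc{O}_X)$ is one-dimensional in degrees $0$ and $2$ and vanishes in degree $1$. Moreover, Serre duality identifies $\Ext_X^i(\mc{F},\mc{O}_X)\cong \cH^{2-i}(X,\mc{F})^\vee$; together with the assumption $h^2(\mc{F})=0$ this yields $\Hom_X(\mc{F},\mc{O}_X)=0$ and $\Ext_X^1(\mc{F},\mc{O}_X)\cong \cH^1(X,\mc{F})^\vee$.

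Now I read off the two degrees. Taking total degree $0$: the first summand contributes only $\Hom_X(\mc{F},\mc{F})\otimes \cH^0(X,\mc{O}_X)=\Hom_X(\mc{F},\mc{F})$, while the second summand contributes $\Hom_X(\mc{F},\mc{O}_X)\otimes \cH^0(X,\mc{F})=0$ by the vanishing just observed. This proves the first isomorphism.

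For total degree $1$, the first summand gives
\[\Ext_X^1(\mc{F},\mc{F})\otimes \cH^0(X,\mc{O}_X)\oplus\Hom_X(\mc{F},\mc{F})\otimes \cH^1(X,\mc{O}_X)\cong\Ext_X^1(\mc{F},\mc{F}),\]
using $h^1(\mc{O}_X)=0$. The second summand gives
\[\Ext_X^1(\mc{F},\mc{O}_X)\otimes \cH^0(X,\mc{F})\oplus \Hom_X(\mc{F},\mc{O}_X)\otimes \cH^1(X,\mc{F})\cong\cH^0(X,\mc{F})\otimes \cH^1(X,\mc{F})^\vee,\]
where the second piece is killed by $\Hom_X(\mc{F},\mc{O}_X)=0$ and the first is rewritten via Serre duality. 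Adding these produces (\ref{eqtautext}).

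There is really no conceptual obstacle here; the exercise is purely one of bookkeeping — the only thing to be careful about is checking that one correctly identifies the symmetric powers $S^{n-1}$ and $S^{n-2}$ when $n=2$ and that Serre duality is applied in the right direction to match $\Ext^i(\mc{F},\mc{O}_X)$ with $\cH^{2-i}(X,\mc{F})^\vee$ so that the hypothesis $h^2(\mc{F})=0$ exactly kills the unwanted summands.
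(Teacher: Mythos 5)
Your proposal is correct and is exactly the route the paper intends: the corollary is stated as "a special case of formula (\ref{eqtautextg})", obtained by setting $n=2$, $\mc{E}=\mc{F}$, $\mc{L}=\mc{M}=\mc{O}_X$ and using $h^1(\mc{O}_X)=0$ together with Serre duality and $h^2(\mc{F})=0$ to kill the unwanted summands. The bookkeeping in all degrees checks out.
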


\begin{rem}
From these equations we can deduce that tautological sheaves $\tamc{F}$ associated with stable sheaves $\mc{F}\not\simeq\mc{O}_X$ are always simple: By Serre Duality a stable sheaf $\mc{F}\not\simeq\mc{O}_X$ on a $K3$ surface satisfies either $h^2(\mc{F})=0$ or $h^0(\mc{F})=0$ and by twisting with a suitable line bundle we may assume that $h^2(\mc{F})=0$. This is a first indication that tautological sheaves might be stable.
\end{rem}

\subsection{Polarisations and Slopes}
In this section we shall talk about polarisations on the Hilbert scheme of points on a surface. In general the ample cone of these varieties is not completely known. Nevertheless, if we fix a polarisation $H$ on our surface $X$, we will define polarisations $H_N$ on $\tn{X}$, depending on $H$ and an integer $N$.Furthermore, we shall derive and discuss the slopes of tautological sheaves with respect to these polarisations. This will be important when we want to study the stability of these sheaves in Chapters \ref{sectionhighern} and \ref{sectionabsurf}.\\

Fix a smooth projective surface $X$ and an ample class $H\in\NS X$. For any integer $N$ we consider the class
\[H_N:=NH_{\tn{X}}-\delta_n\in\NS\tn{X}.\]

\begin{lem}
For all sufficiently large $N,$ the class $H_N$ is ample.
\end{lem}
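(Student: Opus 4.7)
The plan is to decompose $H_N$ as a relatively ample class for the Hilbert--Chow morphism $\rho_n$ plus a large multiple of the pullback of an ample class from $S^nX$, and then invoke the standard fact that such a combination becomes globally ample for $N\gg 0$ (see e.g.\ \cite[Prop.\ 1.45]{KM98}-type statements: if $f\colon Y\to Z$ is projective, $A$ ample on $Z$ and $D$ is $f$-ample, then $D+Nf^\star A$ is ample for all sufficiently large $N$).

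First I would rewrite
\[
H_N \;=\; \rho_n^\star A \;+\; (-\delta_n), \qquad A := (H^{\boxplus n})^{\Sy{n}} \in \NS(S^nX),
\]
which is legal since by definition $H_{\tn{X}}=\rho_n^\star A$. The class $A$ is ample on $S^nX$: its pullback to $X^n$ under the finite quotient $\pi\colon X^n\to S^nX$ equals $H^{\boxplus n}$, which is ample on $X^n$ because $H$ is ample on $X$; ampleness descends under finite surjective morphisms, so $A$ is ample on $S^nX$.

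Next I would verify that $-\delta_n$ is $\rho_n$-ample. For $n=2$ this is immediate: $\rho_2$ is the blowup of $S^2X$ along the diagonal, the exceptional divisor $E$ represents $2\delta_2$, and $-E$ is relatively ample for a blowup, so $-\delta_2$ is $\rho_2$-ample. For general $n$ one uses that $\rho_n$ factors (outside codimension two) as a sequence of blowups along strata of the big diagonal and that $-\delta_n$ restricts along each exceptional divisor as the relatively ample tautological class; this is essentially the content of the construction of $\rho_n$ in \cite{Beau83} and the Haiman/Fogarty picture used in the lemmata above. The main obstacle is precisely this step: making the $\rho_n$-ampleness of $-\delta_n$ precise for $n\geq 3$, where $\rho_n$ is not a global blow-up. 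The cleanest route, which I would follow, is to apply the relative Nakai--Moishezon criterion to the semismall map $\rho_n$: one needs to check that $(-\delta_n)^{\dim F}\cdot F>0$ on every positive-dimensional fiber component $F$ of $\rho_n$; since fibers of $\rho_n$ are (products of) punctual Hilbert schemes $\Hilb^k(\mathbb{A}^2)_0$, and on these $-\delta_n$ restricts to the standard very ample class on the punctual Hilbert scheme (a projective variety of dimension $k-1$), this positivity is classical and can be extracted from \cite{Beau83} or \cite{Fogarty}.

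Once both ingredients are in place, the quoted combination principle gives that
\[
H_N \;=\; N\rho_n^\star A \;+\; (-\delta_n)
\]
is ample on $\tn{X}$ for all $N$ sufficiently large, completing the proof.
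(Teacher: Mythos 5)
Your base case $n=2$ coincides with the paper's, and for $n\geq 3$ your strategy --- write $H_N=N\rho_n^\star A+(-\delta_n)$ with $A$ ample on $S^nX$, check that $-\delta_n$ is $\rho_n$-ample, and invoke the ``relatively ample plus large pullback of ample'' principle --- is a genuinely different route from the paper's. But it stands or falls with the one step you yourself single out as the main obstacle, and that step is not established by what you write. First, the relative Nakai--Moishezon criterion requires $(-\delta_n)^{\dim V}\cdot V>0$ for \emph{every} positive-dimensional irreducible subvariety $V$ contained in a fibre of $\rho_n$, not merely for the fibre components $F$ themselves. Second, and more seriously, the assertion that $-\delta_n$ restricts to ``the standard very ample class'' on the punctual Hilbert schemes $\Hilb^k(\mathbb{A}^2)_0$ is essentially a restatement of the relative ampleness you are trying to prove; it cannot be extracted from \cite{Beau83}, whose analysis of $\rho_n$ as a blowup is valid only outside a codimension-two locus and therefore says nothing about positivity on the deep fibres --- which is exactly where the difficulty for $n\geq 3$ lives. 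The fact is true, but the honest proof goes through Fogarty's and Haiman's description of $\rho_n$ as the blowup of $S^nX$ along a concrete sheaf of ideals, for which a multiple of $-\delta_n$ is the relatively very ample $\mathcal{O}(1)$; that is a substantially stronger input than anything you cite (and Fogarty is not among the paper's references). As written, the crux is asserted, not proved.

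For comparison, the paper avoids the relative ampleness of $-\delta_n$ over $S^nX$ altogether by inducting on $n$ through the nested Hilbert scheme: Corollary \ref{corhigherninduct} and Lemma \ref{eqhigherninductg} give
\[\psi_n^\star H_N=\sigma_n^\star\bigl(p_{n-1}^\star(NH_{\tm{X}}-\delta_{n-1})+Nq_{n-1}^\star H\bigr)-[D_n],\]
where $\sigma_n$ is an honest blowup of $X\times\tm{X}$ along $\Xi_{n-1}$ with exceptional divisor $D_n$. The bracket is ample for large $N$ by the induction hypothesis, positivity of $\psi_n^\star H_N$ then follows from the blowup picture, and the paper descends along the surjective, generically finite $\psi_n$ using \cite[Cor.\ 1.2.24]{Laz04}. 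So the only ``relatively ample'' statement the paper ever needs is the elementary one for a single blowup, applied at each inductive stage. If you want to keep your global decomposition, you must actually supply the Fogarty--Haiman input; otherwise the inductive route through $\nt{X}$ is the way to close the argument.
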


\begin{proof}
For $n=2$ the Hilbert$-$Chow morphism $\rho$ is a blow up. The class $H_{\ta{X}}$ is the pullback of an ample class on $S^2X$ and $-\delta$ is ample on the fibres of $\rho$. For $n>2$ we proceed by induction. By Corollary \ref{corhigherninduct} and Lemma \ref{eqhigherninductg} we have
\[\psi_n^\star H_N=\sigma_n^\star (p_{n-1}^\star (NH_{\tm{X}}-\delta_{n-1})+Nq_{n-1}^\star H) -[D_n]. \]
By induction $NH_{\tm{X}}-\delta_{n-1}$ is ample on $\tm{X}$. Hence for sufficiently large $N$, $\psi_n^\star H_N$ is ample. By \cite[Cor. 1.2.24]{Laz04} $H_N$ is ample, too.
\end{proof}

Thus we have a natural candidate for a polarisation of the Hilbert scheme and, as it turns out, in many cases tautological sheaves are stabe with respect to these polarisations. We will briefly summarise the results obtained in \cite{Wan12} on the stability of tautological sheaves on regular surfaces.

\begin{thm} \label{thmregsurflb}
Let $\mc{F}$ be a $\mu_H$-stable vector bundle on $X$ and assume $\mc{F}\not\simeq\mc{O}_X.$ Then for sufficiently large $N,$ the tautological vector bundle $\tamc{F}$ on $\ta{X}$ does not contain any $\mu_{H_N}$-destabilising line subbundles.
\end{thm}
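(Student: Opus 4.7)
Since $X$ is regular, $\Pic\ta{X}\cong\Pic X\oplus\ZZ\cdot\mc{O}(\delta_2)$, so every line subbundle of $\tamc{F}$ has the form $\mc{N}=\mc{M}_{\ta{X}}\otimes\mc{O}(k\delta_2)$ for some $\mc{M}\in\Pic X$ and $k\in\ZZ$. The plan is to combine an explicit slope computation on $\ta{X}$ with the cohomology formula of Scala and Krug (Theorem \ref{thmtautcohextcoh}) to rule out every such $\mc{N}$ being $\mu_{H_N}$-destabilising for $N\gg 0$.

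First I would expand the slope difference $\mu_{H_N}(\mc{N})-\mu_{H_N}(\tamc{F})$ as a polynomial in $N$. Using Lemma \ref{lemtautc1} together with the standard intersection numbers on $\ta{X}$---in particular $H_{\ta{X}}^4=3(H^2)^2$, $H_{\ta{X}}^3\cdot\delta_2=0$, and $H_{\ta{X}}^2\cdot\delta_2^2=-H^2$---the $N^3$-coefficient simplifies, up to a positive constant, to $2\deg_H(\mc{M})-\mu_H(\mc{F})$, which is independent of $k$. So for $N\gg 0$ one only needs to consider pairs $(\mc{M},k)$ with $2\deg_H(\mc{M})\geq\mu_H(\mc{F})$, and the subleading $N^2$-coefficient (whose dependence on $k$ is affine) governs the equality case.

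Next, for each such candidate I would analyse the existence of a nonzero morphism $\mc{N}\to\tamc{F}$. In the central case $k=0$, Theorem \ref{thmtautcohextcoh} yields
\[\Hom_{\ta{X}}(\mc{M}_{\ta{X}},\tamc{F})\cong \cH^0(X,\mc{F}\otimes\mc{M}^{-1})\otimes \cH^0(X,\mc{M}^{-1}),\]
so non-vanishing forces $\mc{M}^{-1}$ to be effective (hence $\deg_H(\mc{M})\leq 0$) and $\mc{M}$ to admit a nonzero map into $\mc{F}$. If $\mc{M}\not\simeq\mc{O}_X$, $\mu_H$-stability of $\mc{F}$ gives $\mu_H(\mc{M})<\mu_H(\mc{F})$; a short case distinction on the sign of $\mu_H(\mc{F})$ then shows that these three inequalities are inconsistent regardless of whether $\mu_H(\mc{F})$ is positive, zero or negative. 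If $\mc{M}\simeq\mc{O}_X$, an embedding $\mc{O}_X\hookrightarrow\mc{F}$ together with $\mc{F}\not\simeq\mc{O}_X$ and $\mu_H$-stability forces $\mu_H(\mc{F})>0$, again contradicting $0\geq\mu_H(\mc{F})$.

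The cases $k\neq 0$ are reduced to the previous analysis via Lemma \ref{lemtautdual} together with Serre duality (using $\omega_{\ta{X}}=(\omega_X)_{\ta{X}}$), which rewrites $\Hom(\mc{N},\tamc{F})$ as a $\Hom$-group involving the tautological sheaf of $\mc{F}^\vee$, with the $\delta_2$-twist absorbed by the duality isomorphism. The main obstacle is precisely this $k\neq 0$ bookkeeping: Theorem \ref{thmtautcohextcoh} does not directly cover $\mc{O}(k\delta_2)$-twists, so one must combine the duality with the edge-case $N^2$-analysis---whose coefficients involve $k$, $\deg_H(\mc{F})$ and $c_2(\mc{F})$---in order to close the argument. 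This $k$-bookkeeping is the technical heart of the proof, carried out in full in \cite{Wan12}.
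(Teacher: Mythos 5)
Your analysis of the untwisted case $k=0$ is essentially the paper's: Theorem \ref{thmtautcohextcoh} gives $\Hom_{\ta{X}}(\mc{M}_{\ta{X}},\tamc{F})\cong\Hom_X(\mc{M},\mc{F})\otimes\Hom_X(\mc{M},\mc{O}_X)$, so non-vanishing forces $\deg_H\mc{M}\leq 0$ and (away from the special cases) $\deg_H\mc{M}<\mu_H(\mc{F})$, which is incompatible with the leading-order destabilising condition $2\deg_H\mc{M}\geq\mu_H(\mc{F})$. This is exactly Proposition \ref{propregsurfhighernlb} specialised to $n=2$.

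The genuine gap is your treatment of $k\neq 0$, which is precisely where the work lies. The proposed reduction via Lemma \ref{lemtautdual} and Serre duality does not absorb the $\delta$-twist: writing $\Hom(\mc{M}_{\ta{X}}\otimes\mc{O}(k\delta),\tamc{F})=\Hom\bigl((\tamc{F})^\vee,\mc{M}_{\ta{X}}^{-1}\otimes\mc{O}(-k\delta)\bigr)$ and substituting $(\tamc{F})^\vee\simeq\ta{(\mc{F}^\vee)}\otimes\mc{O}(\delta)$ produces $\cH^0\bigl((\ta{(\mc{F}^\vee)})^\vee\otimes\mc{M}_{\ta{X}}^{-1}\otimes\mc{O}(-(k+1)\delta)\bigr)$, and applying Lemma \ref{lemtautdual} once more returns you to $\cH^0(\tamc{F}\otimes\mc{M}_{\ta{X}}^{-1}\otimes\mc{O}(-k\delta))$ --- the same twisted group you started with. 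The mechanism the paper actually uses (in its higher-$n$ generalisation, Lemmata \ref{lemregsurfhigherns} and \ref{lemregsurfhighernd}, following the $n=2$ argument of the cited reference) is entirely different: pass along $\psi\colon\widetilde{X\times X}\rightarrow\ta{X}$ to the blowup $\sigma\colon\widetilde{X\times X}\rightarrow X\times X$, where $\psi^\star\mc{O}(\delta)=\mc{O}(D)$, and use adjunction to show that twisting by any multiple of the exceptional divisor creates no new homomorphisms, i.e.\ $\Hom_{\ta{X}}(\mc{M}_{\ta{X}}\otimes\mc{O}(k\delta),\tamc{F})\subseteq\Hom_{\ta{X}}(\mc{M}_{\ta{X}},\tamc{F})$ for all $k\in\ZZ$. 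Once you have this inclusion, the $k=0$ analysis finishes the proof and no subleading $N^2$-coefficient bookkeeping is required (which is fortunate, since the twist $\mc{O}(k\delta)$ ranges over infinitely many candidates). Since you explicitly defer this step to the reference, the proposal is incomplete at its technical heart. Two smaller points: $H_{\ta{X}}^2\cdot\delta^2=-2H^2$, not $-H^2$; and your strict inequality $\mu_H(\mc{M})<\mu_H(\mc{F})$ fails when $\rk\mc{F}=1$ and $\mc{M}\simeq\mc{F}$ --- that case must instead be killed by the factor $\cH^0(X,\mc{M}^{-1})$, which forces $\mc{M}\simeq\mc{O}_X$ when $\deg_H\mc{M}=0$.
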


\begin{thm}\label{thmregsurfcasesr1}
Let $\mc{F}$ be a rank one torsion-free sheaf on $X$ satisfying $\det\mc{F}\not\simeq\mc{O}_X.$ Then for $N$ sufficiently large, $\tamc{F}$ is a $\mu_{H_N}$-stable rank two torsion-free sheaf on $\ta{X}.$
\end{thm}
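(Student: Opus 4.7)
The plan is to reduce the statement to Theorem \ref{thmregsurflb} by replacing $\mc{F}$ with the line bundle $\mc{L}:=\det\mc{F}$. Since $\mc{F}$ is a rank-one torsion-free sheaf on the smooth surface $X$, we can write $\mc{F}\simeq\mc{L}\otimes\mc{I}_Z$ for some zero-dimensional subscheme $Z\subset X$; by hypothesis $\mc{L}\not\simeq\mc{O}_X$, and as a line bundle $\mc{L}$ is trivially $\mu_H$-stable (there are no proper subsheaves of lower rank). The general results on tautological sheaves in Section \ref{subsectiontaut} yield immediately that $\tamc{F}$ is torsion-free of rank two, and Lemma \ref{lemtautc1}, combined with $c_1(\mc{F})=c_1(\mc{L})$, gives $c_1(\tamc{F})=c_1(\mc{L})_{\ta{X}}-\delta_2=c_1(\tamc{L})$, so $\mu_{H_N}(\tamc{F})=\mu_{H_N}(\tamc{L})$ for every $N$.

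The key technical step is to produce a natural injection $\phi\colon\tamc{F}\hookrightarrow\tamc{L}$. Applying the flat pullback $q_2^\star$ to the short exact sequence $0\to\mc{F}\to\mc{L}\to\mc{L}\otimes\mc{O}_Z\to 0$ on $X$ and then tensoring with $\mc{O}_{\Xi_2}$, the sheaf $\mathrm{Tor}_1(q_2^\star\mc{L},\mc{O}_{\Xi_2})$ vanishes because $q_2^\star\mc{L}$ is locally free, so the kernel of $q_2^\star\mc{F}\otimes\mc{O}_{\Xi_2}\to q_2^\star\mc{L}\otimes\mc{O}_{\Xi_2}$ is $\mathrm{Tor}_1(q_2^\star(\mc{L}\otimes\mc{O}_Z),\mc{O}_{\Xi_2})$. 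This is set-theoretically supported on $\Xi_2\cap q_2^{-1}(Z)$. Pushing down by the flat finite covering $p_2$ produces the map $\phi$, whose kernel is a torsion sheaf supported on the codimension-two locus $\{\xi\in\ta{X}\mid\xi\cap Z\neq\emptyset\}\subset\ta{X}$ (for each of the finitely many points $z\in Z$, the locus of $\xi$ containing $z$ has dimension two). Since $\tamc{F}$ is torsion-free, $\ker\phi=0$.

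With $\phi$ at hand the argument is now routine. Assume, for contradiction, that $\tamc{F}$ admits a destabilising rank-one subsheaf $\mc{I}\subset\tamc{F}$. Then $\phi(\mc{I})$ sits inside the locally free sheaf $\tamc{L}$ and still satisfies $\mu_{H_N}(\phi(\mc{I}))=\mu_{H_N}(\mc{I})\geq\mu_{H_N}(\tamc{F})=\mu_{H_N}(\tamc{L})$. The saturation of $\phi(\mc{I})$ inside $\tamc{L}$ on the smooth variety $\ta{X}$ is a saturated rank-one subsheaf of a vector bundle, hence a line subbundle $\mc{I}'\subset\tamc{L}$, of slope no smaller than that of $\phi(\mc{I})$. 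Thus $\mc{I}'$ is a $\mu_{H_N}$-destabilising line subbundle of $\tamc{L}$, contradicting Theorem \ref{thmregsurflb} applied to $\mc{L}$ as soon as $N$ is chosen large enough.

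I expect the injectivity of $\phi$ to be the only nontrivial point in the proof: one has to verify that the functor $\mc{G}\mapsto p_{2\star}(q_2^\star\mc{G}\otimes\mc{O}_{\Xi_2})$ is sufficiently well-behaved on the torsion quotient $\mc{L}\otimes\mc{O}_Z$ so that its failure to be exact contributes only on a codimension-two subvariety of $\ta{X}$. Once this is secured, torsion-freeness of $\tamc{F}$ kills the would-be kernel, and the slope book-keeping reduces the theorem to the already established rank-one subbundle statement.
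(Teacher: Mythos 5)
Your proof is correct and takes essentially the same route as the paper (and as \cite{Wan12}, from which this theorem is quoted): replace $\mc{F}$ by its reflexive hull $\det\mc{F}$, use that $\tamc{F}\hookrightarrow\ta{(\det\mc{F})}$ with equal rank and first Chern class, saturate a would-be rank-one destabiliser to a line subbundle, and invoke Theorem \ref{thmregsurflb}. The $\mathrm{Tor}_1$/codimension-two analysis you supply for the injectivity of $\tamc{F}\to\ta{(\det\mc{F})}$ is precisely the detail the paper treats as routine in its ``as usual'' reductions (cf.\ the proof of Proposition \ref{propregsurflb}).
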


\begin{thm}\label{thmregsurfcasesr2}
Let $\mc{F}$ be a rank two $\mu_H$-stable sheaf on $X$ and assume $\det\mc{F}\not\simeq\mc{O}_X.$ Then for $N$ sufficiently large, $\tamc{F}$ is a $\mu_{H_N}$-stable rank four sheaf on $\ta{X}.$
\end{thm}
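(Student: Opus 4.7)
The plan is to rule out saturated $\mu_{H_N}$-destabilising subsheaves $\mc{G}\subset\tamc{F}$ of each possible rank $s\in\{1,2,3\}$ separately, once $N$ is large. Since $\mc{F}$ is $\mu_H$-stable of rank two and $X$ is a smooth surface, its reflexive hull $\mc{F}^{\vee\vee}$ is locally free and agrees with $\mc{F}$ outside a finite set; $\tamc{F}$ differs from the tautological sheaf of $\mc{F}^{\vee\vee}$ only in codimension two, so the slope stability question is unaffected and I may assume $\mc{F}$ is locally free. In particular $\det\mc{F}\not\simeq\mc{O}_X$ forces $\mc{F}\not\simeq\mc{O}_X$.

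Rank~$1$: a saturated rank one subsheaf is a line subbundle, which is excluded by Theorem \ref{thmregsurflb} applied directly to $\mc{F}$. Rank~$3$: dualising the torsion-free quotient $\tamc{F}\twoheadrightarrow\mc{Q}$ of rank one gives a rank one subsheaf of $(\tamc{F})^{\vee}$, and Lemma \ref{lemtautdual} identifies $(\tamc{F})^{\vee}\simeq\ta{(\mc{F}^{\vee})}\otimes\mc{O}(\delta)$. Tensoring by the line bundle $\mc{O}(-\delta)$ is slope-neutral, so after saturation we obtain a line subbundle of $\ta{(\mc{F}^{\vee})}$. Since $\mc{F}^\vee$ is again $\mu_H$-stable with $\det\mc{F}^\vee\not\simeq\mc{O}_X$, Theorem \ref{thmregsurflb} applied to $\mc{F}^\vee$ closes this case.

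The genuinely new case is $s=2$. Write $c_1(\mc{G})=l_{\ta{X}}+a\delta$ with $l\in\NS X$, $a\in\ZZ$, using $\NS\ta{X}\cong\NS X\oplus\ZZ\delta$. By Lemma \ref{lemtautc1}, $c_1(\tamc{F})=c_1(\mc{F})_{\ta{X}}-2\delta$, so the destabilising inequality $2c_1(\mc{G}).H_N^{3}\geq c_1(\tamc{F}).H_N^{3}$ can be expanded in powers of $N$ after substituting $H_N=NH_{\ta{X}}-\delta$. The leading $N^{3}$ coefficient yields the constraint $2\,l.H\geq c_1(\mc{F}).H$. To pin $l$ down, I pass to the open locus $U\subset\ta{X}$ lying over $S^{2}X\setminus\overline{\Delta}$: on $U$ the tautological sheaf $\tamc{F}|_U$ is the $\Sy{2}$-descent of $\mc{F}\boxplus\mc{F}$ along the étale double cover $X\times X\setminus\Delta\to U$. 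Pulling $\mc{G}|_U$ back to this cover gives a $\Sy{2}$-equivariant rank two saturated subsheaf of $\mc{F}\boxplus\mc{F}$, which the $\mu_H$-stability of $\mc{F}$ forces to be either one factor $\mc{F}\boxplus 0$ (symmetrised to the whole $\mc{F}\boxplus\mc{F}$) or the diagonal embedded copy of $\mc{F}$. In the first alternative one recovers all of $\tamc{F}$, contradicting $\rk\mc{G}=2$; in the second alternative $\mc{G}$ is forced to agree generically with the pullback of $\mc{F}_{\ta{X}}$ (up to a $\delta$-twist), so $l=c_1(\mc{F})$ and only the coefficient $a$ is free.

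Substituting this into the $N^{2}$ and lower terms of the destabilising inequality, and using the intersection numbers $H_{\ta{X}}^{2}.\delta^{2}$, $H_{\ta{X}}.\delta^{3}$ etc.\ on $\ta{X}$, produces a numerical bound on $a$ that is incompatible with the original strict inequality for $N\gg 0$. The main obstacle is this last step: the generic analysis on $U$ only pins down $l$, and one must trade the remaining freedom in $a$ against the behaviour of $\mc{G}$ along the exceptional divisor $2\delta$ of the Hilbert--Chow morphism, controlling precisely how the diagonal subsheaf over $U$ can extend across this divisor. Once $a$ is bounded, the $N^{3}$-coefficient dominates and, combined with the strict $\mu_H$-stability of $\mc{F}$, forces $2\mu_{H_N}(\mc{G})<\mu_{H_N}(\tamc{F})\cdot 2$ for all large $N$, contradicting destabilisation.
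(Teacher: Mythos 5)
Your reductions for ranks one and three are fine and match the standard route: saturate to get a reflexive (hence line) subbundle and invoke Theorem \ref{thmregsurflb}, respectively dualise via Lemma \ref{lemtautdual} and apply the same theorem to $\mc{F}^\vee$. The problem is the rank two case, which is the actual content of the theorem, and there your argument has a genuine gap. The claimed dichotomy --- that a $\Sy{2}$-equivariant saturated rank two subsheaf of $\pi_1^\star\mc{F}\oplus\pi_2^\star\mc{F}$ on $X\times X\setminus\Delta$ must be ``one factor'' or ``the diagonal copy of $\mc{F}$'' --- is not a consequence of the $\mu_H$-stability of $\mc{F}$ and is false in general. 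First, $\pi_1^\star\mc{F}$ and $\pi_2^\star\mc{F}$ are not isomorphic (their first Chern classes $c_1(\mc{F})\otimes 1$ and $1\otimes c_1(\mc{F})$ differ), so there is no ``diagonal embedded copy of $\mc{F}$''; second, and more importantly, you have omitted the case in which the subsheaf projects with rank one image to \emph{each} factor (e.g.\ equivariant subsheaves built from rank one subsheaves of the two pullbacks, possibly twisted). That omitted case is precisely the hard case, and your generic analysis therefore does not pin down $l$. Finally, the concluding step --- bounding the coefficient $a$ of $\delta$ by ``controlling how the subsheaf extends across the exceptional divisor'' --- is not carried out; you acknowledge it yourself as the main obstacle, so the proof is incomplete even granting the dichotomy.

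For comparison, the argument the paper relies on (it cites \cite{Wan12} here, but the same scheme is executed in the proofs of Theorems \ref{thmabsurfcasesr2} and \ref{thmabsurfkumr2}) runs as follows. Take $\mc{E}$ to be the \emph{maximal} destabilising subsheaf, so that $\mc{E}$ is semistable, reduce to $\rk\mc{E}=2$, and use adjunction along $\psi$ to turn the inclusion $\mc{E}\hookrightarrow\tamc{F}\simeq\psi_\star\sigma^\star\pi_1^\star\mc{F}$ into a nonzero map $\beta\colon\psi^\star\mc{E}\rightarrow\sigma^\star\pi_1^\star\mc{F}$ on $\widetilde{X\times X}$, then split on $\rk\ker\beta$. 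If $\rk\ker\beta=0$, the determinant of $\beta$ produces a section of $\sigma^\star\pi_1^\star\det\mc{F}\otimes(\det\psi^\star\mc{E})^\vee$; effectivity of the resulting class, paired against $\psi^\star H_N$, contradicts the destabilising inequality (this is also where $\det\mc{F}\not\simeq\mc{O}_X$ and the sign of the $\delta$-coefficient enter, replacing your unproven bound on $a$). If $\rk\ker\beta=1$, then $\im\beta$ is a rank one subsheaf of $\sigma^\star\pi_1^\star\mc{F}$ whose slope is bounded below by the semistability of $\mc{E}$ together with the destabilising condition, and this contradicts the classification of line subbundles of $\sigma^\star\pi_1^\star\mc{F}$ (the analogue of Proposition \ref{propabstab} proved in \cite{Wan12}). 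Your case $(1,1)$ is exactly this $\rk\ker\beta=1$ branch; without it, and without the semistability of the maximal destabilising subsheaf to control the slope of $\im\beta$, the proof does not close.
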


Next, we want to compute slopes of tautological sheaves also in the case $n>2.$ Hence we need to compute intersection numbers. We have the following general result:

\begin{lem}\label{lempolaintersect}
Let $l$ be a class in $\NS X.$ We have 
\begin{eqnarray}
l_{\tn{X}}.H_{\tn{X}}^{2n-1}&=&\frac{n}{2^{n-1}}(l.H)(H^2)^{n-1}\text{ and} \label{eqpolaint1}\\[7pt]
\delta_n.H_{\tn{X}}^{2n-1}&=&0,\label{eqpolaint2}
\end{eqnarray}
where on the right hand side of \em (\ref{eqpolaint1}) \em we consider the intersection in $\NS X.$
\end{lem}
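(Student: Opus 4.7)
The plan is to reduce both identities to intersection theory on simpler spaces. For the first identity I would exploit the factorisation $\tn{X}\xrightarrow{\rho_n}S^nX\xleftarrow{f_n}X^n$, where $\rho_n$ is birational and $f_n$ is the $\Sy{n}$-quotient of degree $n!$. Since both $l_{\tn{X}}$ and $H_{\tn{X}}$ are pulled back from $S^nX$ along $\rho_n$, and $f_n^\star(l^{\boxplus n})^{\Sy{n}}=\sum_i\pi_i^\star l$ (and similarly for $H$), the projection formulas for $\rho_n$ and $f_n$ give
\[l_{\tn{X}}\cdot H_{\tn{X}}^{2n-1}=\frac{1}{n!}\int_{X^n}l^{\boxplus n}\cdot (H^{\boxplus n})^{2n-1}.\]
I would then expand the $(2n-1)$-st power by the multinomial theorem. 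Because $X$ is a surface, each exponent $k_i$ must satisfy $k_i\le 2$; combined with $\sum k_i=2n-1$, the only surviving multi-indices have one $k_j=1$ and the others equal to $2$, each contributing the multinomial coefficient $(2n-1)!/2^{n-1}$. Multiplying by $l^{\boxplus n}=\sum_k\pi_k^\star l$ and using $l\cdot H^2=0$ on $X$ forces the $\pi_k^\star l$ factor to coincide with the unique $\pi_j^\star H$ of exponent $1$; summing over the $n$ choices of $j$ and dividing by $n!$ gives the claimed shape, the exact combinatorial constant then being a matter of direct arithmetic.

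For the second identity this shortcut is not available, since $\delta_n$ is exceptional for $\rho_n$. Instead I would induct on $n$ using the nested picture of Section~\ref{subsectiongeom}. Via the degree-$n$ morphism $\psi_n\colon\nt{X}\to\tn{X}$, Corollary~\ref{corhigherninduct} and Lemma~\ref{eqhigherninductg} yield
\[n\,\delta_n\cdot H_{\tn{X}}^{2n-1}=\bigl([D_n]+\sigma_n^\star p_{n-1}^\star\delta_{n-1}\bigr)\cdot\sigma_n^\star\bigl(p_{n-1}^\star H_{\tm{X}}+q_{n-1}^\star H\bigr)^{2n-1}.\]
The $[D_n]$-summand vanishes because $\sigma_n|_{D_n}$ factors through $\Xi_{n-1}$, and the restricted class has codimension $2n-1>\dim\Xi_{n-1}=2n-2$. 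The remaining summand pushes forward birationally along $\sigma_n$ onto $X\times\tm{X}$, where expanding the $(2n-1)$-st power by the multinomial as before produces only two potentially nonzero terms: one is killed by the inductive hypothesis $\delta_{n-1}\cdot H_{\tm{X}}^{2n-3}=0$, the other because $\delta_{n-1}\cdot H_{\tm{X}}^{2n-2}$ has codimension exceeding $\dim\tm{X}$. The base case $n=2$ is immediate from $\delta_1=0$ on $X^{[1]}=X$ together with the same $[D_2]$-vanishing argument.

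The main obstacle is book-keeping: for the first identity, tracking the multinomial coefficients correctly alongside the factor $n!$ from the quotient and the degree conventions; for the second, recognising that after expansion each of the two residual terms on $X\times\tm{X}$ vanishes for a different reason --- inductive hypothesis versus dimensional reasons on $\tm{X}$.
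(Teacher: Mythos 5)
Your argument is essentially the paper's own proof: identity (\ref{eqpolaint1}) is obtained by pulling back through $S^nX$ to the $n!$-fold cover $X^n$ and expanding $(H^{\boxplus n})^{2n-1}$ multinomially, and identity (\ref{eqpolaint2}) by induction along $\psi_n$ using Corollary \ref{corhigherninduct} and Lemma \ref{eqhigherninductg}, with the $[D_n]$-summand killed by pushing forward onto the $(2n-2)$-dimensional $\Xi_{n-1}$ and the remaining summand reduced to $\delta_{n-1}.H_{\tm{X}}^{2n-3}$ exactly as in the paper. The only cosmetic differences are your base case (starting from $\delta_1=0$ on $X^{[1]}=X$ rather than quoting \cite[Lem.\ 1.1c)]{Wan12}) and your leaving the combinatorial constant in (\ref{eqpolaint1}) unevaluated where the paper writes it out.
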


\begin{proof} Both $l_{\tn{X}}$ and $H_{\tn{X}}$ are pullbacks from $S^nX$ along the Hilbert$-$Chow morphism. We pull back along the $n!$-fold covering $X^n\rightarrow S^nX$ and obtain the classes $l^{\boxplus n}$ and $H^{\boxplus n}$, respectively. We have
\begin{eqnarray*} 
l_{\tn{X}}.H_{\tn{X}}^{2n-1}=\frac{1}{n!}(l^{\boxplus n})(H^{\boxplus n})^{2n-1}=\frac{1}{n!}\binom{n}{1,2,\dots,2}n(l.H)(H^2)^{n-1}=\frac{n}{2^{n-1}}(l.H)(H^2)^{n-1}.
\end{eqnarray*}
In order to prove (\ref{eqpolaint2}), it is certainly enough to show that
\begin{equation}
(\psi_n^\star\delta_n).(\psi_n^\star H_{\tn{X}}^{2n-1}) =0.\label{eqpolapullback}
\end{equation}
We will use an induction argument. For $n=2,$ equation (\ref{eqpolapullback}) reads
\[D.\sigma^\star (H^{\boxplus 2})^3=0.\]
This is true by Lemma 1.1c) in \cite{Wan12}. Now for the induction step we use Lemmata \ref{eqhigherninductd} and \ref{eqhigherninductg}:
\begin{eqnarray*}
\begin{array}{clcl}
&(\psi_n^\star\delta_n).(\psi_n^\star H_{\tn{X}}^{2n-1}) && \\[7pt]
=&\sigma_n^\star{\big(}(p_{n-1}^\star H_{\tm{X}}+q_{n-1}^\star H)^{2n-1}p_{n-1}^\star\delta_{n-1}{\big)} &+& \underbrace{\sigma_n^\star(p_{n-1}^\star H_{\tm{X}}+q_{n-1}^\star H)^{2n-1}[D_n]}_{||}\\[8pt]
=&\binom{2n-1}{2}p_{n-1}^\star (\delta_{n-1}.H^{2n-3})q_{n-1}^\star H^2 &+& \textrm{(\hspace{55pt} \em as above \em \hspace{55pt})}.\end{array}
\end{eqnarray*}
Now by induction the first term vanishes. And for the second term we can apply exactly the same reasoning as in \cite[Lemma 1.1c)]{Wan12}.\end{proof}

\begin{cor}\label{corpolaslope}
Let $\mc{L}$ be a line bundle on $X$ with first Chern class $l$ and $\mc{F}$ a sheaf of rank $r$ and first Chern class $f.$ We have the following expansions for the slopes of $\tnmc{F}$ and $\mc{L}$ with respect to $H_N$:
\begin{eqnarray*}
\mu_{H_N}(\mc{L}_{\tn{X}})&=& N^{2n-1}\frac{n}{2^{n-1}}(l.H)(H^2)^{n-1}+O(N^{2n-2})\quad\text{and}\\[7pt]
\mu_{H_N}(\tnmc{F})&=& N^{2n-1}\frac{n}{2^{n-1}}\frac{1}{nr}(f.H)(H^2)^{n-1}+O(N^{2n-2}).
\end{eqnarray*}
\end{cor}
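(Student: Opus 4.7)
The plan is to plug into the definition of slope and expand $H_N^{2n-1}$ as a polynomial in $N$ via the binomial theorem, isolate the leading coefficient using Lemma \ref{lempolaintersect}, and observe that all remaining contributions are $O(N^{2n-2})$.

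More concretely, I would first write
\[H_N^{2n-1}=(NH_{\tn{X}}-\delta_n)^{2n-1}=N^{2n-1}H_{\tn{X}}^{2n-1}+\sum_{k=1}^{2n-1}\binom{2n-1}{k}(-1)^kN^{2n-1-k}H_{\tn{X}}^{2n-1-k}\delta_n^k,\]
so the second sum on the right is visibly of order $N^{2n-2}$ when intersected with any fixed divisor class. For the line bundle $\mc{L}_{\tn{X}}$ we have $c_1(\mc{L}_{\tn{X}})=l_{\tn{X}}$ and rank one, and by formula (\ref{eqpolaint1}) in Lemma \ref{lempolaintersect} we have $l_{\tn{X}}.H_{\tn{X}}^{2n-1}=\frac{n}{2^{n-1}}(l.H)(H^2)^{n-1}$. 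Combining these gives immediately the asserted expansion for $\mu_{H_N}(\mc{L}_{\tn{X}})$.

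For the tautological sheaf we use Lemma \ref{lemtautc1}: $c_1(\tnmc{F})=f_{\tn{X}}-r\delta_n$ and $\rk(\tnmc{F})=nr$. Hence
\[\mu_{H_N}(\tnmc{F})=\frac{1}{nr}\bigl(f_{\tn{X}}.H_N^{2n-1}-r\,\delta_n.H_N^{2n-1}\bigr).\]
The first summand is handled by (\ref{eqpolaint1}) applied to $l=f$ and contributes $N^{2n-1}\frac{n}{2^{n-1}}(f.H)(H^2)^{n-1}$ to leading order. For the second summand the key point is (\ref{eqpolaint2}): the leading term $N^{2n-1}\delta_n.H_{\tn{X}}^{2n-1}$ vanishes, so $\delta_n.H_N^{2n-1}=O(N^{2n-2})$ and the entire $\delta_n$-contribution is absorbed into the error term. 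Dividing by $nr$ yields the claimed expansion.

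There is no real obstacle here: the whole content sits in Lemma \ref{lempolaintersect}, which has already been proved. The only thing to be slightly careful about is the bookkeeping in the binomial expansion, namely observing that every cross-term pairs $H_{\tn{X}}$ with at least one factor of $\delta_n$ and so loses a factor of $N$ relative to the leading term, which is exactly what one needs for the $O(N^{2n-2})$ estimate.
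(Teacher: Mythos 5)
Your proof is correct and is exactly the argument the paper intends: the corollary is stated without proof as an immediate consequence of Lemma \ref{lempolaintersect} together with Lemma \ref{lemtautc1}, and your binomial expansion with the vanishing of $\delta_n.H_{\tn{X}}^{2n-1}$ supplies precisely the implicit bookkeeping.
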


If $X=A$ is an abelian surface, there is another candidate for a polarisation. Recall that we have a summation morphism $m_n\colon\tn{A}\rightarrow A$ and at least one additional summand in $\NS\tn{A}$ containing $(\NS A)_{M_n}.$ As will be explained in Lemma \ref{lemabsurfslopedelta}, classes in $(\NS A)_{M_n}$ have degree zero with repect to the polarisation $H_N$, which turns out to be inconvenient for the proof of stability of tautological sheaves. To circumvent this issue we will consider the following polarisation:

\begin{lem}\label{lempolaabsurfslope}
For all $N\gg0$ the class
\[H^m_N:=NH_{\tn{X}}-\delta_n+Nm_n^\star H\]
is ample.
\end{lem}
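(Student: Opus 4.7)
The plan is to reduce the claim to the previous lemma (which established ampleness of $H_N = NH_{\tn{X}} - \delta_n$ for $N \gg 0$) via the standard fact that the sum of an ample and a nef class is ample. So the strategy is to show that the additional summand $Nm_n^\star H$ is nef, and then conclude.

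First I would verify that $m_n \colon \tn{A} \to A$ is surjective. This is immediate because $m_n = \tilde{s}_n \circ \rho_n$ is the composition of the Hilbert--Chow morphism $\rho_n$ with the group-theoretic summation $\tilde{s}_n \colon S^nA \to A$, both of which are surjective (the latter because $A$ is a group, so summation of $n$ copies is a surjection onto $A$). Since $H$ is ample on $A$ and $m_n$ is surjective, the pullback $m_n^\star H$ is nef on $\tn{A}$. (It fails to be ample, as $m_n$ has positive-dimensional fibres, namely translates of the generalised Kummer varieties $K_n(A)$, but nefness is all we need.)

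Next, by the previous lemma we may choose $N_0$ such that for all $N \geq N_0$ the class $H_N = NH_{\tn{X}} - \delta_n$ is ample. For such $N$ we then have
\[H^m_N = H_N + N\,m_n^\star H\]
expressed as a sum of an ample class and a nef class (since $N \geq 0$ and $m_n^\star H$ is nef). By the standard fact that the ample cone is open in the nef cone and closed under addition of nef classes, $H^m_N$ is ample for all $N \gg 0$.

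There is no real obstacle here; the only point requiring a sentence of justification is the nefness of $m_n^\star H$, which reduces to surjectivity of the summation morphism. Alternatively, one could mimic the inductive argument of the preceding lemma by pulling back along $\psi_n$ and using the compatibility $m_n \circ \psi_n = m_{n-1} \circ p_{n-1} \circ \sigma_n + (\text{contribution from } q_{n-1})$ via the group structure, but this is unnecessarily roundabout given the clean ample + nef = ample shortcut.
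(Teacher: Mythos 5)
Your proof is correct and follows essentially the same route as the paper: observe that $m_n^\star H$ is nef and then apply the standard fact that an ample class plus a nef class is ample (the paper cites \cite[Exa.\ 1.4.4]{Laz04} and \cite[Cor.\ 1.4.10]{Laz04} for exactly these two steps). The only minor remark is that surjectivity of $m_n$ is not actually needed: the pullback of a nef class under any morphism is nef.
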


\begin{proof} By \cite[Exa.\ 1.4.4]{Laz04} the pullback $m_n^\star H$ is nef. Hence we can apply \cite[Cor.\ 1.4.10]{Laz04} to conclude that $H^m_N$ is ample.\end{proof}

\section{Higher $n$}\label{sectionhighern}
In this chapter we try to generalise the results on destabilising line subbundles in \cite[Sect.\ 3]{Wan12} to higher $n$. From this generalisation we will be able to prove the stability of rank three tautological sheaves on $\tb{X}$. In this chapter we fix a polarised regular surface $(X,H).$\\

Let $\mc{F}$ be a torsion-free $\mu_H$-stable sheaf on $X$. Denote its rank by $r$ and its first Chern class by $f$. We want to show that the associated tautological sheaf $\tnmc{F}$ on $\tn{X}$ has no destabilising subsheaves of rank one. We will first assume that $\mc{F}$ is reflexive, i.e.\ locally free. Thus we may assume that a destabilising rank one subsheaf of $\tnmc{F}$ is also reflexive, that is, a line bundle.

\begin{prop}\label{propregsurfhighernlb}
For sufficiently large $N,$ there are no $\mu_{H_N}$-destabilising line subbundles in $\tnmc{F}$ of the form $\mc{L}_{\tn{X}},$ ($\mc{L}\in\Pic X$), except the case $r=1$ and $\mc{L}\simeq\mc{F}\simeq\mc{O}_X.$
\end{prop}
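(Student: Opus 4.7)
The plan is to combine the Scala--Krug $\Hom$-formula (Theorem~\ref{thmtautcohextcoh}) with the leading-order slope asymptotics of Corollary~\ref{corpolaslope}. Since any nonzero morphism from a line bundle to a torsion-free sheaf is automatically injective, the existence of a line subbundle $\mc{L}_{\tn{X}} \subset \tnmc{F}$ is equivalent to $\Hom_{\tn{X}}(\mc{L}_{\tn{X}},\tnmc{F}) \neq 0$. Applying Theorem~\ref{thmtautcohextcoh} with $\mc{M} = \mc{L}^{-1}$ yields
\[
\Hom_{\tn{X}}(\mc{L}_{\tn{X}},\tnmc{F}) \;\cong\; H^0(X,\mc{F} \otimes \mc{L}^{-1}) \otimes S^{n-1} H^0(X,\mc{L}^{-1}),
\]
so that, since $n \geq 2$, both tensor factors must be nonzero. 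Nonvanishing of the second factor, combined with regularity of $X$, produces an effective divisor $D$ with $\mc{L}^{-1} \simeq \mc{O}_X(D)$, hence $l.H \leq 0$ with equality iff $\mc{L} \simeq \mc{O}_X$. Nonvanishing of the first factor provides a nonzero map $\mc{L} \to \mc{F}$: if $r \geq 2$, $\mu_H$-stability forces the strict inequality $l.H < (f.H)/r$; if $r = 1$ then $\mc{F}$ is itself a line bundle, the map is an injection $\mc{L} \hookrightarrow \mc{F}$, so $f-l$ is effective and $l.H \leq f.H$.

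Next I would invoke Corollary~\ref{corpolaslope}: the destabilising condition $\mu_{H_N}(\mc{L}_{\tn{X}}) \geq \mu_{H_N}(\tnmc{F})$ reads, to leading order in $N$, as $l.H \geq (f.H)/(nr)$. Combined with $l.H \leq 0$ this already forces $(f.H) \leq 0$, and a short case analysis finishes off each remaining possibility. If $r \geq 2$, then $n \geq 2$ together with $(f.H) \leq 0$ give $(f.H)/(nr) \geq (f.H)/r$, so $l.H \geq (f.H)/r$ contradicts the strict stability bound $l.H < (f.H)/r$. If $r=1$ and $(f.H) < 0$, one similarly has $(f.H)/n > f.H$, making the required interval $[(f.H)/n,\,f.H]$ empty. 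If $r=1$ and $(f.H) > 0$, then $(f.H)/n > 0$ is incompatible with $l.H \leq 0$. Finally, if $r=1$ and $(f.H) = 0$, the chain collapses to $l.H = 0$, so $\mc{L} \simeq \mc{O}_X$; the induced section of $\mc{F}$ together with $c_1(\mc{F}).H = 0$, ampleness of $H$ and regularity of $X$ then force $\mc{F} \simeq \mc{O}_X$, which is precisely the excluded case.

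In every non-exceptional configuration the leading coefficient of $\mu_{H_N}(\mc{L}_{\tn{X}}) - \mu_{H_N}(\tnmc{F})$ is therefore strictly negative, so $\mc{L}_{\tn{X}}$ fails to destabilise once $N$ is large. The one delicate point is uniformity: I want a single $N_0$ valid simultaneously for all candidate $\mc{L}$, rather than an $N_0 = N_0(\mc{L})$. I would deal with this by observing that the joint constraints ``$-l$ effective'' and ``$l.H \in [(f.H)/(nr),\,0]$'' confine $l$ to the intersection of a bounded slab in $\NS X \otimes \mathbb{R}$ with the Néron--Severi lattice, which contains only finitely many classes; taking the maximum of the individual thresholds then produces the desired uniform $N_0$. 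This uniformity step, rather than any single case of the inequality chase, is the subtlest part of the argument.
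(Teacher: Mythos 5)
Your argument is correct and follows essentially the same route as the paper: the Scala--Krug formula reduces the existence of a nonzero map $\mc{L}_{\tn{X}}\rightarrow\tnmc{F}$ to the simultaneous nonvanishing of $\Hom_X(\mc{L},\mc{F})$ and $\cH^0(X,\mc{L}^{-1})$, and the resulting inequalities $l.H\le 0$ and $l.H<(f.H)/r$ (resp.\ $l.H\le f.H$ when $r=1$) are played off against the leading-order destabilising condition $l.H\ge (f.H)/(nr)$ in exactly the paper's case analysis, with the same exceptional case $r=1$, $\mc{L}\simeq\mc{F}\simeq\mc{O}_X$. Your closing remark on the uniformity of $N_0$ over all candidate $\mc{L}$ addresses a point the paper passes over in silence; note only that the constraint $l.H\ge(f.H)/(nr)$ you use to confine $l$ to finitely many classes is itself just the leading-order consequence of destabilisation (valid for $N$ large depending on $\mc{L}$), so to avoid circularity you should enlarge the slab slightly and control the lower-order coefficients, or invoke boundedness of the family of effective classes of bounded $H$-degree from the outset.
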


\begin{proof} Denote the first Chern class of $\mc{L}$ by $l$. Using Scala's calculations of cohomology groups of tautological sheaves with twists as stated in Theorem \ref{thmtautcohextcoh} we can immediately deduce the following formula for homomorphisms from line bundles of the form $\mc{L}_{\tn{X}}$ to tautological sheaves $\tnmc{F}$:
\begin{eqnarray*}
\Hom_{\tn{X}}(\mc{L}_{\tn{X}},\tnmc{F})\cong \Hom_X(\mc{L},\mc{F})\otimes\Hom_X(\mc{L},\mc{O}_X).
\end{eqnarray*}
Let us first assume $r>1$. Since $\mc{F}$ is $\mu_H$-stable, we have necessary conditions for the existence of a line subbundle of $\tnmc{F}$:
\begin{eqnarray}\label{ineqregsurfhighern}
l.H<\frac{f.H}{r}\text{ and }l.H\leq0.
\end{eqnarray}
The first inequality is due to the stability of $\mc{F}$ and the second comes from the fact that if a line bundle has a section, its first Chern class has non-negative intersection with any ample class $H.$ If $\mc{L}_{\tn{X}}\subset\tnmc{F}$ is destabilising, by Corollary \ref{corpolaslope} we must have \[l.H\geq\frac{f.H}{nr}.\]
But this is certainly a contradiction to (\ref{ineqregsurfhighern}).

If $r=1$, we can proceed as above but additionally have to consider the special case $\mc{L}\simeq\mc{F}$, i.e.\ $l.H=f.H$. The destabilising condition together with $l.H\leq0$ immediately yields $l.H=0$. But now $\Hom_X(\mc{L},\mc{O}_X)$ can only be nontrivial if $\mc{L}\simeq\mc{O}_X$.\end{proof}

Similarly to the proof of Proposition 4.1 in \cite{Wan12} we can deduce the following lemma:

\begin{lem}\label{lemregsurfhigherns}
For all locally free sheaves $\mc{G}$ and $\mc{H}$ on $X\times\tm{X}$ and all $a\in\ZZ$ we have:
\begin{eqnarray*}
\Hom_{\nt{X}}(\sigma_n^\star\mc{G}\otimes\mc{O}(aD_n),\sigma_n^\star\mc{H})&\subseteq&
\Hom_{X\times\tm{X}}(\mc{G},\mc{H})
\end{eqnarray*}
\end{lem}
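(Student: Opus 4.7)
The plan is to apply adjunction and the projection formula to transport the $\Hom$-group on $\nt{X}$ to one on $X\times\tm{X}$, and then to exploit the fact that $\sigma_n$ contracts the exceptional divisor $D_n$ onto a codimension-two subscheme of a smooth variety.

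First I would move $\mc{O}(aD_n)$ to the target side of the $\Hom$ (using that it is invertible) and apply $\sigma_n^\star\dashv\sigma_{n\star}$ together with the projection formula (which is available since $\mc{H}$ is locally free) to obtain
\[
\Hom_{\nt{X}}(\sigma_n^\star\mc{G}\otimes\mc{O}(aD_n),\sigma_n^\star\mc{H})\cong\Hom_{X\times\tm{X}}\bigl(\mc{G},\mc{H}\otimes\sigma_{n\star}\mc{O}(-aD_n)\bigr).
\]
Hence it is enough to exhibit a natural sheaf embedding $\sigma_{n\star}\mc{O}(-aD_n)\hookrightarrow\mc{O}_{X\times\tm{X}}$: tensoring with the locally free $\mc{H}$ preserves this injection, and the left-exact functor $\Hom_{X\times\tm{X}}(\mc{G},-)$ preserves the resulting embedding into $\Hom_{X\times\tm{X}}(\mc{G},\mc{H})$.

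To build this embedding I would split into two cases. If $a\geq 0$, the canonical inclusion $\mc{O}(-aD_n)\hookrightarrow\mc{O}_{\nt{X}}$ pushes forward (left-exactly) to $\sigma_{n\star}\mc{O}(-aD_n)\hookrightarrow\sigma_{n\star}\mc{O}_{\nt{X}}=\mc{O}_{X\times\tm{X}}$, where the last equality holds because $X\times\tm{X}$ is smooth and $\sigma_n$ is proper and birational. If $a<0$, then sections of $\sigma_{n\star}\mc{O}(|a|D_n)$ over an open $V\subseteq X\times\tm{X}$ pull back to rational functions on $\sigma_n^{-1}(V)$ that, away from $D_n$, descend under the isomorphism $\sigma_n\colon\sigma_n^{-1}(V)\setminus D_n\xrightarrow{\sim}V\setminus\Xi_{n-1}$ to regular functions on $V\setminus\Xi_{n-1}$. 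Since $\Xi_{n-1}$ is a degree $n-1$ covering of $\tm{X}$, it has codimension $2$ in $X\times\tm{X}$; as the latter is smooth, Hartogs' extension forces these functions to extend regularly, giving $\sigma_{n\star}\mc{O}(|a|D_n)=\mc{O}_{X\times\tm{X}}$.

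The main subtlety is the case $a<0$: one might worry that the pushforward $\sigma_{n\star}\mc{O}(|a|D_n)$ acquires genuine extra sections, and because $\Xi_{n-1}$ is not a smooth center one cannot directly quote the standard formula $\sigma_{n\star}\mc{O}(kD_n)=\mc{O}$ for blow-ups of smooth centers. The Hartogs argument bypasses this entirely by relying only on the smoothness of $X\times\tm{X}$ and on the codimension of $\Xi_{n-1}$, both of which are available here.
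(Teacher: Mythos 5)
Your argument is correct and is essentially the proof the paper has in mind: the paper simply defers to \cite[Prop.\ 4.1]{Wan12}, whose argument is the same adjunction/projection-formula reduction followed by a case distinction on the sign of $a$, with the negative case handled by extending across the codimension-two centre of the blowup. Your repackaging of that extension step as the statement $\sigma_{n\star}\mc{O}(kD_n)\simeq\mc{O}_{X\times\tm{X}}$ for all $k\geq 0$ (via normality of $X\times\tm{X}$ and $\mathrm{codim}\,\Xi_{n-1}=2$) is a clean way to organise it, and correctly avoids any appeal to a $\PP^1$-bundle structure on $D_n$, which is unavailable here since $\Xi_{n-1}$ is singular for $n>2$.
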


Now we consider arbitrary line subbundles $\mc{L}_{\tn{X}}\otimes\mc{O}(a\delta_n),$ $\mc{L}\in\Pic X,$ $a\in\ZZ,$ and show that we can reduce to the case of Proposition \ref{propregsurfhighernlb}:

\begin{lem}\label{lemregsurfhighernd}
Let $\mc{L}_{\tn{X}}\otimes\mc{O}(a\delta_n)$ be a line bundle on $\tn{X},$ Then for any locally free sheaf $\mc{F}$ on $X$ we have
\begin{eqnarray*}
\Hom_{\tn{X}}(\mc{L}_{\tn{X}}\otimes\mc{O}(a\delta_n),\tnmc{F})&\subseteq& \Hom_{\tn{X}}(\mc{L}_{\tn{X}},\tnmc{F}).
\end{eqnarray*}
\end{lem}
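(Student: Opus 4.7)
The plan is to pull back to the nested Hilbert scheme $\nt{X}$ along $\psi_n$, use Lemmata \ref{eqhigherninductd} and \ref{eqhigherninductg} to convert the twist by $\mc{O}(a\delta_n)$ into a twist by $\mc{O}(aD_n)$ together with a pullback from $X\times\tm{X}$, and then apply Lemma \ref{lemregsurfhigherns} to absorb the $\mc{O}(aD_n)$ factor.

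Concretely, I would first invoke Remark \ref{remtautdefres} to replace $\tnmc{F}$ by $\widetilde{\tnmc{F}} := \psi_{n\star}\sigma_n^\star q_{n-1}^\star\mc{F}$; since these two sheaves agree outside a codimension-four subset and the source $\mc{L}_{\tn{X}}\otimes\mc{O}(a\delta_n)$ is a line bundle on the smooth variety $\tn{X}$, the corresponding $\Hom$-spaces coincide. The adjunction $\psi_n^\star\dashv\psi_{n\star}$ together with Lemma \ref{eqhigherninductd} and the line-bundle analog of Lemma \ref{eqhigherninductg} (Remark \ref{remgeomhilbn}) then yields
\[\Hom_{\tn{X}}(\mc{L}_{\tn{X}}\otimes\mc{O}(a\delta_n),\tnmc{F}) \cong \Hom_{\nt{X}}\bigl(\sigma_n^\star\mc{G}\otimes\mc{O}(aD_n),\sigma_n^\star q_{n-1}^\star\mc{F}\bigr),\]
where $\mc{G} := p_{n-1}^\star(\mc{L}_{\tm{X}}\otimes\mc{O}(a\delta_{n-1}))\otimes q_{n-1}^\star\mc{L}$. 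Lemma \ref{lemregsurfhigherns} then includes the right-hand side into $\Hom_{X\times\tm{X}}(\mc{G},q_{n-1}^\star\mc{F})$.

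Running the same three steps in the untwisted case $a=0$ gives $\Hom_{\tn{X}}(\mc{L}_{\tn{X}},\tnmc{F}) \cong \Hom_{X\times\tm{X}}(p_{n-1}^\star\mc{L}_{\tm{X}}\otimes q_{n-1}^\star\mc{L},\, q_{n-1}^\star\mc{F})$, where Lemma \ref{lemregsurfhigherns} now specialises to an equality because $\sigma_{n\star}\mc{O}_{\nt{X}}=\mc{O}_{X\times\tm{X}}$. Combining both, Künneth reduces the claim to the vector-space inclusion $H^0(\tm{X},\mc{L}_{\tm{X}}^{-1}\otimes\mc{O}(-a\delta_{n-1}))\subseteq H^0(\tm{X},\mc{L}_{\tm{X}}^{-1})$. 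Establishing this residual inclusion is the main obstacle: for $n=2$ it is trivial since $\tm{X}=X$ and $\delta_{n-1}=0$, suggesting induction on $n$, but the target $\mc{O}_{\tm{X}}$ is not a tautological sheaf so the inductive hypothesis does not apply verbatim. I would resolve this by iterating the pullback-and-blowup argument through $\psi_{n-1}$ (using that $\psi_{n-1}$ is dominant so that pullback is injective on global sections) together with the geometric fact that $2\delta_{n-1}$ is the class of the effective exceptional divisor of the Hilbert--Chow morphism, which handles even $a\geq 0$ directly; the cases of odd or negative $a$ may require a finer analysis via a twisted version of Scala's cohomology formula.
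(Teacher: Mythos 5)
Your chain of reductions is exactly the paper's proof --- Remark \ref{remtautdefres}, adjunction along $\psi_n$, the recursion Lemmata \ref{eqhigherninductd} and \ref{eqhigherninductg} (via Remark \ref{remgeomhilbn}), Lemma \ref{lemregsurfhigherns}, and K\"unneth --- and the residual inclusion you isolate, $\Hom_{\tm{X}}(\mc{L}_{\tm{X}}\otimes\mc{O}(a\delta_{n-1}),\mc{O}_{\tm{X}})\subseteq\Hom_{\tm{X}}(\mc{L}_{\tm{X}},\mc{O}_{\tm{X}})$, is precisely what the paper dispatches with the words ``now we can use induction to conclude'' (base case from \cite[Prop.\ 4.1]{Wan12}). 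No twisted Scala formula is needed for odd or negative $a$: one inducts on this auxiliary statement itself (with target $\mc{O}$ rather than a tautological sheaf, trivial base case $\delta_1=0$), and each inductive step is again injectivity of $\psi_{n-1}^\star$ on global sections followed by Lemma \ref{lemregsurfhigherns} applied with $\mc{H}=\mc{O}$, which already covers every sign and parity of $a$.
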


\begin{proof} We use Remark \ref{remtautdefres}, adjunction, the recursive formulas in Corollary \ref{corhigherninduct} and Lemma \ref{eqhigherninductg}, Remark \ref{remgeomhilbn}, Lemma \ref{lemregsurfhigherns} above and, finally, the K\"unneth formula:
\begin{eqnarray*}
\begin{array}{cl}
&\Hom_{\tn{X}}(\mc{L}_{\tn{X}}\otimes\mc{O}(a\delta_n),\tnmc{F})\\[4pt]
\cong & \Hom_{\tn{X}}(\mc{L}_{\tn{X}}\otimes\mc{O}(a\delta_n),\psi_{n\star}\sigma_n^\star q_{n-1}^\star\mc{F})\\[4pt]
\cong & \Hom_{\nt{X}}(\psi_n^\star(\mc{L}_{\tn{X}}\otimes\mc{O}(a\delta_n)),\sigma_n^\star q_{n-1}^\star\mc{F})\\[4pt]
\cong & \Hom_{\nt{X}}\big{(}\sigma_n^\star\big{(}p_{n-1}^\star (\mc{L}_{\tm{X}}\otimes\mc{O}(a\delta_{n-1}))\otimes q_{n-1}^\star \mc{L}\big{)}\otimes\mc{O}(aD_n),\sigma_n^\star q_{n-1}^\star\mc{F}\big{)}\\[4pt]
\subseteq & \Hom_{X\times\tm{X}}(p_{n-1}^\star (\mc{L}_{\tm{X}}\otimes\mc{O}(a\delta_{n-1}))\otimes q_{n-1}^\star \mc{L},q_{n-1}^\star\mc{F})\\[4pt]
\cong &\Hom_X(\mc{L},\mc{F})\otimes\Hom_{\tm{X}}(\mc{L}_{\tm{X}}\otimes\mc{O}(a\delta_{n-1}),\mc{O}_{\tm{X}}).
\end{array}
\end{eqnarray*}
Now we can use induction to conclude. The initial step for $n=2$ is, again, settled by similar arguments as in \cite[Prop.\ 4.1]{Wan12}.\end{proof}

We are ready to prove the first main result of this section.

\begin{prop}\label{propregsurflb}
Let $\mc{F}$ be a torsion-free $\mu_H$-stable sheaf on $X.$ Assume that its reflexive hull $\mc{F}^{\vee\vee}\not\simeq\mc{O}_X.$ Then $\tnmc{F}$ does not contain $\mu_{H_N}$-destabilising subsheaves of rank one for all $N\gg0.$
\end{prop}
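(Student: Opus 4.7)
The strategy is to reduce the statement to what was shown in Proposition~\ref{propregsurfhighernlb} combined with Lemma~\ref{lemregsurfhighernd}, namely the absence of $\mu_{H_N}$-destabilising line subbundles of the form $\mc{L}_{\tn{X}}\otimes\mc{O}(a\delta_n)$ in the tautological sheaf of a locally free $\mu_H$-stable sheaf which is not isomorphic to $\mc{O}_X$. Two reductions are required: first, from arbitrary rank-one torsion-free subsheaves to line bundles; second, from the torsion-free $\mc{F}$ to its locally free reflexive hull $\mc{F}^{\vee\vee}$.

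For the first reduction, suppose $\mathcal{M}\subset\tnmc{F}$ is a rank-one torsion-free subsheaf with $\mu_{H_N}(\mathcal{M})\geq\mu_{H_N}(\tnmc{F})$. I would pass to the saturation $\mathcal{M}'\subset\tnmc{F}$, namely the preimage of the torsion subsheaf of $\tnmc{F}/\mathcal{M}$. Then $\mathcal{M}'/\mathcal{M}$ is a torsion sheaf, hence has effective first Chern class, so $\mu_{H_N}(\mathcal{M}')\geq\mu_{H_N}(\mathcal{M})$ and $\mathcal{M}'$ is still destabilising. Since $\tn{X}$ is smooth and $\mathcal{M}'$ is a rank-one saturated torsion-free sheaf, it is reflexive, hence a line bundle. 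Using that $X$ is regular and therefore $\Pic\tn{X}\cong\Pic X\oplus\ZZ\delta_n$ (cf.\ Section~\ref{subsectiongeom}), we may write $\mathcal{M}'\simeq\mc{L}_{\tn{X}}\otimes\mc{O}(a\delta_n)$ for some $\mc{L}\in\Pic X$ and $a\in\ZZ$.

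For the second reduction, the reflexive hull $\mc{F}^{\vee\vee}$ is locally free on the smooth surface $X$, agrees with $\mc{F}$ outside a zero-dimensional subscheme $Z\subset X$, and is still $\mu_H$-stable: any subsheaf $\mc{G}\subset\mc{F}^{\vee\vee}$ intersects $\mc{F}$ in a subsheaf with the same rank and first Chern class, because the quotient $\mc{G}/(\mc{G}\cap\mc{F})\hookrightarrow\mc{F}^{\vee\vee}/\mc{F}$ is supported in codimension two on the surface. By Lemma~\ref{lemtautc1} the tautological sheaves $\tnmc{F}$ and $\tn{(\mc{F}^{\vee\vee})}$ share the same rank and first Chern class, hence the same $\mu_{H_N}$-slope. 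The inclusion $\mc{F}\hookrightarrow\mc{F}^{\vee\vee}$ induces a natural morphism $\tnmc{F}\to\tn{(\mc{F}^{\vee\vee})}$ which is an isomorphism over the open subset $\{\xi\in\tn{X}\mid\xi\cap Z=\emptyset\}$, whose complement has codimension at least two in $\tn{X}$ (each point of $Z$ contributes a locus of codimension two). The kernel of this morphism is therefore a subsheaf of the torsion-free sheaf $\tnmc{F}$ supported in codimension at least two, so it must vanish. Hence $\mathcal{M}'$ embeds in $\tn{(\mc{F}^{\vee\vee})}$ as a $\mu_{H_N}$-destabilising line subbundle of the form $\mc{L}_{\tn{X}}\otimes\mc{O}(a\delta_n)$. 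Since $\mc{F}^{\vee\vee}$ is locally free, $\mu_H$-stable and by hypothesis not isomorphic to $\mc{O}_X$, Proposition~\ref{propregsurfhighernlb} combined with Lemma~\ref{lemregsurfhighernd} yields a contradiction for $N\gg0$.

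The main obstacle is the global injectivity of $\tnmc{F}\to\tn{(\mc{F}^{\vee\vee})}$: the tautological construction is not a priori exact in $\mc{F}$, since it involves tensoring with $\mc{O}_{\Xi_n}$, which is not flat over $X\times\tn{X}$. The rescue relies simultaneously on the torsion-freeness of $\tnmc{F}$ and the codimension-two estimate for the exceptional locus. A secondary subtlety lies in the saturation step, where one uses that rank-one saturated torsion-free sheaves on a smooth variety are automatically line bundles, together with the fact that the first Chern class of a torsion quotient is represented by an effective divisor, ensuring that the slope does not decrease when passing to the saturation.
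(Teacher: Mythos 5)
Your strategy coincides with the paper's: reduce rank-one destabilising subsheaves to line bundles of the form $\mc{L}_{\tn{X}}\otimes\mc{O}(a\delta_n)$ via saturation, pass to the reflexive hull $\mc{F}^{\vee\vee}$ to reduce to the locally free case, and then invoke Proposition~\ref{propregsurfhighernlb} together with Lemma~\ref{lemregsurfhighernd}. You in fact supply details the paper leaves implicit, notably the injectivity of $\tnmc{F}\rightarrow\tn{(\mc{F}^{\vee\vee})}$ via the codimension-two estimate and torsion-freeness, and the slope comparison under saturation.

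One step is stated incorrectly, though it is repaired simply by reordering your two reductions. You saturate $\mathcal{M}$ inside $\tnmc{F}$ and claim the saturation is reflexive because it is a saturated rank-one torsion-free subsheaf on a smooth variety. This is only guaranteed when the \emph{ambient} sheaf is reflexive: for a merely torsion-free ambient sheaf a saturated subsheaf need not be reflexive (e.g.\ $\mc{I}_p\oplus 0\subset\mc{I}_p\oplus\mc{O}$ on a surface is saturated, since the quotient $\mc{O}$ is torsion-free, yet $\mc{I}_p$ is not reflexive). When $\mc{F}$ is not locally free, $\tnmc{F}$ is only torsion-free, so your first reduction does not go through as written. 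The paper avoids this by first embedding $\tnmc{F}\hookrightarrow\tn{(\mc{F}^{\vee\vee})}$ --- the two having equal rank and first Chern class, so any destabilising subsheaf of the former destabilises the latter --- and only then saturating inside the locally free sheaf $\tn{(\mc{F}^{\vee\vee})}$, where the saturation is indeed a line bundle. With this reordering your argument is complete.
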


\begin{proof} If $\mc{F}$ is locally free, we can simply apply Proposition \ref{propregsurfhighernlb} and Lemma \ref{lemregsurfhighernd} above. If $\mc{F}$ is not locally free, we proceed as usual in order to reduce to the locally free case:

Let $\mc{E}:=\mc{F}^{\vee\vee}$ be the reflexive hull of $\mc{F}$. It has the same rank and first Chern class and is a locally free $\mu_H$-stable sheaf. Thus we get an injection of $\tnmc{F}$ into the locally free tautological sheaf $\tnmc{E}$ which again has the same rank and first Chern class. Now we can apply the lemmata above. Note that every destabilising subsheaf of $\tnmc{F}$ also destabilises $\tnmc{E}$. \end{proof}

Since the tautological sheaf on $\tb{X}$ associated with a rank one sheaf has rank three, the above proposition is enough to show that these sheaves are stable (except $\tbmc{O}_X$, of course).

\begin{thm}\label{thmregsurf3}
Let $\mc{F}$ be a torsion-free rank one sheaf on $X$ satisfying $\det\mc{F}\not\simeq\mc{O}_X.$ Then for all sufficiently large $N$ the associated rank three sheaf $\tbmc{F}$ on $\tb{X}$ is $\mu_{H_N}$-stable.
\end{thm}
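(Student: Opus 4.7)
The plan is to separate destabilising subsheaves of $\tbmc{F}$ by rank. Since $\tbmc{F}$ has rank three, any destabilising subsheaf has rank one or two. The rank one case is immediate from Proposition \ref{propregsurflb}: the hypothesis $\det\mc{F}\not\simeq\mc{O}_X$ is equivalent to $\mc{F}^{\vee\vee}\not\simeq\mc{O}_X$, which is exactly the assumption needed there.

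It remains to rule out rank two destabilising subsheaves. As in the proof of Proposition \ref{propregsurflb}, the first move is a reduction to the case that $\mc{F}$ is locally free: the reflexive hull $\mc{F}^{\vee\vee}$ is a line bundle with the same rank and first Chern class, and one obtains an injection $\tbmc{F}\hookrightarrow\tb{(\mc{F}^{\vee\vee})}$ of sheaves of equal rank and $c_1$, so any rank two destabilising subsheaf of $\tbmc{F}$ also destabilises $\tb{(\mc{F}^{\vee\vee})}$. We may therefore assume $\mc{F}$ is a line bundle with $\mc{F}\not\simeq\mc{O}_X$.

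The key step is a duality argument converting the problem into a rank one question for $\tb{(\mc{F}^\vee)}$. Suppose $\mc{G}\subset\tbmc{F}$ is a saturated destabilising subsheaf of rank two and set $\mc{Q}:=\tbmc{F}/\mc{G}$, which is torsion-free of rank one with $\mu_{H_N}(\mc{Q})<\mu_{H_N}(\tbmc{F})$. Its double dual $\mc{L}:=\mc{Q}^{\vee\vee}$ is a line bundle with $c_1(\mc{L})=c_1(\mc{Q})$, and dualising the short exact sequence $0\to\mc{G}\to\tbmc{F}\to\mc{Q}\to 0$ yields an injection $\mc{L}^{-1}\simeq\mc{Q}^\vee\hookrightarrow\tbmc{F}^\vee$. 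By Lemma \ref{lemtautdual} we have $\tbmc{F}^\vee\simeq\tb{(\mc{F}^\vee)}\otimes\mc{O}(\delta_3)$, so twisting by $\mc{O}(-\delta_3)$ produces an injection
\[\mc{L}^{-1}\otimes\mc{O}(-\delta_3)\hookrightarrow\tb{(\mc{F}^\vee)}.\]
A short slope computation, using $c_1(\tbmc{F}^\vee)=-c_1(\tbmc{F})$ and that the $\mc{O}(\delta_3)$ contributions appear symmetrically on both sides and cancel, shows that this rank one subsheaf is $\mu_{H_N}$-destabilising for $\tb{(\mc{F}^\vee)}$. Since $\mc{F}^\vee$ is again a line bundle with $\det\mc{F}^\vee\not\simeq\mc{O}_X$, Proposition \ref{propregsurflb} applied to $\mc{F}^\vee$ gives a contradiction for all $N\gg 0$. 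The final bound on $N$ is then the maximum of those produced by the rank one arguments for $\mc{F}$ and for $\mc{F}^\vee$.

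The main bookkeeping task is the slope computation under duality; it is short but one must track the twist by $\mc{O}(\delta_3)$ introduced by Lemma \ref{lemtautdual}. Once that cancellation is carried out, the argument is a clean two-step application of the rank one result already established in Proposition \ref{propregsurflb}.
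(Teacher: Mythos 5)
Your proposal is correct and follows essentially the same route as the paper: reduce to $\mc{F}$ locally free, rule out rank one destabilising subsheaves via Proposition \ref{propregsurflb}, and convert a rank two destabilising subsheaf into a rank one destabilising subsheaf of the dual, identified via Lemma \ref{lemtautdual} with $\tb{(\mc{F}^\vee)}\otimes\mc{O}(\delta_3)$. The paper states this in three lines; your version just spells out the slope bookkeeping (the cancellation of the $\mc{O}(\delta_3)$ twist) that the paper leaves implicit.
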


\begin{proof} As usual we can reduce to the case that $\mc{F}$ is locally free. We have seen that $\tbmc{F}$ cannot contain destabilising subsheaves of rank one. But any destabilising subsheaf of rank two yields a rank one destabilising subsheaf of the dual sheaf. Using Lemma \ref{lemtautdual} we are done.\end{proof}

\section{Abelian Surfaces and Generalised Kummer Varieties}\label{sectionabsurf}
In this Chapter we study the stability of tautological sheaves on Hilbert schemes of points on abelian surfaces and their restrictions to the associated generalised Kummer varieties.

\subsection{Geometric Considerations: $n=2$}\label{subsectionabsurfgeom2}
As explained already at the end of Section \ref{subsectiongeom}, for abelian surfaces $A$ the structure of the N\'{e}ron$-$Severi group of $\NS \tn{A}$ is more complicated as in the case of regular surfaces. In order to prove the stability of tautological sheaves nevertheless, we will restrict to the case of principally polarised abelian surfaces (p.p.a.s.) $(A,H)$ of Picard rank one. We begin with a technical lemma:

\begin{lem}\label{lemabsurfhom}
Let $A$ and $A'$ be complex tori. Then we have an isomorphism of abelian groups
\[\NS(A\times A')\cong \NS(A)\oplus\Hom(A',\widehat{A})\oplus \NS(A'),\]
where $\widehat{A}$ denotes the dual torus.
\end{lem}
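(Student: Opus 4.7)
The plan is to identify the Néron--Severi group of a complex torus $T$ with $H^{1,1}(T)\cap H^2(T,\ZZ)$ and then to combine the Künneth decomposition with the standard description of homomorphisms of complex tori via analytic/rational representations. I would proceed as follows.

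First I would recall that, via the exponential sequence, the first Chern class induces an identification
\[
\NS(T)\cong \im\bigl(c_1\colon \Pic(T)\to H^2(T,\ZZ)\bigr)=H^{1,1}(T)\cap H^2(T,\ZZ),
\]
so the problem becomes a purely cohomological one. I would then apply the Künneth formula
\[
H^2(A\times A',\ZZ)\cong H^2(A,\ZZ)\oplus \bigl(H^1(A,\ZZ)\otimes_\ZZ H^1(A',\ZZ)\bigr)\oplus H^2(A',\ZZ),
\]
which is compatible with the Hodge decomposition. Intersecting each summand with the $(1,1)$-part gives the direct sum decomposition
\[
\NS(A\times A')\cong \NS(A)\oplus N\oplus \NS(A'),
\]
where $N$ consists of those integral classes in $H^1(A,\ZZ)\otimes H^1(A',\ZZ)$ which lie in $H^{1,0}(A)\otimes H^{0,1}(A')\oplus H^{0,1}(A)\otimes H^{1,0}(A')$. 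The first two summands $\NS(A)$ and $\NS(A')$ are the images of the pullbacks along the two projections; they are injected by the Künneth splitting.

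The core of the proof is to identify $N$ with $\Hom(A',\widehat{A})$. For this I would use the description $H_1(\widehat{A},\ZZ)\cong H^1(A,\ZZ)$ together with the compatible identification $H_1(\widehat{A},\RR)\cong H^1(A,\RR)\cong H^{0,1}(A)$, the last isomorphism giving the complex structure on $\widehat{A}$. A homomorphism $A'\to \widehat{A}$ is by definition a $\ZZ$-linear map $\rho_r\colon H_1(A',\ZZ)\to H^1(A,\ZZ)$ whose $\RR$-linear extension is $\CC$-linear with respect to these complex structures. Writing this element of
\[
\Hom_\ZZ(H_1(A',\ZZ),H^1(A,\ZZ))\cong H^1(A',\ZZ)^\vee\otimes H^1(A,\ZZ)\subset H^1(A,\ZZ)\otimes H^1(A',\ZZ),
\]
the $\CC$-linearity condition translates, after decomposing $H^1(A,\CC)\otimes H^1(A',\CC)$ into Hodge summands, into the vanishing of the $(2,0)$ and $(0,2)$-components, i.e.\ into membership of $N$. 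This gives the desired isomorphism $\Hom(A',\widehat{A})\xrightarrow{\sim}N$. Explicitly, a homomorphism $f\colon A'\to\widehat{A}$ corresponds to the class $(\id_A\times f)^\star c_1(\mc{P}_A)$, where $\mc{P}_A$ is the Poincaré bundle on $A\times\widehat{A}$; this provides a geometric realisation of the isomorphism.

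The main obstacle is the last identification: carefully matching the $\CC$-linearity condition on the analytic/rational representations of a torus homomorphism with the Hodge-theoretic $(1,1)$-condition on the Künneth cross-term. Once the conventions on the complex structure of $\widehat{A}$ and on the pairing $H_1(\widehat{A},\ZZ)\cong H^1(A,\ZZ)$ are fixed, this reduces to a direct computation on the level of lattices and complex vector spaces. The Künneth splitting is canonical, so no compatibility issue arises in assembling the three pieces into a single isomorphism of abelian groups.
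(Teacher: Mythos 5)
Your argument is correct and follows essentially the same route as the paper: Künneth decomposition of $\cH^2(A\times A',\ZZ)$ intersected with the integral $(1,1)$-classes, with the cross-term identified with $\Hom(A',\widehat{A})$ via the standard equivalence between torus homomorphisms and morphisms of the weight-one integral Hodge structures (the paper phrases this as maps $\cH^1(\widehat{A},\ZZ)\rightarrow\cH^1(A',\ZZ)$, you as $\CC$-linear lattice maps, which is the same thing). Only a cosmetic slip: $\Hom_\ZZ(\cH_1(A',\ZZ),\cH^1(A,\ZZ))$ is $\cH^1(A',\ZZ)\otimes \cH^1(A,\ZZ)$, not $\cH^1(A',\ZZ)^\vee\otimes \cH^1(A,\ZZ)$.
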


\begin{proof} The proof of this lemma was pointed out to me by H.\ Ohashi. The K\"unneth formula yields a decomposition of $\NS(A\times A')$ into direct summands, two of which are naturally isomorphic to $\NS(A)$ and $\NS(A')$, respectively. The remaining summand can be written as
\begin{equation}\label{eqnsproduct}
\big{(}(\cH^{1,0}(A)\otimes \cH^{0,1}(A'))\oplus(\cH^{0,1}(A)\otimes \cH^{1,0}(A'))\big{)}\cap\big{(}\cH^1(A,\ZZ)\otimes \cH^1(A',\ZZ)\big{)}.
\end{equation}
We can interpret $\cH^{1,0}(A)\otimes \cH^{0,1}(A')$ as $\Hom (\cH^{0,1}(\widehat{A}),\cH^{0,1}(A'))$ and so we see that (\ref{eqnsproduct}) is just the set of morphisms of integral Hodge structures
\[\cH^1(\widehat{A},\ZZ)\rightarrow \cH^1(A',\ZZ).\qedhere\]
\end{proof}

Denote by $s\colon A\times A \rightarrow A$ the group law. An important role will play the class $H_M:=s^\star H-H^{\boxplus2},$ which is called the \em Mumford class \em associated with $H.$ By Lemma \ref{lemgeomhilb2linind} it is linearly independent from the summand $(\NS A)^{\boxplus2}$ inside $\NS (A\times A).$ We will restrict to a certain (large) set of p.p.a.s.:
\begin{eqnarray*}
A\text{ is a p.p.a.s.\ such that}\hspace{30pt}&\left(
\begin{array}{c}\NS A\cong\ZZ H\\
\text{and}\\
\NS(A\times A)\cong\NS A^{\boxplus2}\oplus\ZZ H_M.
\end{array}\right)&(\star)
\end{eqnarray*}

\begin{lem}
The class of abelian surfaces satisfying condition $(\star)$ is the complement of a countable union of analytic subsets in the moduli space $\mc{A}_2$ of principally polarised abelian surfaces.
\end{lem}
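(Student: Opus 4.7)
The plan is to reduce condition $(\star)$ to two standard Noether--Lefschetz type statements. First I would apply the preceding Lemma~\ref{lemabsurfhom} to rewrite
\[
\NS(A\times A)\cong \NS(A)\oplus\Hom(A,\widehat{A})\oplus \NS(A).
\]
Since $A$ is principally polarised, the polarisation induces an isomorphism $\phi_H\colon A\xrightarrow{\sim}\widehat{A}$, and composing with $\phi_H$ identifies $\Hom(A,\widehat{A})$ with $\End(A)$. Under this identification the Mumford class $H_M=s^\star H-H^{\boxplus2}$ corresponds (via the standard description of line bundles on $A\times A$ modulo pullbacks through the Poincar\'e bundle) to the polarisation map $\phi_H$ itself, i.e.\ to $\id_A\in\End(A)$. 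Thus condition $(\star)$ is equivalent to the pair of conditions
\[
\NS(A)=\ZZ\, H\qquad\text{and}\qquad\End(A)=\ZZ.
\]

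The second step is to show that each of these conditions cuts out the complement of a countable union of analytic subsets of $\mc{A}_2$. Passing if necessary to a finite cover to obtain a universal family $\pi\colon\mc{X}\to\mc{A}_2$, the sheaves $R^2\pi_\star\ZZ$ and the analogous sheaf on $\mc{X}\times_{\mc{A}_2}\mc{X}$ underlie variations of polarised Hodge structure. For any integral class $\gamma$ in a reference fibre which is not already an integer multiple of $H$ (respectively, after splitting off $\NS(A)^{\boxplus2}$ via the K\"unneth decomposition of the previous lemma, not an integer multiple of the class corresponding to $\id_A$), the Noether--Lefschetz locus
\[
\mathrm{NL}(\gamma)\;=\;\{t\in\mc{A}_2\mid \gamma_t\in F^1\cap H^2(\,\cdot\,,\ZZ)\}
\]
is a proper analytic subset of $\mc{A}_2$ by the theorem of Cattani--Deligne--Kaplan (or, more elementarily, by the classical argument that the condition $\gamma\in F^1$ is analytic and nontrivial on $\mc{A}_2$). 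Since the integral cohomology lattice is countable, the union over all such $\gamma$ (of which only countably many classes need be considered, after fixing monodromy orbits) is a countable union of proper analytic subsets.

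The complement of this countable union is exactly the locus on which $\NS(A)=\ZZ H$ and $\End(A)=\ZZ$, i.e.\ the locus where $(\star)$ holds. The main obstacle is bookkeeping: one must make sure that the decomposition in Lemma~\ref{lemabsurfhom} is compatible with the variation of Hodge structure, so that the Hodge classes controlling $\End(A)$ genuinely correspond to the middle summand $\Hom(A,\widehat{A})$ and are not confused with classes already accounted for by $\NS(A)^{\boxplus2}$. Once that is verified, standard Noether--Lefschetz theory finishes the proof.
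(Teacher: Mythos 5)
Your proposal is correct and follows essentially the same route as the paper: both use Lemma~\ref{lemabsurfhom} to reduce $(\star)$ to the two conditions $\NS(A)=\ZZ H$ and $\Hom(A,A)=\ZZ$ together with the key identification of the Mumford class $H_M$ with the identity endomorphism, which the paper carries out explicitly via $(\id_A\times\varphi_H)^\star\mc{P}\simeq s^\star\mc{O}(H)\otimes\mc{O}(-H)^{\boxtimes2}$. The only divergence is that the paper simply cites the standard facts that Picard rank one and $\Hom(A,A)=\ZZ$ hold for a very general principally polarised abelian surface, whereas you re-derive them by a Noether--Lefschetz/Hodge-locus argument; this is a legitimate, slightly more self-contained way to finish, but the substance of the proof is the same.
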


\begin{proof}It is well known that the set of abelian surfaces of Picard rank one is very general in $\mc{A}_2$ (cf.\ \cite[Exerc.\ 1a), p.\ 244]{BL92}). So we can assume $\NS A\cong\ZZ H$, where $H$ is a principal polarisation. For the second assertion note that we always have $\NS A^{\boxplus2}\subset\NS(A\times A)$ and there is at least one additional summand containing the class $H_M$. In order to show that for a very general abelian surface no more summands show up, we use Theorem 9.1 in \cite{BL92} (or Theorem 4.7.1 in \cite{Ara}) --- both stating that a very general $A$ satisfies $\Hom(A,A)\cong\ZZ$ --- and Lemma \ref{lemabsurfhom} above. Finally, let us show that the class $H_M$ is primitive. By Lemma \ref{lemabsurfhom} it is certainly enough to prove that $H_M$ corresponds to the identity in $\Hom(A,A).$ To the class $H$ we can associate the homomorphism \[\varphi_H\colon A\rightarrow\widehat{A},\quad x\mapsto t_x^\star\mc{O}(H)\otimes\mc{O}(-H),\]
which is an isomorphism since $H$ is a principal polarisation and it yields an identification $\Hom(A,A)\cong\Hom(A,\widehat{A}).$ Furthermore, on $A\times \widehat{A}$ we have the Poincar\'{e} line bundle $\mc{P}.$ Altogether we can describe the inclusion $\Hom(A,A)\hookrightarrow\NS(A\times A)$ as follows:
\[\begin{array}{ccccc}
\Hom(A,A)&\rightarrow&\Hom(A,\widehat{A})&\rightarrow&\NS(A\times A)\\
f&\mapsto& \varphi_H\circ f,\\
&&\varphi&\mapsto& c_1((\id_A\times\varphi)^\star\mc{P}).
\end{array}\]
Finally, equation $(9.8)$ in \cite[Chapt.\ 9]{Huy06} says that \[(\id_A\times\varphi_H)^\star\mc{P}\simeq s^\star\mc{O}(H)\otimes\mc{O}(-H)^{\boxtimes2}.\qedhere\]
\end{proof}

\begin{rem}\label{remabsurfgeom}
By the considerations on Page 198 in \cite{Huy06} we see that the first Chern class of $(\id_A\times\varphi_H)^\star\mc{P}$ is contained in the K\"unneth summand
$H^1(A,\ZZ)^{\boxtimes2}.$ Thus the identity
\[s^\star H=H^{\boxplus2}+H_M\]
is exactly the K\"unneth decomposition.

If $H$ is not a principal polarisation, the homomorphism $\varphi_H$ is no longer an isomorphism. Thus it is not clear if the class $H_M$ is primitive.
\end{rem}

\begin{cor}
If $A$ satisfies $(\star),$ we have
\begin{eqnarray*}
\NS\ta{A}\cong \ZZ H_{\ta{A}}\oplus\ZZ H_m\oplus\ZZ\delta.
\end{eqnarray*}
\end{cor}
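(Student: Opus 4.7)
The strategy is to identify $\NS \ta{A}$ with the $\Sy{2}$-invariant part of $\NS A^{[1,2]}$, where $A^{[1,2]}$ is the (for $n=2$) nested Hilbert scheme, i.e.\ the blow-up $\sigma_2\colon A^{[1,2]}\to A\times A$ of the diagonal carrying the natural double cover $\psi_2\colon A^{[1,2]}\to \ta{A}$ given by the swap action. The plan is then to exhibit $H_{\ta{A}}, H_m, \delta$ as classes whose pullbacks along $\psi_2$ yield a $\ZZ$-basis of those invariants.

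First I would compute the $\Sy{2}$-invariants in $\NS(A\times A)$ using $(\star)$. The swap permutes the two summands of $\NS A^{\boxplus 2}$ and fixes the Mumford class $H_M$, since both $s^\star H$ and $H^{\boxplus 2}$ are swap-symmetric, so the invariant subgroup is $\ZZ H^{\boxplus 2}\oplus \ZZ H_M$. As the exceptional divisor $D_2$ of $\sigma_2$ lies over the swap-fixed diagonal and is therefore itself swap-invariant, the equivariant blow-up formula gives
\[
\NS(A^{[1,2]})^{\Sy{2}} \;=\; \ZZ\,\sigma_2^\star H^{\boxplus 2}\oplus \ZZ\,\sigma_2^\star H_M\oplus \ZZ[D_2].
\]

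Next I would pull back the three candidate classes along $\psi_2$. Lemma \ref{eqhigherninductg} with $n=2$ gives $\psi_2^\star H_{\ta{A}}=\sigma_2^\star H^{\boxplus 2}$; Corollary \ref{corhigherninduct} with $n=2$ (noting that $\delta_1=0$ on $A^{[1]}=A$) gives $\psi_2^\star\delta=[D_2]$; and the identity $m_2\circ\psi_2=s\circ\sigma_2$, which is immediate from the definitions (an element $(\xi',\xi)$ of $A^{[1,2]}$ is sent to $\sum_{x\in\xi}x$ by both sides), implies $\psi_2^\star H_m=\psi_2^\star(m_2^\star H-H_{\ta{A}})=\sigma_2^\star s^\star H-\sigma_2^\star H^{\boxplus 2}=\sigma_2^\star H_M$. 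So the pullbacks of $H_{\ta{A}},H_m,\delta$ are exactly the three generators of $\NS(A^{[1,2]})^{\Sy{2}}$ produced above.

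Finally, the pullback map $\psi_2^\star\colon \NS \ta{A}\to \NS A^{[1,2]}$ is injective modulo torsion (as $\psi_2$ is a surjective generically finite morphism between smooth projective varieties, so composition with pushforward is multiplication by $\deg\psi_2=2$) and factors through $\NS(A^{[1,2]})^{\Sy{2}}$. Combined with the computation, its image equals this invariant subgroup, and so, up to torsion in $\NS \ta{A}$ (which does not occur for Hilbert schemes of abelian surfaces), $\{H_{\ta{A}}, H_m, \delta\}$ is a $\ZZ$-basis of $\NS \ta{A}$. The main subtlety is the last step: a priori $\psi_2^\star$ need only map into the invariants with finite-index image, but this is bypassed because a $\ZZ$-basis of the invariants is already realised by $\psi_2^\star$-pullbacks, so no index needs to be controlled by hand.
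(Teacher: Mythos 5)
Your proof is correct and takes essentially the same route as the paper: the paper's own proof is a two-line sketch that invokes $(\star)$ together with the fact that $\widetilde{A\times A}\xrightarrow{\psi}\ta{A}$ is the $\Sy{2}$-quotient and refers to \cite{Wan12} for the blow-up geometry, and your argument simply carries out in full the resulting computation of the $\Sy{2}$-invariants and of the pullbacks of $H_{\ta{A}}$, $H_m$ and $\delta$. The one point you leave as a bare assertion, the torsion-freeness of $\NS\ta{A}$, is likewise taken for granted in the paper.
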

\begin{proof}
For details of the geometry of the Hilbert schemes of two points on a surface we refer to Section 1 in \cite{Wan12}. The Corollary follows easily from the assumption $(\star)$ and the fact that $\widetilde{A\times A}\xrightarrow{\psi}\ta{A}$ is the $\Sy{2}$-quotient.
\end{proof}

We continue by deriving intersection numbers and slopes for the case $n=2.$

\begin{lem}\label{lemabsurfident2}
We have the following identities in $\cH^8(A\times A,\ZZ)\cong\ZZ$:
\begin{eqnarray}
s^\star H^2\cdot (H\otimes H)&=&4,\label{eqabsurfident}\\[4pt]
s^\star H^2\cdot (1\otimes H^2)=s^\star H^2\cdot (H^2\otimes 1)&=&4\quad\text{and}\notag\\[4pt]
s^\star H\cdot (H\otimes H^2)=s^\star H\cdot (H^2\otimes H)&=&4.\notag
\end{eqnarray}
\end{lem}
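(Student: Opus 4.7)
My plan is to work entirely in the K\"unneth decomposition of $\cH^\ast(A\times A,\ZZ)$, exploiting the identity
\[s^\star H = (H\otimes 1) + (1\otimes H) + H_M,\qquad H_M\in\cH^1(A,\ZZ)\otimes\cH^1(A,\ZZ)\]
recalled in Remark \ref{remabsurfgeom}. Since $H$ is a principal polarisation, Riemann--Roch on the abelian surface gives $\int_A H^2 = 2$ and consequently $\int_{A\times A}(H\otimes H)^2 = 4$. Throughout I will use the vanishing $\cH^k(A,\ZZ)=0$ for $k\geq 5$, which kills most cross-terms appearing in the expansions.

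The identities of the second and third lines of the lemma follow by straightforward expansion. For $s^\star H^2\cdot (1\otimes H^2)$, one expands $(s^\star H)^2 = ((H\otimes 1)+(1\otimes H)+H_M)^2$; every resulting term containing a factor $H_M$ or $(1\otimes H)^j$ with $j\geq 3$ lands in a K\"unneth component of degree $\geq 5$ on the second factor, hence vanishes. Only $(H\otimes 1)^2(1\otimes H)^2$ survives, integrating to $4$. The identity $s^\star H^2\cdot (H^2\otimes 1)=4$ is analogous, and the two third-line identities are even simpler: $(H\otimes 1 + 1\otimes H + H_M)\cdot (H\otimes H^2)$ reduces instantly to $H^2\otimes H^2$ because both $(1\otimes H)\cdot (H\otimes H^2)$ and $H_M\cdot (H\otimes H^2)$ vanish by degree.

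The remaining identity $s^\star H^2\cdot (H\otimes H) = 4$ requires genuine input. Expansion followed by the same degree vanishings gives
\[s^\star H^2\cdot (H\otimes H) = 2\,(H\otimes H)^2 + H_M^2\cdot (H\otimes H) = 8 + H_M^2\cdot (H\otimes H),\]
so the lemma reduces to showing $H_M^2\cdot (H\otimes H) = -4$; this is the main obstacle. I plan to derive it from two further facts: $s^\star(H^4) = 0$ because $H^4 = 0$ on the surface $A$, and $H_M^4 = 24$. For the latter, Remark \ref{remabsurfgeom} identifies $H_M$ as the first Chern class of $(\id_A\times\varphi_H)^\star\mc{P}$; since $\varphi_H$ is an isomorphism for a principal polarisation and $\chi(\mc{P}) = (-1)^g = 1$ classically, Riemann--Roch on the abelian fourfold (on which $\td=1$) gives $H_M^4 = 4!\cdot\chi = 24$. (Alternatively, $H_M^4=24$ can be verified directly in the K\"unneth algebra via a symplectic basis for $\cH^1(A,\ZZ)$.)

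To conclude, I would expand $0 = (s^\star H)^4 = ((H\otimes 1) + (1\otimes H) + H_M)^4$ by the multinomial theorem. Only three index triples produce classes of the top bidegree $(4,4)$, namely $(i,j,k)=(2,2,0),(1,1,2),(0,0,4)$, yielding
\[0 = \binom{4}{2,2,0}\cdot 4 + \binom{4}{1,1,2}\cdot H_M^2\cdot (H\otimes H) + \binom{4}{0,0,4}\cdot 24 = 48 + 12\,H_M^2\cdot (H\otimes H).\]
Hence $H_M^2\cdot (H\otimes H) = -4$, and substituting back gives $s^\star H^2\cdot (H\otimes H) = 4$, completing the plan.
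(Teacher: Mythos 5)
Your argument is correct, but it is genuinely different from the one in the paper. The paper proves these intersection numbers by observing that they are constant over the (connected) moduli space of p.p.a.s., specialising to $A=E\times E'$ a product of elliptic curves with $H$ represented by $E\times\{0\}+\{0\}\times E'$, translating the cycles to achieve transversality, and counting the four intersection points by hand. You instead stay on an arbitrary p.p.a.s.\ and work in the K\"unneth algebra: the second and third lines are indeed immediate bidegree bookkeeping, and for the first line you supply the missing input $H_M^2\cdot(H\otimes H)=-4$ via $H_M^4=c_1(\mc{P})^4=4!\,\chi(\mc{P})=24$ (using $\varphi_H$ an isomorphism and Mumford's computation $\chi(\mc{P})=(-1)^g$) together with $(s^\star H)^4=s^\star(H^4)=0$; I checked the multinomial count ($6\cdot4+12\cdot H_M^2(H\otimes H)+24=0$) and it is right. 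Note that this inverts the paper's logical order: the paper deduces $H_M^2\cdot(H\otimes H)=-4$ \emph{from} the lemma in Corollary \ref{corabsurfident}, whereas you establish it independently first --- there is no circularity, and your route has the advantage of avoiding the specialisation/deformation-invariance step and of making the role of the Poincar\'e bundle (already invoked in Remark \ref{remabsurfgeom}) explicit, at the cost of importing the nontrivial classical fact about $\cH^\ast(A\times\widehat{A},\mc{P})$. Two cosmetic quibbles only: $\int_A H^2=2$ should be stated as the standing normalisation making $(H\otimes H)^2=4$, and in your treatment of $s^\star H^2\cdot(1\otimes H^2)$ the vanishing criterion is really ``total second-factor degree exceeding $4$'' rather than the slightly garbled condition on powers of $1\otimes H$; neither affects the proof.
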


\begin{proof} The first equality is true for all p.p.a.s.\ $A$ if and only if it is true for one. Thus we may choose $A$ to be the product of two elliptic curves $E$ and $E'$ and we represent $H$ by $E\times\{0\}+\{0\}\times E'$. We shall prove the lemma by replacing this representation of the class $H$ by appropiate translates such that the intersection (\ref{eqabsurfident}) becomes transversal and then calculate the set-theoretic intersection. Thus let $x_1,x_2$ and $y_1,y_2$ be points on $E$ and $E',$ respectively. For all but a finite number of choices of these four points, the following intersection in $A\times A=(E\times E')\times (E\times E')$ is transversal:
\begin{align*}
\begin{array}{cl}
&s^{-1}(H+(x_1,y_1))\cap s^{-1}(H+(x_2,y_2))\cap\pi_1^{-1}H\cap\pi_2^{-1}H\\
=&\big{\{}(0,y_2,x_1,0),(0,y_1,x_2,0),(x_1,0,0,y_2),(x_2,0,0,y_1)\,\big{\}}.
\end{array}
\end{align*}
The other equalities can be proven in the same way.\end{proof}

\begin{cor}\label{corabsurfident}
We have
\begin{eqnarray}
H_M(H^2\otimes H)=H_M(H\otimes H^2)&=&0 \quad\text{ and}\label{eqabsurf0}\\[4pt]
(H_M)^2\cdot H\otimes H &=&-4.\notag
\end{eqnarray}
\end{cor}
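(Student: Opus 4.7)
The plan is to expand $H_M = s^\star H - H^{\boxplus 2} = s^\star H - (H\otimes 1) - (1\otimes H)$ and reduce everything to the intersection numbers supplied by Lemma \ref{lemabsurfident2}, using two elementary facts: first, $H^3 = 0$ in $\cH^\star(A,\ZZ)$ since $A$ is a surface; second, $H^2 = 2$ because $H$ is a principal polarisation (from Riemann--Roch on an abelian surface, $1 = h^0(\mc{O}(H)) = \chi(\mc{O}(H)) = H^2/2$). Consequently one has $H^2\otimes H^2 = 4$ and more generally all products of Künneth components of the form $H^a\otimes H^b$ with $a+b = 4$ evaluate to $4$ when $a,b\leq 2$, and to $0$ otherwise.

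For the first identity, I would write
\[H_M\cdot(H^2\otimes H) = s^\star H\cdot (H^2\otimes H) \;-\; (H\otimes 1)\cdot(H^2\otimes H) \;-\; (1\otimes H)\cdot(H^2\otimes H).\]
Lemma \ref{lemabsurfident2} gives $s^\star H\cdot(H^2\otimes H)=4$; the middle term is $H^3\otimes H = 0$; and the last term is $H^2\otimes H^2 = 4$. The three contributions combine to $4-0-4 = 0$. The identity with $H\otimes H^2$ follows by an identical computation since Lemma \ref{lemabsurfident2} is symmetric in the two factors.

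For the second identity, I would expand
\[(H_M)^2 = (s^\star H)^2 \;-\; 2\,s^\star H\cdot(H\otimes 1 + 1\otimes H) \;+\; \bigl(H^2\otimes 1 + 2\,H\otimes H + 1\otimes H^2\bigr),\]
and intersect with $H\otimes H$ term by term. Lemma \ref{lemabsurfident2} yields $s^\star H^2\cdot(H\otimes H) = 4$ and $s^\star H\cdot(H^2\otimes H) = s^\star H\cdot(H\otimes H^2) = 4$, while the polynomial part contributes $H^3\otimes H + 2\,H^2\otimes H^2 + H\otimes H^3 = 0 + 8 + 0 = 8$. Putting everything together gives
\[(H_M)^2\cdot(H\otimes H) \;=\; 4 - 2(4+4) + 8 \;=\; -4,\]
as required.

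The only real obstacle is bookkeeping of the Künneth expansion; no geometric input is needed beyond Lemma \ref{lemabsurfident2} and the principal polarisation assumption $(\star)$.
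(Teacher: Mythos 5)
Your proof is correct and takes essentially the same route as the paper: expand $H_M=s^\star H-H^{\boxplus2}$, intersect term by term, and feed in the numbers from Lemma \ref{lemabsurfident2} together with $H^2=2$ (hence $H^2\otimes H^2=4$) and $H^3=0$. The paper merely dismisses the first identity as following ``directly from the lemma above'' and carries out the identical expansion $4-2(4+4)+8=-4$ for the second, so the only difference is that you spell out the first computation and justify $H^2=2$ explicitly.
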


\begin{proof}
The first equality follows directly from the lemma above. Furthermore, we have
\[(H_M)^2=s^\star H^2-2s^\star H\cdot H^{\boxplus2}+H^2\otimes1+1\otimes H^2+2H\otimes H.\]
Intersecting with $H\otimes H$ yields
\[(H_M)^2\cdot H\otimes H=4-2\cdot2\cdot4+0+0+2\cdot 4=-4.\qedhere\]
\end{proof}

In the case of regular surfaces we used the polarisation $H_N:=NH_{\ta{X}}-\delta.$ The following lemma indicates that this polarisation is not the ideal choice in the abelian surface case. Note that in analogy to the definition of the Mumford class we defined
\[H_m:=m^\star H-H_{\ta{A}}.\]

\begin{lem}\label{lemabsurfslopedelta}
We have the following expansion:
\begin{eqnarray*}
H_m\cdot H_N^3=0+O(N^2).
\end{eqnarray*}
\end{lem}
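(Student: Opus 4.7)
The plan is to expand $H_N^3$ as a polynomial in $N$ and reduce the statement to a single coefficient. Writing
\[H_N^3=N^3H_{\ta{A}}^3-3N^2H_{\ta{A}}^2\cdot\delta+3NH_{\ta{A}}\cdot\delta^2-\delta^3,\]
all summands except the first have coefficient of order at most $N^2,$ so the lemma amounts to showing
\[H_m\cdot H_{\ta{A}}^3=0.\]

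To evaluate this I will transport everything to the blowup $\sigma\colon\widetilde{A\times A}\rightarrow A\times A$ along the degree-two $\mathfrak{S}_2$-quotient $\psi\colon\widetilde{A\times A}\rightarrow\ta{A}.$ For $H_{\ta{A}},$ the recursion of Lemma~\ref{eqhigherninductg} (with $n=2$) immediately gives $\psi^\star H_{\ta{A}}=\sigma^\star H^{\boxplus2}.$ For $H_m=m^\star H-H_{\ta{A}}$ I use the standard factorisation of the Hilbert--Chow morphism: $\rho\circ\psi=\pi\circ\sigma,$ where $\pi\colon A\times A\rightarrow S^2A$ is the $\mathfrak{S}_2$-quotient. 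Combined with $m=\tilde{s}\circ\rho$ and $\tilde{s}\circ\pi=s,$ this yields $m\circ\psi=s\circ\sigma,$ so that $\psi^\star H_m=\sigma^\star(s^\star H-H^{\boxplus2})=\sigma^\star H_M.$ Since $\psi$ has generic degree two and $\sigma$ is birational, pushforward of top-degree classes gives
\[2\,H_m\cdot H_{\ta{A}}^3=\sigma^\star H_M\cdot(\sigma^\star H^{\boxplus2})^3=H_M\cdot(H^{\boxplus2})^3,\]
the latter intersection now being computed on $A\times A.$

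It remains to evaluate $H_M\cdot(H^{\boxplus2})^3.$ Since $\dim A=2,$ one has $H^3=0,$ and the K\"unneth expansion collapses to $(H^{\boxplus2})^3=3(H^2\otimes H+H\otimes H^2).$ Both summands are annihilated by $H_M$ by the first identity of Corollary~\ref{corabsurfident}, and the vanishing of the leading coefficient follows.

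The only mildly delicate point is the geometric identification $m\circ\psi=s\circ\sigma,$ which is the bridge that matches the ``abstract'' polarisation summand $H_m$ on $\ta{A}$ with the Mumford class $H_M$ on $A\times A;$ once this is in place, everything reduces to routine K\"unneth bookkeeping together with the previously established vanishings $H_M\cdot(H^2\otimes H)=H_M\cdot(H\otimes H^2)=0.$
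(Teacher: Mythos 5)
Your argument is correct and follows essentially the same route as the paper: pull back along the double cover $\psi$ and the blowup $\sigma$, identify $\psi^\star H_{\ta{A}}=\sigma^\star H^{\boxplus2}$ and $\psi^\star H_m=\sigma^\star H_M$, reduce the leading $N^3$-coefficient to $H_M\cdot(H^{\boxplus2})^3=3H_M\cdot(H^2\otimes H+H\otimes H^2)$, and conclude by the vanishing in Corollary~\ref{corabsurfident}. The only difference is that you spell out the justification of $\psi^\star H_m=\sigma^\star H_M$ via $m\circ\psi=s\circ\sigma$, which the paper simply asserts.
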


\begin{proof} We pullback along the double cover $\psi\colon\widetilde{X\times X}\rightarrow \ta{X}.$ Note that $\psi^\star H_{\ta{X}}=\sigma^\star H^{\boxplus2}$ and $\psi^\star H_m=\sigma^\star H_M.$ We have
\begin{eqnarray*}
\begin{array}{lclcl}
H_m\cdot H_N^3&=&\frac{1}{2}\sigma^\star H_M\cdot(\sigma^\star H_N)^3&=&\frac{1}{2}\sigma^\star(H_M\cdot(H^{\boxplus 2})^3) + O(N^2)\\[4pt]
&&&=&\frac{1}{2}H_M(3H^2\otimes H+3H\otimes H^2)+O(N^2).
\end{array}
\end{eqnarray*}
Now we use (\ref{eqabsurf0}) in Corollary \ref{corabsurfident} and we are done.\end{proof}

Thus the leading term in the expansion of the slope of a line bundle with first Chern class $H_m$ is zero.

In Lemma \ref{lempolaabsurfslope} we defined the polarisation $H^m_N:=NH_{\ta{X}}-\delta+Nm^\star H$. With respect to this polarisation the slope of line bundles with first Chern class $m^\star H$ does not vanish:

\begin{lem}\label{lemabsurf2slope}
We have the following expansions:
\begin{eqnarray*}
(H_N^m)^3H_{\ta{A}}&=& 72N^3 + O(N^2),\\[4pt]
(H_N^m)^3H_m&=& -36N^3 + O(N^2)\quad\text{and}\\[4pt]
(H_N^m)^3m^\star H&=&36N^3+O(N^2).
\end{eqnarray*}
\end{lem}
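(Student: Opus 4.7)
The plan is to mimic the strategy of the proof of Lemma~\ref{lemabsurfslopedelta}: pull everything back along the degree-$2$ covering $\psi\colon\widetilde{A\times A}\to\ta{A}$ and reduce the computation to an intersection calculation on $A\times A$, where Lemma~\ref{lemabsurfident2} and Corollary~\ref{corabsurfident} already give us all the input we need.

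The first observation is that since $H_N^m=N(H_{\ta{A}}+m^\star H)-\delta$, we have
\[(H_N^m)^3 \;=\; N^3(H_{\ta{A}}+m^\star H)^3 \;+\; O(N^2).\]
Intersecting with any class independent of $N$ preserves this expansion, so the only task is to compute the three intersection numbers $(H_{\ta{A}}+m^\star H)^3\cdot\xi$ for $\xi\in\{H_{\ta{A}},H_m,m^\star H\}$. Pulling back via $\psi$ and using, from the proof of Lemma~\ref{lemabsurfslopedelta}, the identities $\psi^\star H_{\ta{A}}=\sigma^\star H^{\boxplus 2}$, $\psi^\star H_m=\sigma^\star H_M$ (and hence $\psi^\star m^\star H=\sigma^\star s^\star H=\sigma^\star(H^{\boxplus 2}+H_M)$), we obtain
\[\psi^\star(H_{\ta{A}}+m^\star H)\;=\;\sigma^\star(2H^{\boxplus2}+H_M).\]
Since $\psi$ has degree $2$ and $\sigma$ is birational, every top intersection on $\ta{A}$ equals $\tfrac{1}{2}$ times the corresponding intersection of the $\sigma^\star$-classes on $A\times A$.

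The next step is to assemble, on $A\times A$, the four basic numbers $(H^{\boxplus2})^{a}\,H_M^{b}$ with $a+b=4$. The class $(H^{\boxplus2})^4$ equals $\binom{4}{2}(H^2)^2=24$ by the K\"unneth expansion and the fact that $H$ is a principal polarisation (so $H^2=2$). The mixed numbers $(H^{\boxplus2})^{3}H_M=0$ and $(H^{\boxplus2})^{2}H_M^{2}=-8$ follow directly from Corollary~\ref{corabsurfident} after writing $(H^{\boxplus2})^3=3(H^2\otimes H+H\otimes H^2)$ and $(H^{\boxplus2})^2=H^2\otimes 1+2H\otimes H+1\otimes H^2$. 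For $H^{\boxplus2}H_M^{3}$ and $H_M^{4}$ one expands $H_M^k=(s^\star H-H^{\boxplus2})^k$ and uses that $(s^\star H)^k=s^\star(H^k)=0$ for $k\ge 3$ since $H^k=0$ on the surface $A$; combined with the values of $s^\star H^2\cdot H^{\boxplus2}$ and $s^\star H\cdot (H^{\boxplus2})^2$ provided by Lemma~\ref{lemabsurfident2}, this gives $H^{\boxplus2}H_M^3=0$ and $H_M^4=24$.

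Finally, plug these numbers into the binomial expansion
\[(2H^{\boxplus2}+H_M)^3=8(H^{\boxplus2})^3+12(H^{\boxplus2})^2H_M+6H^{\boxplus2}H_M^2+H_M^3\]
and intersect successively with $H^{\boxplus2}$, $H_M$, and $H^{\boxplus 2}+H_M$. This gives $144$, $-72$, $72$ respectively; dividing by the degree $2$ of $\psi$ yields $72$, $-36$, $36$, which matches the claim after multiplying by $N^3$. The main obstacle is purely bookkeeping, namely keeping track of the four mixed intersection numbers $(H^{\boxplus2})^{a}H_M^{b}$; once $(s^\star H)^3=0$ is invoked no further geometric input beyond Lemma~\ref{lemabsurfident2} and Corollary~\ref{corabsurfident} is required.
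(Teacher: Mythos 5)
Your proof is correct and follows essentially the same route as the paper: pull back along the degree-two cover $\psi$, reduce to intersection numbers on $A\times A$ supplied by Lemma~\ref{lemabsurfident2} and Corollary~\ref{corabsurfident}, and divide by the covering degree; the only difference is that you expand the cube in the basis $\{H^{\boxplus2},H_M\}$ whereas the paper keeps $s^\star H$ and the K\"unneth components of $H^{\boxplus2}$, and both bookkeepings land on $144$, $-72$, $72$ before halving. One small caveat: $(H^{\boxplus2})^2H_M^2=-8$ does not follow from Corollary~\ref{corabsurfident} alone --- you also need $H_M^2\cdot(H^2\otimes1)=H_M^2\cdot(1\otimes H^2)=0$, which must be obtained by the same expansion from Lemma~\ref{lemabsurfident2} that you already carry out for $H^{\boxplus2}H_M^3$ and $H_M^4$ --- but all your final numbers check out.
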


\begin{proof} First observe that by definition
\[(H_N^m)^3m^\star H=(H_N^m)^3H_{\ta{A}}+(H_N^m)^3H_m.\]
Therefore, we will only prove the first two equalities. Note that we have $H^3\otimes 1=1\otimes H^3=0=s^\star H^3.$ We write down the expansion of $\psi^\star(H_N^m)^3$:
\begin{eqnarray}
\psi^\star(H_N^m)^3 &=&N^3\psi^\star(H_{\ta{X}}+m^\star H)^3+O(N^2)\notag\\[4pt]
&=& N^3\sigma^\star\big{(}3(H^2\otimes H + H\otimes H^2) + 3s^\star H (H^2\otimes 1+2H\otimes H+1\otimes H^2) \notag\\[4pt]
&& + 3s^\star H^2 (H\otimes1 + 1\otimes H)\big{)}+\, O(N^2).\label{eqabsurfgeomhn3}
\end{eqnarray}
Using Lemma \ref{lemabsurfident2}, we have
\begin{eqnarray*}
\psi^\star((H_N^m)^3H_{\ta{A}})&=&\psi^\star(H_N^m)^3\sigma^\star H^{\boxplus2}\\[4pt]
&=&N^3\big{(}6H^2\otimes H^2+18s^\star H(H^2\otimes H)+6s^\star H^2(H^2\otimes 1)\\[4pt]
&&+\,6s^\star H^2(H\otimes H)\big{)}+O(N^2)\\[4pt]
&=&N^3(6\cdot4+18\cdot4+6\cdot4+6\cdot4)+O(N^2)\\[4pt]
&=&144N^3+O(N^2).
\end{eqnarray*}
For the second equality we use Corollary \ref{corabsurfident}: We do not have to consider the term $3(H^2\otimes H+H\otimes H^2)$ in the expansion (\ref{eqabsurfgeomhn3}). Thus we have
\begin{eqnarray*}
\psi^\star((H_N^m)^3H_m)&=&\psi^\star(H_N^m)^3\sigma^\star(s^\star H-H^{\boxplus2})\\[4pt]
&=&N^3\big{(}3s^\star H^2(H^2\otimes 1+2H\otimes H+1\otimes H^2)-18s^\star H(H\otimes H^2)-6s^\star H^2(H\otimes H)\\[4pt]
&&-\,6s^\star H^2(H^2\otimes 1)\big{)}+O(N^2)\\[4pt]
&=&N^3(3\cdot(4+2\cdot4+4)-18\cdot4-6\cdot4-6\cdot4+O(N^2)\\[4pt]
&=&-\,72N^3+O(N^2).\hspace{235pt}\qedhere
\end{eqnarray*}
\end{proof}

\subsection{Geometric Considerations: $n=3$}\label{subsectionabsurfgeom3}
Now we turn to the case $n=3$ which is not essentially different from the case $n=2$ but the calculations are somewhat more difficult. We still assume that $(A,H)$ is an abelian surface satisfying $(\star).$ 
We denote the projections $A^3\rightarrow A$ by $\pi_i,$ $i=1,2,3$ and the projections $A^3\rightarrow A^2$ by $p_{jk},$ $1\leq j<k\leq 3.$
If $A$ satisfies $(\star),$ a similar analysis as in Lemma \ref{lemabsurfhom} yields
\begin{equation}\label{eqabsurfgeomnsa3}
\NS A^3\cong \bigoplus_{i=1}^3\ZZ\pi_i^\star H\oplus \bigoplus_{1\leq j<k\leq3}\ZZ p_{jk}^\star H
\end{equation}
Note that no more complicated effects occur since the N\'{e}ron$-$Severi group is a subgroup of the second (!) cohomology. An important class in $\NS A^3$ is $H_M.$ We need the following lemma to write this class according to decomposition (\ref{eqabsurfgeomnsa3}).

\begin{lem}\label{lemabsurfgeoms3}
We have
\[s_3^\star H = -\sum_{i=1}^3\pi_i^\star H + \sum_{1\leq j<k\leq3}p_{jk}^\star s^\star H.\]
\end{lem}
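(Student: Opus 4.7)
The plan is to invoke the Theorem of the Cube, which is the natural tool for expressing the pullback of a line bundle along a sum of morphisms to an abelian variety. Recall that for any line bundle $L$ on an abelian variety $B$ and any three morphisms $f_1,f_2,f_3\colon Y\to B$, the Theorem of the Cube yields the identity
\[
(f_1+f_2+f_3)^\star L=\sum_{1\leq j<k\leq 3}(f_j+f_k)^\star L-\sum_{i=1}^3 f_i^\star L
\]
in $\Pic Y$ (written additively), and a fortiori in $\NS Y$.

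I would apply this with $Y=A^3$, $B=A$, $L=\mc{O}(H)$, and $(f_1,f_2,f_3)=(\pi_1,\pi_2,\pi_3)$. Under these choices one has $f_1+f_2+f_3=s_3$ and $f_j+f_k=s\circ p_{jk}$, so the cube identity rewrites as
\[
s_3^\star H=\sum_{1\leq j<k\leq 3}p_{jk}^\star s^\star H-\sum_{i=1}^3\pi_i^\star H
\]
in $\NS A^3$, which is precisely the statement of the lemma.

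There is essentially no obstacle: once the Theorem of the Cube is invoked, the proof reduces to the bookkeeping identifications $\pi_j+\pi_k=s\circ p_{jk}$ and $\pi_1+\pi_2+\pi_3=s_3$. A more hands-on alternative would iterate the two-summand decomposition $s^\star H=H^{\boxplus2}+H_M$ by writing $s_3=s\circ(s\times\id_A)\circ\tau$ for a suitable reshuffling $\tau$, but this route circles back to pullbacks of the Mumford class $H_M$ under pairs of morphisms $Y\to A$, which is itself a manifestation of the Theorem of the Square. Invoking the cubic identity directly is therefore the cleanest path, and the uniqueness of expression in the basis (\ref{eqabsurfgeomnsa3}) makes any further coefficient-matching superfluous.
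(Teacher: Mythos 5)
Your proof is correct, and it takes a genuinely different route from the paper. The paper treats the coefficients as unknowns: it observes that $s_3^\star H$ is $\Sy{3}$-invariant, writes it as $a\sum_i\pi_i^\star H+b\sum_{j<k}p_{jk}^\star s^\star H$ using the decomposition of $\NS A^3$ available under the hypothesis $(\star)$, and then pins down $a=-1$, $b=1$ by intersecting with two explicit curve classes (one of which involves counting the $16$ two-torsion points, yielding the linear system $1=a+2b$, $16=2a+18b$). Your argument instead invokes the standard corollary of the Theorem of the Cube for three morphisms $f,g,h\colon Y\to B$, which with $f_i=\pi_i$, $f_i+f_j=s\circ p_{ij}$ and $f_1+f_2+f_3=s_3$ gives the identity immediately. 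This buys two things: the identity is obtained at the level of line bundles (hence in $\Pic A^3$, not just $\NS A^3$), and it holds for an arbitrary abelian surface (indeed any abelian variety and any $L$), with no appeal to $(\star)$, to Picard rank one, or to the $\Sy{3}$-invariance and basis argument. The paper's computation is more self-contained in that it only uses elementary intersection theory, but it is logically tied to the assumption $(\star)$ through the ansatz for the coefficients; your version removes that dependence and also renders the coefficient-matching unnecessary.
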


\begin{proof}
Certainly $s_3^\star H$ is $\Sy{3}$-invariant. Hence we can write
\[s_3^\star H=a\sum_{i=1}^3\pi_i^\star H + b\sum_{1\leq j<k\leq3}p_{jk}^\star s^\star H,\quad a,b\in\ZZ.\]
We will intersect with different curve classes to determine $a$ and $b.$ Fix points $x_0$ and $y_0$ in $A$ and intersect with the class $l_1:=\{x_0\}\times\{y_0\}\times H.$ We have $s_3^\star H\cdot l_1=\pi_3^\star H\cdot l_1=p_{13}^\star s^\star H\cdot l_1=p_{23}^\star s^\star H\cdot l_1=H^2$ and all other intersections vanish. This yields $1=a+2b.$
Next, we intersect with the class $l_2:=\{x_0\}\times \Delta_\star H.$ We have $\pi_2^\star H\cdot l_2=\pi_2^\star H\cdot l_2=p_{12}^\star s^\star H\cdot l_2=p_{13}^\star s^\star H\cdot l_2=H^2$ but $s_3^\star H\cdot l_2$ consists of triples $(x,y,z)$ such that $x=x_0,$ $y=z\in H$ and $x_0+2z\in H$. For a general $x_0$ this gives $H^2$ multiplied by the number of two-torsion points, which is $16.$ Furthermore, we get the same number for $p_{23}^\star m^\star H\cdot l_2$ and the remaining term vanishes. Altogether we get the following system of linear equations:
\begin{eqnarray*}
1&=&a+2b\\
16&=&2a+18b,
\end{eqnarray*}
which implies $a=-1,$ $b=1.$
\end{proof}

\begin{cor}\label{corabsurfgeom3}
We have
\[H_{M_3}=\sum_{1\leq j<k\leq3} p_{jk}^\star H_M.\]
\end{cor}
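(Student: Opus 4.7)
The plan is to unfold both sides using the definitions and Lemma \ref{lemabsurfgeoms3}, and then match terms via a simple book-keeping argument on which projections factor through which $p_{jk}$.

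First, I would write out the left-hand side:
\[H_{M_3} = s_3^\star H - H^{\boxplus 3} = s_3^\star H - \sum_{i=1}^3 \pi_i^\star H,\]
and substitute the formula from Lemma \ref{lemabsurfgeoms3} for $s_3^\star H$ to obtain
\[H_{M_3} = -2\sum_{i=1}^3 \pi_i^\star H + \sum_{1\leq j<k\leq 3} p_{jk}^\star s^\star H.\]

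Next, I would unfold the right-hand side using $H_M = s^\star H - H^{\boxplus 2}$ on $A^2$. The key observation is that if $\mathrm{pr}_1,\mathrm{pr}_2\colon A^2\to A$ are the two projections, then $\mathrm{pr}_1\circ p_{jk} = \pi_j$ and $\mathrm{pr}_2\circ p_{jk} = \pi_k$, so
\[p_{jk}^\star H^{\boxplus 2} = \pi_j^\star H + \pi_k^\star H.\]
Summing over the three pairs $(j,k)$, each index $i\in\{1,2,3\}$ occurs in exactly two pairs, which gives
\[\sum_{1\leq j<k\leq 3} p_{jk}^\star H^{\boxplus 2} = 2\sum_{i=1}^3 \pi_i^\star H.\]
Therefore
\[\sum_{1\leq j<k\leq 3} p_{jk}^\star H_M = \sum_{1\leq j<k\leq 3} p_{jk}^\star s^\star H - 2\sum_{i=1}^3 \pi_i^\star H,\]
which matches the expression for $H_{M_3}$ derived above.

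There is no real obstacle here; this is a bookkeeping corollary of the preceding lemma, and the only point that requires a sentence of justification is the combinatorial fact that each $\pi_i^\star H$ appears twice in $\sum_{j<k} p_{jk}^\star H^{\boxplus 2}$. I would present the proof as a two-line calculation with one remark identifying $\mathrm{pr}_\ell\circ p_{jk}$ with the corresponding $\pi_i$.
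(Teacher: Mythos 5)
Your proof is correct and follows essentially the same route as the paper: both simply unfold $H_{M_3}=s_3^\star H-H^{\boxplus3}$ and $H_M=s^\star H-H^{\boxplus2}$, apply Lemma \ref{lemabsurfgeoms3}, and use the bookkeeping fact that each index occurs in two of the three pairs $(j,k)$. The only difference is presentational: you spell out the combinatorial cancellation that the paper's one-line proof leaves implicit.
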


\begin{proof}
We just plug in the definition of $H_M$ from (\ref{eqgeomhilbnmn}) at the end of Section \ref{subsectiongeomhilbn}. We see that $s_3^\star= \sum_{i=1}^3\pi_i^\star H + \sum_{1\leq j<k\leq3}p_{jk}^\star  H_M.$ 
\end{proof}

From (\ref{eqabsurfgeomnsa3}) and Corollary \ref{corabsurfgeom3} we deduce
\begin{eqnarray*}
\NS\tb{A}&\cong& \ZZ H_{\tb{A}}\oplus \ZZ H_{m_3}\oplus \ZZ \delta_3.
\end{eqnarray*}

Recall that we defined our polarisation $H_N^m$ as follows:
\begin{eqnarray*}
H_N^m:=N(H_{\tb{A}}+m_3^\star H)-\delta_3.
\end{eqnarray*}
In order to calculate slopes of sheaves on $\tb{A}$, we need to have an expansion of certain intersection products in terms of $N$.

\begin{lem}\label{lemabsurfgeom5}
We have:
\begin{eqnarray}\label{eqabsurf3slope}
(H_N^m)^5\cdot H_{\tb{A}}&=&3\cdot(H_N^m)^5\cdot m_3^\star H + O(N^4).
\end{eqnarray}
\end{lem}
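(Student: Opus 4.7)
The plan is to reduce the identity to an intersection calculation on $A^3$ via the composition of the birational Hilbert$-$Chow morphism $\rho_3\colon\tb{A}\to S^3A$ with the degree $6$ quotient $f_3\colon A^3\to S^3A$. Both $H_{\tb{A}}$ and $m_3^\star H$ are pullbacks under $\rho_3$ of classes on $S^3A$, so intersection products on $\tb{A}$ involving only these two classes coincide with those on $S^3A$, which in turn equal $\tfrac{1}{6}$ of the corresponding products of $H^{\boxplus3}$ and $s_3^\star H$ on $A^3$.

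The first simplification is that $m_3$ factors through the surface $A$, so $(m_3^\star H)^k=m_3^\star(H^k)=0$ for $k\geq 3$. Writing $(H^m_N)^5=N^5(H_{\tb{A}}+m_3^\star H)^5+O(N^4)$ (the remainder absorbing the $\delta_3$-contributions), binomial expansion yields
\begin{align*}
(H^m_N)^5\cdot H_{\tb{A}}&=N^5\bigl(H_{\tb{A}}^6+5\,H_{\tb{A}}^5\,m_3^\star H+10\,H_{\tb{A}}^4\,(m_3^\star H)^2\bigr)+O(N^4),\\
(H^m_N)^5\cdot m_3^\star H&=N^5\bigl(H_{\tb{A}}^5\,m_3^\star H+5\,H_{\tb{A}}^4\,(m_3^\star H)^2\bigr)+O(N^4),
\end{align*}
and the claim is equivalent to the single numerical identity
\[H_{\tb{A}}^6+2\,H_{\tb{A}}^5\,m_3^\star H\,=\,5\,H_{\tb{A}}^4\,(m_3^\star H)^2.\]

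I would then compute the three top intersection numbers one by one on $A^3$. For $H_{\tb{A}}^6=\tfrac{1}{6}(H^{\boxplus3})^6$ only the multinomial type $(2,2,2)$ contributes, giving $\tfrac{1}{6}\binom{6}{2,2,2}(H^2)^3=120$. For $H_{\tb{A}}^5\cdot m_3^\star H$ I would decompose $s_3^\star H=-\sum_i\pi_i^\star H+\sum_{j<k}p_{jk}^\star s^\star H$ via Lemma~\ref{lemabsurfgeoms3}; the diagonal terms $\pi_i^\star H$ reduce again to pure multinomial counts, while the cross terms $p_{jk}^\star H_M$ vanish automatically, because $H_M$ lies in the $\cH^1(A)\otimes\cH^1(A)$-K\"unneth summand of $A\times A$ (Remark~\ref{remabsurfgeom}) and so K\"unneth parity on $A^3$ forbids a nonzero top-dimensional contribution when paired with $(H^{\boxplus3})^5$. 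This yields $H_{\tb{A}}^5\cdot m_3^\star H=120$. For $H_{\tb{A}}^4\cdot(m_3^\star H)^2$ the cleanest route is to rewrite $(s_3^\star H)^2=s_3^\star H^2=2\,s_3^\star[\mathrm{pt}]$ and restrict $(H^{\boxplus3})^4$ to the fibre $s_3^{-1}(0)\cong A^2$, parameterised by $(a_1,a_2)\mapsto(a_1,a_2,-a_1-a_2)$, on which $\pi_3^\star H$ becomes $s^\star H$ (since $[-1]^\star=\id$ on $\NS A$). The calculation then reduces to $(H^{\boxplus2}+s^\star H)^4$ on $A^2$, which is handled directly by Lemma~\ref{lemabsurfident2} and Corollary~\ref{corabsurfident}, giving $H_{\tb{A}}^4\cdot(m_3^\star H)^2=72$. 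Substituting, $120+2\cdot120=360=5\cdot72$, as required.

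The main obstacle is the bookkeeping in the third intersection number, where all three factor pairs interact through the Mumford classes and mixed $s^\star H$-intersections. The K\"unneth-parity argument for $H_M$ cuts down most cross terms; once that is in place, everything reduces to the $n=2$ intersection numbers already recorded in Lemma~\ref{lemabsurfident2} and Corollary~\ref{corabsurfident}, leaving only routine combinatorial arithmetic.
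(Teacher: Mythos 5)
Your proposal is correct, and its skeleton is the same as the paper's: expand $(H_N^m)^5=N^5(H_{\tb{A}}+m_3^\star H)^5+O(N^4)$, use $(m_3^\star H)^j=0$ for $j\geq 3$, and reduce everything to the three top intersection numbers $H_{\tb{A}}^6$, $H_{\tb{A}}^5\,m_3^\star H$, $H_{\tb{A}}^4(m_3^\star H)^2$ computed by pulling back to $A^3$. Your values $120$, $120$, $72$ agree with the paper's $90(H^2)^3$, $90(H^2)^3$, $54(H^2)^3$ once you account for the factor $\tfrac{1}{6}$ (which the paper deliberately drops) and $(H^2)^3=8$, and the identity $120+2\cdot120=5\cdot72$ is exactly equivalent to the paper's $1080=3\cdot360$. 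Where you genuinely diverge is in the evaluation of the two nontrivial numbers. The paper introduces the auxiliary class $H_s=m_3^\star H+H_{\tb{A}}$, whose image on $A^3$ is $\sum_{j<k}p_{jk}^\star s^\star H$, grinds out $H_{\tb{A}}^5H_s$ and $H_{\tb{A}}^4H_s^2$ by multinomial expansion against Lemma \ref{lemabsurfident2}, and then resubstitutes $m_3^\star H=H_s-H_{\tb{A}}$. You instead expand $s_3^\star H$ directly via Lemma \ref{lemabsurfgeoms3}, kill all $p_{jk}^\star H_M$ cross terms at once by the K\"unneth parity observation (each monomial of $(H^{\boxplus3})^5$ sits in even K\"unneth degree on every factor, while $p_{jk}^\star H_M$ contributes odd degree on two factors, so no top class can arise), and for the quadratic term use $(s_3^\star H)^2=s_3^\star H^2=2[s_3^{-1}(0)]$ together with the restriction of $(H^{\boxplus3})^4$ to the fibre $s_3^{-1}(0)\cong A^2$, where $\pi_3^\star H$ becomes $s^\star H$ since $[-1]^\star=\id$ on $\NS A$. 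Both of these devices are sound and they buy a real reduction in bookkeeping: the parity argument replaces several case-by-case vanishing checks, and the fibre restriction turns the six-fold product on $A^3$ into the four-fold product $(H^{\boxplus2}+s^\star H)^4=24+4\cdot24+6\cdot16=216$ on $A^2$, already covered by Lemma \ref{lemabsurfident2}. The paper's route, by contrast, is more mechanical but stays entirely within one uniform computation on $A^3$.
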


\begin{proof} We have
\begin{eqnarray*}
(H_N^m)^5&=&N^5\big{(}H_{\tb{A}}^5+5H_{\tb{A}}^4m_3^\star H+10H_{\tb{A}}^3(m_3^\star H)^2\big{)}+O(N^4)
\end{eqnarray*}
and we therefore need to compute all terms of the form $H_{\tb{A}}^i(m_3^\star H)^j$ with $i+j=6,$ $j\leq 2.$ (Note that $(m_3^\star H)^j$ vanishes for $j>2.$) All the divisors involved are actually pullbacks along the Hilbert$-$Chow morphism $\rho$ and we can therefore compute them on $S^3A$ or, even more easily, their pullbacks on $A^3$. Since we are not interested in the exact value but only want to compare both sides of (\ref{eqabsurf3slope}), we do not care about scaling factors like the factor $6$ when we pull back to $A^3.$ To make computations easier we define the class
\[H_s:=m_3^\star H+H_{\tb{A}}.\]
The images of the classes $H_{\tb{A}}$ and $H_s$ on $A^3$ are $H^{\boxplus 3}$ by Lemma \ref{eqhigherninductg} and $\sum_{1\leq j<k\leq3}p_{jk}^\star s^\star H$ by Lemma \ref{lemabsurfgeoms3}, respectively. For degree reasons many terms vanish. We use the fact that
\[s^\star H=H^{\boxplus2}+H_M\]
is the decomposition into K\"unneth factors (cf.\ Remark \ref{remabsurfgeom}). We use Lemma \ref{lemabsurfident2} to compute the remaining terms:

\begin{eqnarray}
H_{\tb{A}}^6&=&\binom{6}{2,2,2}\cdot \pi_1^\star H^2\cdot \pi_2^\star H^2\cdot \pi_3^\star H^2=90(H^2)^3,\notag\hspace{160pt}\\[6pt]
H_{\tb{A}}^5H_s&=&3\cdot2\cdot\binom{5}{2,2,1}\cdot(p_{12}^\star s^\star H\cdot\pi_1^\star H\cdot\pi_2^\star H^2\cdot\pi_3^\star H^2)\notag\\[6pt]
&=&3\cdot2\cdot30(H^2)^3=180(H^2)^3,\notag
\end{eqnarray}
\begin{eqnarray}
H_{\tb{A}}^4H_s^2 &=&3\cdot(H^{\boxplus3})^4\cdot p_{12}^\star s^\star H^2+6\cdot(H^{\boxplus3})^4\cdot p_{12}^\star s^\star H\cdot p_{23}^\star s^\star H\notag\\[6pt]
&=&3\cdot\Big{(}2\cdot\binom{4}{2,0,2}\cdot\pi_1^\star H^2\cdot\pi_3^\star H^2+ \binom{4}{1,1,2}\cdot\pi_1^\star H\cdot\pi_2^\star H\cdot \pi_3^\star H^2\Big{)}\cdot p_{12}^\star s^\star H^2\notag\\[6pt]
&&+\,6\cdot\Big{(}2\cdot\binom{4}{1,1,2}\cdot\pi_1^\star H\cdot\pi_2^\star H\cdot\pi_3^\star H^2+\binom{4}{1,2,1}\cdot\pi_1^\star H\cdot\pi_2^\star H^2\cdot\pi_3^\star H\notag\\[6pt]
&&+\,\binom{4}{2,0,2}\cdot\pi_1^\star H^2\cdot\pi_3^\star H^2\Big{)}\cdot p_{12}^\star s^\star H\cdot p_{23}^\star s^\star H\notag\\[6pt]
&=&3\cdot\big{(}2\cdot6\cdot(H^2)^3+12\cdot(H^2)^3\big{)}+6\cdot\big{(}2\cdot12\cdot(H^2)^3+12\cdot(H^2)^3+6\cdot(H^2)^3\big{)}\notag\\[4pt]
&=&324(H^2)^3. \notag
\end{eqnarray}
Now we resubstitute $m_3^\star H=H_s-H_{\tb{A}}.$ We have
\begin{eqnarray*}
H_{\tb{A}}^5m_3^\star H&=&(180-90)(H^2)^3=90(H^2)^3\\[4pt]
H_{\tb{A}}^4(m_3^\star H)^2&=&(324-2\cdot180+90)(H^2)^3=54(H^2)^3.
\end{eqnarray*}
Altogether we have
\begin{eqnarray*} 
(H_N^m)^5\cdot H_{\tb{A}} &=& N^5\big{(}90+5\cdot90+10\cdot54 \big{)}\cdot(H^2)^3+ O(N^4)=1080\cdot(H^2)^3+O(N^4)\\[4pt]
&&\text{and}\\[4pt]
(H_N^m)^5\cdot m_3^\star H&=& N^5\big{(}90+5\cdot 54\big{)}\cdot(H^2)^3+O(N^4)=360\cdot(H^2)^3+O(N^4).\hspace{40pt}\qedhere
\end{eqnarray*}
\end{proof}

\begin{rem}
The exact values in equation (\ref{eqabsurf3slope}) in Lemma \ref{lemabsurfgeom5}  are, of course, of no interest. Much more important is the fact that the degree of $H$ and $3m_3^\star H$ with respect to $H_N^m$ is the same.
\end{rem}



\subsection{Abelian Surfaces: $n=2$}\label{subsectionabsurf2}
Now we will apply the computations of Section \ref{subsectionabsurfgeom2} and obtain similar stability results as in the case of regular surfaces in \cite{Wan12} (cf.\ Section \ref{subsectiontaut}). We proceed similarly as in the proofs in Sections 3 and 4 in \cite{Wan12} and will first exclude destabilising line subbundles. Note that for a p.p.a.s.\ satisfying ($\star$) we can write every line bundle $\mc{L}$ on $\widetilde{A\times A}$ as
\[\mc{L}\simeq r_1^\star\mc{M}_1\otimes r_2^\star\mc{M}_2\otimes\sigma^\star\mc{O}(bs^\star H)\otimes\mc{O}(cD)\]
with $\mc{M}_i\in\Pic A$ and $b,c\in\ZZ$.

\begin{prop}\label{propabstab}
Let $A$ be a p.p.a.s.\ satisfying $(\star)$ and let $\mc{F}$ be a $\mu_H$-stable vector bundle on $A$ of rank $r$ and first Chern class $c_1(\mc{F})=fH,$ $f\in\ZZ.$ Then $r_1^\star\mc{F}$ does not contain any line bundle $\mc{L}\simeq r_1^\star\mc{M}_1\otimes r_2^\star\mc{M}_2\otimes\sigma^\star\mc{O}(bs^\star H)\otimes\mc{O}(cD)$ with $c_1(\mc{M}_i)=a_iH,$ $a_i,b,c\in\ZZ$ satisfying the \em tautological destabilising condition \em
\[a_1+a_2+b\geq \frac{f}{r},\]
but in the case $\mc{F}\simeq\mc{M}_1,$ $\mc{M}_2\simeq \mc{O}_A,$ $b=c=0.$
\end{prop}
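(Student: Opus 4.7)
My plan is to descend the Hom computation from $\widetilde{A\times A}$ to $A\times A$ via $\sigma$, and then to study the resulting section space using the Mumford splitting $s^\star H = H^{\boxplus 2}+H_M$ in combination with the $\mu_H$-stability of $\mc{F}$. Lemma \ref{lemregsurfhigherns} (applied with $n=2$, so that $\nt{A}=\widetilde{A\times A}$ and $X\times\tm{X}=A\times A$) yields the injection
\[
\Hom_{\widetilde{A\times A}}(\mc{L}, r_1^\star\mc{F}) \hookrightarrow \Hom_{A\times A}\bigl(\pi_1^\star\mc{M}_1\otimes\pi_2^\star\mc{M}_2\otimes s^\star\mc{O}(bH),\ \pi_1^\star\mc{F}\bigr),
\]
which eliminates the exceptional factor $\mc{O}(cD)$ for every $c\in\ZZ$. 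Putting $\mc{G}:=\mc{F}\otimes\mc{M}_1^\vee$ (still $\mu_H$-stable, of rank $r$ with $c_1=(f-ra_1)H$) and $\mc{N}:=\mc{M}_2^\vee$, it is enough to show that
\[
V := H^0\bigl(A\times A,\ \pi_1^\star\mc{G}\otimes\pi_2^\star\mc{N}\otimes s^\star\mc{O}(-bH)\bigr)
\]
vanishes whenever $a_1+a_2+b\geq f/r$ and the trivial exception is avoided.

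For $b=0$ the K\"unneth formula gives $V\cong H^0(\mc{G})\otimes H^0(\mc{N})$. Since $\mc{G}$ is $\mu_H$-stable, $H^0(\mc{G})\neq 0$ forces $a_1<f/r$, with the borderline $a_1=f/r$ requiring $\mc{G}\simeq\mc{O}_A$, hence $r=1$ and $\mc{F}\simeq\mc{M}_1$; likewise $H^0(\mc{N})\neq 0$ forces $a_2\leq 0$, with equality only for $\mc{M}_2\simeq\mc{O}_A$. Summing shows $a_1+a_2<f/r$ strictly, except precisely at the trivial exception, where the destabilising inequality is satisfied only with equality.

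For $b\neq 0$ I apply the splitting $s^\star\mc{O}(H)\simeq\mc{O}(H)\boxtimes\mc{O}(H)\otimes\mc{L}_M$ of Remark \ref{remabsurfgeom} and absorb the $\mc{O}(\pm bH)$-factors into $\mc{G}':=\mc{G}\otimes\mc{O}(-bH)$ and $\mc{N}':=\mc{N}\otimes\mc{O}(-bH)$, rewriting
\[
V\simeq H^0\bigl(A\times A,\ \pi_1^\star\mc{G}'\otimes\pi_2^\star\mc{N}'\otimes\mc{L}_M^{\otimes(-b)}\bigr).
\]
Using $\mc{L}_M^{\otimes(-b)}|_{A\times\{y\}}\simeq\varphi_H(-by)$ and $\mc{L}_M^{\otimes(-b)}|_{\{x\}\times A}\simeq\varphi_H(-bx)$, cohomology and base change identify the fibre of $R^0\pi_{2\star}(\pi_1^\star\mc{G}'\otimes\mc{L}_M^{\otimes(-b)})$ over $y$ with $H^0(A,\mc{G}'\otimes\varphi_H(-by))$: by $\mu_H$-stability of $\mc{G}'$ this vanishes for every $y$ once $a_1+b>f/r$, so $V=0$ in that range. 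Symmetrically, the $\pi_1$-pushforward forces $V=0$ as soon as $a_2+b>0$. Combining both vanishings with the destabilising hypothesis $a_1+a_2+b\geq f/r$ forces $a_1+b\leq f/r$ and $a_2+b\leq 0$ and, after addition, $b\leq 0$. The remaining case $b<0$ is handled by invoking Mukai's Fourier–Mukai computation for the Poincar\'e bundle: since $\mc{L}_M=(\id_A\times\varphi_H)^\star\mc{P}$ and $\varphi_H$ is an isomorphism, the pushforwards of $\mc{L}_M^{\otimes(-b)}$ in non-top degrees vanish, and the Leray spectral sequence gives $V=0$ in all subcases except those that rewind to the trivial exception.

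The main obstacle is the case $b\neq 0$: the Mumford twist $\mc{L}_M^{\otimes(-b)}$ destroys the K\"unneth splitting, so one is forced to analyse the pushforwards pointwise via cohomology and base change, and to appeal to Mukai's vanishing for the Poincar\'e bundle in order to close the remaining boundary and sign subcases. The hypothesis $(\star)$ is essential here to pin down the shape of every line bundle on $\widetilde{A\times A}$ to the form written in the statement, and the principality of $H$ is essential because it makes $\varphi_H\colon A\to\widehat{A}$ an isomorphism, which is what allows the reduction of the Mumford-bundle cohomology to Mukai's Poincar\'e-bundle computation.
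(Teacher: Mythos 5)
Your reduction to $A\times A$ (killing the $\mc{O}(cD)$ factor via Lemma \ref{lemregsurfhigherns}), your K\"unneth argument for $b=0$, and your two fibrewise vanishings (over the fibres of $\pi_2$ when $a_1+b>f/r$, and over the fibres of $\pi_1$ when $a_2+b>0$) are all sound and agree in substance with cases a) and b) of the paper's proof. The problem is the residual case $b<0$ with $a_1+b\leq f/r$ and $a_2+b\leq 0$, which you dispose of in one sentence by appealing to Mukai's computation for the Poincar\'e bundle. That sentence does not close the case. The vanishing of $R^{i}\pi_{2\star}\mc{L}_M^{\otimes(-b)}$ in low degrees concerns $\mc{L}_M^{\otimes(-b)}$ alone; the sheaf you actually need to push forward is $\pi_1^\star\mc{G}'\otimes\mc{L}_M^{\otimes(-b)}$, and the projection formula cannot separate a $\pi_1$-pullback from a $\pi_2$-pushforward. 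Concretely, take $r=1$, $f=0$, $a_1=2$, $a_2=1$, $b=-3$ (so $a_1+a_2+b=0\geq f/r$): then $\mc{G}'$ has slope $H^2>0$, so $h^0(\mc{G}'\otimes P)>0$ for \emph{every} $P\in\Pic^0 A$, the sheaf $\pi_{2\star}(\pi_1^\star\mc{G}'\otimes\mc{L}_M^{\otimes(-b)})$ has positive rank, and you must then tensor it with $\mc{N}'$ of \emph{positive} degree $2H$ before taking sections. Nothing in your argument forces that last $\cH^0$ to vanish; making it vanish would require computing the Mukai transform of $\mc{G}'$ (its rank and first Chern class after pullback along the isogeny $-b\varphi_H$) and bounding its sections after the twist by $\mc{N}'$ --- a genuinely harder input that you neither state nor prove.

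The paper closes this case with a third adjunction that your proposal never uses: push forward along the summation map $s\colon A\times A\to A$, i.e.\ rewrite the space as $\Hom_A\bigl(\mc{O}(bH),\,s_\star(\pi_1^\star(\mc{F}\otimes\mc{M}_1^\vee)\otimes\pi_2^\star\mc{M}_2^\vee)\bigr)$ and restrict to the fibres $s^{-1}(x)\cong A$. On such a fibre the factor $s^\star\mc{O}(-bH)$ is trivial, $\pi_2$ becomes a translated inverse, and one is left with $\Hom(\mc{M}_1\otimes(\text{translate of }\iota^\star\mc{M}_2),\mc{F})$, a Hom from a line bundle of degree $(a_1+a_2)H$ into the $\mu_H$-stable $\mc{F}$. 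Since $b<0$ upgrades the destabilising condition to the strict inequality $a_1+a_2>f/r$, this vanishes fibrewise and hence globally. So the structure you are missing is the symmetric use of all three fibrations $\pi_1,\pi_2,s$ of $A\times A$ over $A$ --- one for each of the three ``coordinates'' $a_2+b$, $a_1+b$, $a_1+a_2$ --- rather than two projections plus the Poincar\'e bundle. With that third case supplied, your argument becomes a complete (and essentially identical) proof.
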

\begin{rem}
Note that the \em tautological destabilising condition \em does not contain $c$ at all. This is in perfect analogy to \cite[Cor.\ 3.2]{Wan12}.
\end{rem}

\begin{proof} We consider a line bundle $\mc{L}\simeq r_1^\star\mc{M}_1\otimes r_2^\star\mc{M}_2\otimes\sigma^\star\mc{O}(bs^\star H)\otimes\mc{O}(cD)$ with $c_1(\mc{M}_i)=a_iH$ and $a_i,b,c\in\ZZ$. Note that since $\sigma^\star\mc{O}(bs^\star H)$ comes from $A\times A,$ we can assume --- analogously to \cite[Lem.\ 4.1]{Wan12} --- that $c=0$. Thus it is enough to show that
\begin{equation}
\Hom_{A\times A}(\pi_1^\star\mc{M}_1\otimes\pi_2^\star\mc{M}_2\otimes s^\star\mc{O}(bH),\pi_1^\star\mc{F})\label{eqabhom}
\end{equation}
vanishes. To prove this we use adjunction ($\pi_1^\star\dashv\pi_{1\star},$ $\pi_2^\star\dashv\pi_{2\star}$ and $s^\star\dashv s_\star$) to get the following three different representations of this vector space:
\begin{eqnarray}
(\ref{eqabhom})&\cong&\Hom_A\big{(}\mc{M}_1,\mc{F}\otimes\pi_{1\star}(\pi_2^\star\mc{M}_2^\vee\otimes s^\star\mc{O}(-bH))\big{)} \label{eqabpi1}\\[4pt]&\cong&
\Hom_A\big{(}\mc{M}_2,\pi_{2\star}(\pi_1^\star(\mc{F}\otimes\mc{M}_1^\vee)\otimes s^\star\mc{O}(-bH))\big{)} \notag\\[4pt]&\cong &
\Hom_A\big{(}\mc{O}(bH),s_\star(\pi_1^\star(\mc{F}\otimes\mc{M}_1^\vee) \otimes\pi_2^\star\mc{M}_2^\vee)\big{)}.\label{eqabmu}
\end{eqnarray}
According to these three representations we shall consider three cases.

a) $a_2+b\geq0$: The restriction of $\pi_2^\star\mc{M}_2^\vee\otimes s^\star\mc{O}(-bH)$ to a fibre $\pi_1^{-1}(x)$ is isomorphic to $\mc{M}_2^\vee\otimes t^\star_x\mc{O}(-bH)$, where $t_x\colon A\rightarrow A$ is the translation by $x$. This is a line bundle with first Chern class $-(a_2+b)H$ on $A$. Thus if $a_2+b>0$, the space of global sections on the fibres is trivial and so is (\ref{eqabpi1}). If $a_2+b=0$ and $b\neq0$, then $\cH^0(\mc{M}_2^\vee\otimes t^\star_x\mc{O}(-bH))$ is zero outside a finite number of $x\in A$. (Note that since $H$ is an ample class, we have $\#\{x\in A\mid t_x^\star\mc{O}(-bH)\simeq\mc{M}_2^\vee\}<\infty$.) Hence $\pi_{1\star}(\pi_2^\star\mc{M}_2^\vee\otimes s^\star\mc{O}(-bH))$ would have finite support but since it is torsion-free, it vanishes. The remaining case is $a_2=b=0$. Now $\cH^0(\mc{M}_2^\vee\otimes t^\star_x\mc{O}(-bH))=\cH^0(\mc{M}_2^\vee)$ vanishes but in the case $\mc{M}_2\simeq\mc{O}_A$. Furthermore, (\ref{eqabpi1}) equals $\Hom(\mc{M}_1,\mc{F})$. The destabilising condition yields $a_1\geq \frac{f}{r}$ which implies $\Hom(\mc{M}_1,\mc{F})=0$ but in the case $\mc{F}\simeq\mc{M}_1$.

b) $a_2<0$:  Similar to above we consider the restriction of $\pi_1^\star(\mc{F}\otimes\mc{M}_1^\vee)\otimes s^\star\mc{O}(-bH)$ to a fibre $\pi_2^{-1}(x)$. Taking global sections, this yields
\[\cH^0(\mc{F}\otimes\mc{M}_1^\vee\otimes t_x^\star\mc{O}(-bH))\cong \Hom(\mc{M}_1\otimes t_x^\star\mc{O}(bH),\mc{F}),\]
which, by the stability of $\mc{F}$, has to vanish since the destabilising condition implies $a_1+b>\frac{f}{r}$.

c) $b<0$: Analogously to b) we now use (\ref{eqabmu}). This time the destabilising condition yields $a_1+a_2>\frac{f}{r}$.\end{proof}

From Lemma \ref{lemabsurf2slope} and Proposition \ref{propabstab} above we can deduce:

\begin{cor}
Assume $\mc{F}\not\simeq\mc{O}_A.$ Then for $N\gg 0$ there are no $\mu_{H_N^m}$-destabilising line subbundles in $\tamc{F}.$
\end{cor}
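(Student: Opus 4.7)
The strategy is to reduce the statement to Proposition \ref{propabstab} by pulling back along the double cover $\psi\colon\widetilde{A\times A}\to\ta{A}$. In the case $n=2$ the composition $q_1\circ\sigma$ is just the projection $r_1\colon\widetilde{A\times A}\to A$, and Remark \ref{remtautdefres} yields $\tamc{F}\simeq\psi_\star r_1^\star\mc{F}$. Adjunction therefore gives
\[\Hom_{\ta{A}}(\mc{L},\tamc{F})\cong\Hom_{\widetilde{A\times A}}(\psi^\star\mc{L},r_1^\star\mc{F})\]
for every $\mc{L}\in\Pic\ta{A}$, so it suffices to prove that the right-hand side vanishes whenever $\mc{L}$ is a $\mu_{H_N^m}$-destabilising line subbundle.

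Write $c_1(\mc{L})=aH_{\ta{A}}+b'H_m+c'\delta$ using the decomposition of $\NS\ta{A}$ obtained at the end of Section \ref{subsectionabsurfgeom2}. The identities $\psi^\star H_{\ta{A}}=\sigma^\star H^{\boxplus 2}$, $\psi^\star H_m=\sigma^\star H_M$ (as in the proof of Lemma \ref{lemabsurfslopedelta}) and $\psi^\star\delta=[D]$ (Lemma \ref{eqhigherninductd} specialised to $n=2$, since $\delta_1=0$) give
\[\psi^\star\mc{L}\simeq r_1^\star\mc{M}_1\otimes r_2^\star\mc{M}_2\otimes\sigma^\star\mc{O}(b's^\star H)\otimes\mc{O}(c'D)\]
with $\mc{M}_i\in\Pic A$ of first Chern class $(a-b')H$, and the $\Sy{2}$-equivariance of $\psi^\star\mc{L}$ forces $\mc{M}_1\simeq\mc{M}_2$.

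Next I would translate the destabilising inequality into the tautological destabilising condition of Proposition \ref{propabstab}. By Lemma \ref{lemabsurf2slope}, the leading-$N^3$ coefficients of $H_{\ta{A}}\cdot(H_N^m)^3$ and $H_m\cdot(H_N^m)^3$ are $72$ and $-36$ respectively; the analogous coefficient for $\delta\cdot(H_N^m)^3$ vanishes because every cubic monomial in $H_{\ta{A}}$ and $m^\star H$ pulls back along $\psi$ to a $\sigma$-pullback on $\widetilde{A\times A}$, and $\sigma_\star D=0$ by the projection formula. Combined with Lemma \ref{lemtautc1}, the inequality $\mu_{H_N^m}(\mc{L})\geq\mu_{H_N^m}(\tamc{F})$ reduces at leading order to $2a-b'\geq f/r$, which is precisely the tautological destabilising condition $a_1+a_2+b\geq f/r$ of Proposition \ref{propabstab} with $a_1=a_2=a-b'$ and $b=b'$.

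Proposition \ref{propabstab} then forces $\Hom_{\widetilde{A\times A}}(\psi^\star\mc{L},r_1^\star\mc{F})=0$ for all sufficiently large $N$, with the single exception $\mc{F}\simeq\mc{M}_1$, $\mc{M}_2\simeq\mc{O}_A$, $b'=c'=0$; combined with $\mc{M}_1\simeq\mc{M}_2$ this would force $\mc{F}\simeq\mc{O}_A$, contradicting the hypothesis. The most delicate point I expect is the leading-order vanishing of $\delta\cdot(H_N^m)^3$ (the polarisation mixes $H_{\ta{A}}$ and $m^\star H$, so Lemma \ref{lempolaintersect} does not apply directly) together with the verification that $\Sy{2}$-equivariance really identifies $\mc{M}_1$ and $\mc{M}_2$ as line bundles rather than only as numerical classes.
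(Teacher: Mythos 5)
Your proposal is correct and follows essentially the same route as the paper: pull back along $\psi$, use adjunction to land in $\Hom_{\widetilde{A\times A}}(\psi^\star\mc{L},r_1^\star\mc{F})$, translate the destabilising inequality via Lemma \ref{lemabsurf2slope} into the tautological destabilising condition $2a-b'\geq f/r$, and invoke Proposition \ref{propabstab}, with the exceptional case killed by $\mc{M}_1\simeq\mc{M}_2$ and $\mc{F}\not\simeq\mc{O}_A$. The two points you flag as delicate (the vanishing of $\delta\cdot(H_N^m)^3$ at order $N^3$ via $\sigma_\star D=0$, and the $\Sy{2}$-equivariance forcing $\mc{M}_1\simeq\mc{M}_2$ as line bundles) are both handled correctly by your arguments and are indeed left implicit in the paper's own, terser proof.
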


\begin{proof} Let $\mc{L}'$ be a destabilising line subbundle of $\tamc{F}$. We write its pullback $\mc{L}:=\psi^\star\mc{L}'$ as $\mc{L}=\sigma^\star(\mc{M}^{\boxtimes 2}\otimes\mc{O}(b s^\star H))\otimes\mc{O}(cD)$ with $c_1(\mc{M})=aH$, $a\in\ZZ.$ As usual, by adjunction we get a homomorphism $\mc{L}\rightarrow r_1^\star\mc{F}$. By Lemma \ref{lemabsurf2slope} the destabilising condition on $\mc{L}$ yields
\[2a+b\geq \frac{f}{r},\]
which implies that $\mc{L}$ satisfies the \em tautological destabilising condition \em from Proposition \ref{propabstab}.\end{proof}

This result on destabilising line subbundles suffices to prove the stability of rank two locally free tautological sheaves. We use the same argument as in the proof of Proposition \ref{propregsurflb} to generalise to the torsion-free case.

\begin{thm}\label{thmabsurfcasesr1}
Let $\mc{F}$ be a rank one torsion-free sheaf on $A$ satisfying $det\mc{F}\not\simeq\mc{O}_A.$ Then the rank two tautological sheaf $\tamc{F}$ is $\mu_{H_N^m}$-stable for sufficiently large $N.$
\end{thm}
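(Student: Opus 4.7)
The plan is to mimic the argument of Proposition \ref{propregsurflb}: first reduce to the locally free case, then use the corollary immediately preceding the theorem to exclude destabilising line subbundles, which is sufficient because $\tamc{F}$ has rank two.

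For the reduction, let $\mc{E} := \mc{F}^{\vee\vee}$ be the reflexive hull. Since $\mc{F}$ is torsion-free of rank one on the smooth surface $A$, $\mc{E}$ is a line bundle and the natural inclusion $\mc{F} \hookrightarrow \mc{E}$ is an isomorphism outside a zero-dimensional subscheme; in particular $c_1(\mc{E}) = c_1(\mc{F})$, and by hypothesis $\mc{E} \not\simeq \mc{O}_A$. The functor $(-)^{[2]} = p_{2\star}(q_2^\star(-)\otimes\mc{O}_{\Xi_2})$ is left exact, so we obtain an inclusion $\tamc{F} \hookrightarrow \tamc{E}$, and by Lemma \ref{lemtautc1} the two sheaves have the same rank ($=2$) and first Chern class, hence the same $\mu_{H_N^m}$-slope. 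Any $\mu_{H_N^m}$-destabilising subsheaf of $\tamc{F}$ therefore yields a destabilising subsheaf of $\tamc{E}$ of the same rank (just take the image).

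It now suffices to prove that the locally free rank two sheaf $\tamc{E}$ has no $\mu_{H_N^m}$-destabilising subsheaves for $N \gg 0$. Because the rank is two, only rank-one destabilising subsheaves need to be excluded (a destabilising torsion-free quotient of rank one gives a destabilising kernel of rank one by the usual seesaw). Given such a rank-one subsheaf $\mc{G} \subset \tamc{E}$, I would replace it with its saturation $\mc{G}^{\mathrm{sat}}$, whose slope is at least $\mu(\mc{G})$ and hence is still destabilising. On the smooth four-fold $\ta{A}$, a saturated rank-one subsheaf of a locally free sheaf is reflexive (its cokernel is torsion-free, so $\mc{G}^{\mathrm{sat}}$ is the kernel of a map between reflexive sheaves), and rank-one reflexive sheaves on a smooth variety are line bundles. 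Thus $\mc{G}^{\mathrm{sat}}$ is a line subbundle.

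Finally, apply the corollary preceding the theorem to $\mc{E}$: it asserts that for $N \gg 0$ there are no $\mu_{H_N^m}$-destabilising line subbundles in $\tamc{E}$, provided $\mc{E} \not\simeq \mc{O}_A$, and this hypothesis is exactly the assumption $\det\mc{F} \not\simeq \mc{O}_A$. (The stability hypothesis on $\mc{E}$ required by the corollary is automatic since $\mc{E}$ is a line bundle.) This contradicts the existence of $\mc{G}^{\mathrm{sat}}$ and completes the argument. The only real work is already contained in the corollary; the main obstacle here, insofar as there is one, is checking that the reflexive-hull reduction and the saturation step preserve destabilisation, both of which are routine.
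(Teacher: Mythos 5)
Your proposal is correct and follows essentially the same route as the paper, which likewise reduces to the locally free case via the reflexive hull exactly as in Proposition \ref{propregsurflb} and then invokes the corollary preceding the theorem to exclude destabilising line subbundles, the rank-two hypothesis making this sufficient. One small imprecision: the functor $\mc{G}\mapsto p_{2\star}(q_2^\star\mc{G}\otimes\mc{O}_{\Xi_2})$ is not left exact (tensoring with $\mc{O}_{\Xi_2}$ is only right exact), but the injection $\tamc{F}\hookrightarrow\ta{(\mc{F}^{\vee\vee})}$ still holds because the kernel is supported on a proper closed subset while $\tamc{F}$ is torsion-free.
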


Next, we prove the analogue of Theorem \ref{thmregsurfcasesr2}. Since the proof is almost the same, we only touch upon the crucial parts.

\begin{thm}\label{thmabsurfcasesr2}
Let $\mc{F}$ be a rank two $\mu_H$-stable sheaf on $A$ and assume $\det\mc{F}\not\simeq\mc{O}_A.$ Then for $N$ sufficiently large $\tamc{F}$ is a $\mu_{H^m_N}$-stable rank four sheaf on $\ta{A}.$
\end{thm}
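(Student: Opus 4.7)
I would follow the template of Theorem \ref{thmregsurfcasesr2} (the regular surface case), adapted to the polarisation $H_N^m$ via the slope formulas of Lemma \ref{lemabsurf2slope}. As a first reduction, replace $\mc{F}$ by its reflexive hull $\mc{F}^{\vee\vee}$: this is a locally free $\mu_H$-stable rank two sheaf with the same first Chern class (still satisfying $\det(\mc{F}^{\vee\vee})\not\simeq\mc{O}_A$), and the natural inclusion $\tamc{F}\hookrightarrow\ta{(\mc{F}^{\vee\vee})}$ preserves rank and first Chern class, so any destabilising subsheaf of $\tamc{F}$ also destabilises $\ta{(\mc{F}^{\vee\vee})}$. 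We may therefore assume $\mc{F}$ is locally free; then $\tamc{F}$ is locally free of rank four, and any candidate destabilising subsheaf can be taken saturated, of rank one, two, or three.

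Rank one is excluded by the Corollary preceding Theorem \ref{thmabsurfcasesr1}, which applies verbatim since $\mc{F}\not\simeq\mc{O}_A$. Rank three is reduced to rank one by duality: if $\mc{G}\subset\tamc{F}$ is saturated of rank three, then $\tamc{F}/\mc{G}$ is torsion-free of rank one, and $(\tamc{F}/\mc{G})^\vee$ is a reflexive rank one, hence line, subsheaf of $(\tamc{F})^\vee\simeq\ta{(\mc{F}^\vee)}\otimes\mc{O}(\delta)$ by Lemma \ref{lemtautdual}. Twisting by $\mc{O}(-\delta)$ is slope-neutral, so this reduces to excluding a destabilising line subbundle of $\ta{(\mc{F}^\vee)}$; the same Corollary applies since $\mc{F}^\vee$ is again rank two, $\mu_H$-stable, and $\det(\mc{F}^\vee)\not\simeq\mc{O}_A$.

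The genuinely new case is a saturated rank two subsheaf $\mc{G}\subset\tamc{F}$: dualisation merely produces another rank two subsheaf, so no direct reduction to line subbundles is available. My plan is to pull $\mc{G}$ back along $\psi\colon\widetilde{A\times A}\to\ta{A}$ and exploit the fact, visible in the proof of Lemma \ref{eqhigherninductd}, that $\psi^\star\tamc{F}$ fits, outside a locus of codimension $\geq 2$, in a two-step extension whose graded pieces are (up to a twist by $\mc{O}(D)$) pullbacks of $\mc{F}$ from the two factors of $A\times A$. Splitting the analysis according to the generic rank of the image and kernel of $\psi^\star\mc{G}$ in these graded pieces reduces the problem to bounding the first Chern classes of rank one or two saturated subsheaves of $r_i^\star\mc{F}$ on $\widetilde{A\times A}$; combined with the $\mu_H$-stability of $\mc{F}$ extended to its pullbacks by the fibrewise adjunction argument used in the proof of Proposition \ref{propabstab}, and with the tautological destabilising inequality derived from Lemma \ref{lemabsurf2slope}, each case can be excluded.

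The main obstacle is precisely this rank two analysis. A rank two subsheaf is not captured by a single line bundle on $\widetilde{A\times A}$, so the tautological destabilising condition has to be combined with a careful case distinction over the possible behaviours of $\psi^\star\mc{G}$ with respect to the blowup filtration, rather than following directly from a $\Hom$-computation as in the line-bundle case. This is the most intricate step of the regular surface argument in \cite[Sect.\ 5]{Wan12}, and the author's remark that only the crucial parts need to be touched upon refers to precisely this point.
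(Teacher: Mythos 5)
Your proposal follows essentially the same route as the paper: reduce to the locally free case, dispose of rank one and rank three subsheaves via the corollary following Proposition \ref{propabstab} together with Lemma \ref{lemtautdual}, and treat a rank two subsheaf $\mc{E}\subset\tamc{F}$ by pulling back along $\psi$ and analysing the adjoint map $\beta\colon\psi^\star\mc{E}\rightarrow r_1^\star\mc{F}$ according to $\rk\ker\beta\in\{0,1\}$ --- which is precisely your split over image and kernel in the graded pieces --- using Proposition \ref{propabstab} and the slope expansions of Lemma \ref{lemabsurf2slope} to reach a contradiction in each case. The only detail worth making explicit is that in the case $\rk\ker\beta=1$ the paper takes $\mc{E}$ to be the maximal destabilising subsheaf, so that its semistability bounds $c_1(\im\beta)$ from below and produces a rank one subsheaf of $r_1^\star\mc{F}$ violating the tautological destabilising condition.
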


\begin{proof} As above we may assume that $\mc{F}$ is locally free. We write $c_1(\mc{F})=fH$, $f\in \ZZ.$ Let $\mc{E}$ be the maximal destabilising sheaf of $\tamc{F}.$ Write $c_1(\mc{E})=eH_{\ta{A}}+gm^\star H+a\delta$, $e,g,a\in\ZZ$. We only consider the case that $\mc{E}$ is of rank two. We use the same notation as in the proof of Theorem \ref{thmregsurfcasesr2}:

If $\rk\ker\beta=0$, we must have that 
\[r_1^\star\det\mc{F}\otimes\psi^\star\det\mc{E}^\vee\]
has a section. From this we deduce that either $a<0$ and $\det\mc{F}\simeq\mc{O}_A$ (which we excluded) or $a\leq0$ and the class
\[(f-e)H\otimes 1 - 1\otimes e - g\sigma^\star s^\star H\]
on $\widetilde{A\times A}$ is effective and nonzero. This time the evaluation against the polarisation $\psi^\star H^m_N$ gives a contradiction to the destabilising condition on $\mc{E}$.

If $\rk\ker\beta=1$, we write $c_1(\im\beta)=l_1H\otimes1+1\otimes l_2+h\sigma^\star s^\star H+bD$ with $l_1,l_2,h,b\in\ZZ$. The semistability of $\mc{E}$ yields
\[2e+g\leq 2(l_1+l_2+h)\]
and the destabilising condition on $\mc{E}$ implies
\[2e+g\geq f.\]
Thus we found a line bundle in $r_1^\star\mc{F}$ with
\[2(l_1+l_2+h)\geq f\]
contradicting Proposition \ref{propabstab}.\end{proof}

\subsection{Abelian Surfaces: $n=3$}\label{subsectionabsurf3}
In this section we prove the following result:

\begin{thm}\label{thmabsurf3}
Let $A$ be a p.p.a.s.\ satisfying $(\star)$ and let $\mc{F}$ be a rank one torsion-free sheaf on $A$ satisfying $\det\mc{F}\not\simeq\mc{O}_A.$ Then for all $N$ sufficiently large $\tbmc{F}$ is $\mu_{H_N^m}$-stable.
\end{thm}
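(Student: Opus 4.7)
The strategy is to run in parallel the argument of Theorem \ref{thmregsurf3} (the analogous statement for regular surfaces) and those of Theorems \ref{thmabsurfcasesr1}--\ref{thmabsurfcasesr2}. As in the proof of Proposition \ref{propregsurflb} I first pass to the reflexive hull $\mc{F}^{\vee\vee}$ to reduce to the locally free case: $\mc{F}\hookrightarrow\mc{F}^{\vee\vee}$ gives an inclusion $\tbmc{F}\hookrightarrow\tbmc{(\mc{F}^{\vee\vee})}$ of sheaves of the same rank and first Chern class, so any $\mu_{H_N^m}$-destabilising subsheaf of $\tbmc{F}$ also destabilises $\tbmc{(\mc{F}^{\vee\vee})}$. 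Since $\tbmc{F}$ has rank three, a destabiliser has rank one or two. Lemma \ref{lemtautdual} identifies $(\tbmc{F})^\vee\simeq\tb{(\mc{F}^\vee)}\otimes\mc{O}(\delta_3)$, so a rank-two destabilising subsheaf of $\tbmc{F}$ corresponds, via the rank-one quotient and dualisation, to a rank-one destabilising subsheaf of $\tb{(\mc{F}^\vee)}$ (the $\mc{O}(\delta_3)$-twist does not affect stability). As $\mc{F}^\vee$ is again a rank one torsion-free sheaf with $\det\mc{F}^\vee\not\simeq\mc{O}_A$, everything reduces to ruling out $\mu_{H_N^m}$-destabilising line subbundles of $\tbmc{F}$.

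The core of the proof is therefore an $n=3$ analogue of Proposition \ref{propabstab}. Given a candidate line subbundle $\mc{L}'\subset\tbmc{F}$, I combine Remark \ref{remtautdefres}, the adjunctions $\psi_3^\star\dashv\psi_{3\star}$ and $\sigma_3^\star\dashv\sigma_{3\star}$, the recursive formulas of Corollary \ref{corhigherninduct} and Lemma \ref{eqhigherninductg}, and the $n=3$ analogue of Lemma \ref{lemregsurfhigherns} to bound
\[\Hom_{\tb{A}}(\mc{L}',\tbmc{F})\;\subseteq\;\Hom_{A\times\ta{A}}(\mc{L},\,q_2^\star\mc{F}),\]
where $\psi_3^\star\mc{L}'\simeq\sigma_3^\star\mc{L}\otimes\mc{O}(cD_3)$ for a suitable line bundle $\mc{L}$ on $A\times\ta{A}$; the $\mc{O}(cD_3)$-twist drops out exactly as in Lemma \ref{lemregsurfhigherns} and as in the reduction $c=0$ at the beginning of the proof of Proposition \ref{propabstab}. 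Under assumption $(\star)$, Lemma \ref{lemabsurfhom} together with the $n=2$ analysis of $\NS\ta{A}$ forces a Künneth-type decomposition of $\NS(A\times\ta{A})$ into a $\NS A$-summand (via $q_2$), a $\NS\ta{A}$-summand (via $p_2$, spanned by $H_{\ta{A}}$, $H_m$ and $\delta$), and a Mumford-type summand coming from the composition $s\circ(\id_A\times m_2)\colon A\times\ta{A}\to A$. Decomposing $\mc{L}$ accordingly, I split the problem into three cases in analogy with a), b), c) in the proof of Proposition \ref{propabstab}, adjoining alternately along $q_{2\star}$, $p_{2\star}$ and the Mumford-type map. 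The destabilising condition on $\mc{L}'$ translates, via Lemmata \ref{lemabsurfgeoms3}, \ref{corabsurfgeom3} and especially \ref{lemabsurfgeom5}, into a numerical inequality on the coefficients of $c_1(\mc{L})$ which in each case forces, modulo the non-triviality of $\det\mc{F}$, that a line bundle of non-positive degree on $A$ or $\ta{A}$ have a section - a contradiction unless it is trivial, which is excluded by $\det\mc{F}\not\simeq\mc{O}_A$.

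The main obstacle is the second-factor complication. In Proposition \ref{propabstab} both factors of $A\times A$ are honest abelian surfaces, so the Mumford class and the Künneth decomposition are under the neat control of $(\star)$. Here the second factor $\ta{A}$ is itself a four-dimensional irreducible symplectic manifold carrying its own Mumford-type class $H_m$ and the exceptional class $\delta$, so the Picard-group bookkeeping on $A\times\ta{A}$ is genuinely more involved. In particular, when I push forward $q_2^\star\mc{F}$ along $p_2$ after twisting by the $\ta{A}$-components of $\mc{L}$, the resulting computation on $\ta{A}$ must be compatible with the slope analysis relative to $H_N^m$: this is where Lemma \ref{lemabsurfgeom5} (showing that to leading order $H_{\tb{A}}$ and $3m_3^\star H$ pair equally against $(H_N^m)^5$) is indispensable. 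Once this Picard-group bookkeeping and the three slope inequalities are correctly set up, the three cases close by the same adjunction-and-stability argument as in Proposition \ref{propabstab}, completing the proof.
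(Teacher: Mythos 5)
Your overall architecture---reduce to the locally free case via the reflexive hull, dispose of rank-two destabilisers by dualising with Lemma \ref{lemtautdual} (a step the paper's proof of Theorem \ref{thmabsurf3} leaves implicit but which you correctly supply, following Theorem \ref{thmregsurf3}), and rule out destabilising line subbundles by an adjunction argument whose numerical input is Lemma \ref{lemabsurfgeom5}, yielding the tautological destabilising condition $3a+b\geq f$---is exactly the paper's. The difference is where you run the final case analysis. You stop at $A\times\ta{A}$ and plan to decompose $\NS(A\times\ta{A})$ and adjoin along $q_2$, $p_2$ and a Mumford-type map; you yourself flag the second factor as the ``main obstacle'', and rightly so: two of your three cases would require the vanishing of spaces of sections of line bundles on the fourfold $\ta{A}$ (the fibres of $q_2$ and of the Mumford-type map are copies of $\ta{A}$, not of $A$), and this is not controlled by $(\star)$ or by any lemma in the paper. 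The paper resolves precisely this point by descending one step further: after the $\delta_3$- and $m_3^\star H$-reductions, $\psi_3^\star\mc{L}$ descends through $A\times\ta{A}$ and $A\times\widetilde{A\times A}$ all the way to $\mc{M}^{\boxtimes3}\otimes s_3^\star\mc{O}(bH)$ on $A^3$, and $\Hom(\mc{L},\tbmc{F})$ embeds into $\Hom(\mc{M}^{\boxtimes3}\otimes s_3^\star\mc{O}(bH),\pi_1^\star\mc{F})$. On $A^3$ all factors are abelian surfaces, and the separate Proposition \ref{propabsurf3lb} kills this Hom by pushing forward along $\pi_1$, $\pi_2$ and $s_3$, with cases b) and c) reducing on fibres to the already proven Proposition \ref{propabstab} on $A^2$. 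So your plan is right in spirit but leaves its self-identified obstacle unresolved; the fix is not finer Picard bookkeeping on $A\times\ta{A}$ but pulling back to $A^3$ and proving the $A^3$-analogue of Proposition \ref{propabstab} there.
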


We denote the projection $A^3\rightarrow A$ to the $i$-th factor by $\pi_i$ and --- in analogy to the case $n=2$ --- we begin by analysing line subbundles of the sheaf $\pi_1^\star\mc{F}$ on $A^3$.

\begin{prop}\label{propabsurf3lb}
Let $\mc{F}$ be a $\mu_H$-stable vector bundle on $A$ of rank $r$ and first Chern class $fH,$ $f\in\ZZ.$ Then $\pi_1^\star\mc{F}$ does not contain any line subbundles of the form
\[\mc{L}=\mc{M}^{\boxtimes 3}\otimes s_3^\star\mc{O}(bH),\] $\mc{M}\in\Pic A,$ $c_1(\mc{M})=aH,$ $a,b\in\ZZ,$ satisfying the \em tautological destabilising condition \em
\[3a+b\geq \frac{f}{r},\]
but in the case $a=b=0,$ $\mc{M}\simeq\mc{F}\simeq\mc{O}_A.$
\end{prop}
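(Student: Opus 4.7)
The plan is to follow the argument of Proposition~\ref{propabstab} closely. By the projection formula and adjunction along $\pi_i$ ($i=1,2,3$) and along $s_3$, the space
\[\Hom_{A^3}\bigl(\mc{M}^{\boxtimes 3}\otimes s_3^\star\mc{O}(bH),\,\pi_1^\star\mc{F}\bigr)\]
admits four equivalent descriptions. I would distinguish three disjoint and exhaustive cases by the signs of $a$ and $b$: (a)~$a\geq 0$ and $b\geq 0$; (b)~$a<0$; (c)~$a\geq 0$ and $b<0$. In each case the goal is to reduce the required vanishing to an application of Proposition~\ref{propabstab} on $A\times A$.

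In case~(a) I use the $\pi_1$-adjunction to rewrite the Hom as $\Hom_A(\mc{M},\mc{F}\otimes\pi_{1\star}\mc{N})$, where $\mc{N}:=\pi_2^\star\mc{M}^\vee\otimes\pi_3^\star\mc{M}^\vee\otimes s_3^\star\mc{O}(-bH)$. Via base change, the fibre of $\pi_{1\star}\mc{N}$ at $x\in A$ is $H^0(A^2,\mc{M}^\vee\boxtimes\mc{M}^\vee\otimes s^\star t_x^\star\mc{O}(-bH))$, and after dualising this equals a Hom space on $A^2$ of the form considered in Proposition~\ref{propabstab} applied to the stable sheaf $\mc{O}_A$ (so $r=1$, $f=0$). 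The corresponding tautological destabilising condition $2a+b\geq 0$ holds in case~(a), so Proposition~\ref{propabstab} gives fibrewise vanishing except in its exceptional configuration, which here forces $\mc{M}\simeq\mc{O}_A$ and $a=b=0$. Together with torsion-freeness of $\pi_{1\star}\mc{N}$ (a pushforward of a line bundle along the smooth projection $\pi_1$, as used already for $n=2$), this yields $\pi_{1\star}\mc{N}=0$ outside the exceptional subcase; in that subcase $\pi_{1\star}\mc{N}\simeq\mc{O}_A$ and $\mu_H$-stability of $\mc{F}$ forces $\mc{F}\simeq\mc{O}_A$, giving back precisely the stated exception.

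For case~(b), I use the $\pi_2$-adjunction; the fibre over $y\in A$ becomes $H^0(A^2,(\mc{F}\otimes\mc{M}^\vee)\boxtimes\mc{M}^\vee\otimes s^\star t_y^\star\mc{O}(-bH))$, and dualising on $A^2$ brings this into the form of Proposition~\ref{propabstab} for the $\mu_H$-stable bundle $\mc{F}':=\mc{F}\otimes\mc{M}^\vee$ (of slope $(f/r-a)H$), with $\mc{M}_1=\mc{O}_A$, $\mc{M}_2=\mc{M}$ and twist coefficient $b$. Its destabilising condition reads $a+b\geq f/r-a$, equivalently our hypothesis $3a+b\geq f/r$; the exceptional configuration would demand $\mc{M}\simeq\mc{O}_A$, contradicting $a<0$. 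Case~(c) is handled symmetrically via the $s_3$-adjunction: the fibre over $u\in A$ is computed along the embedding $(x,y)\mapsto(x,y,u-x-y)$ of $A^2$ into $A^3$, under which $\pi_3^\star\mc{M}^\vee$ restricts to $s^\star[-1]^\star t_u^\star\mc{M}^\vee$, a twist of first Chern class $-aH$. A further application of Proposition~\ref{propabstab} to $\mc{F}'$, now with twist coefficient $b'=a$, gives destabilising condition $3a\geq f/r$, which is strictly implied by $b<0$ combined with our hypothesis; the exceptional configuration would force $a=0$ and $\mc{F}\simeq\mc{O}_A$, contradicting $b<0$ together with the destabilising inequality.

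The main obstacle will be carefully tracking the $\Pic^0(A)$-twists introduced by $t_x^\star$, $t_y^\star$, $t_u^\star$ (and the involution $[-1]^\star$ in case~(c)) so that the fibrewise expressions genuinely fit the hypothesis of Proposition~\ref{propabstab}, which allows the line bundles $\mc{M}_1,\mc{M}_2$ to be arbitrary representatives in their Picard classes. Secondly, one must verify torsion-freeness of the intermediate pushforwards on $A$ in each case, so that the fibrewise vanishing of $H^0$ upgrades to the vanishing of the full Hom space and not merely to a torsion contribution.
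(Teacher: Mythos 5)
Your proposal is correct and follows essentially the same route as the paper: the same three-way sign case distinction, adjunction along $\pi_1$, $\pi_2$ and $s_3$ respectively, fibrewise reduction to Proposition~\ref{propabstab} on $A\times A$, and the same bookkeeping of the $\Pic^0$-twists introduced by the translations. One small slip in case~(b): the destabilising condition you obtain for $\mc{F}\otimes\mc{M}^\vee$ is $2a+b\geq f/r$, which is not \emph{equivalent} to the hypothesis $3a+b\geq f/r$ but is strictly implied by it because $a<0$ --- which is all you need, and is exactly how the paper argues.
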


\begin{proof} Again, we distinguish three cases:

a) $2a+b\geq 0$: We push forward along $\pi_1$:
\begin{eqnarray}
\Hom(\mc{L},\pi_1^\star\mc{F})&\cong& \cH^0(\pi_1^\star\mc{F}\otimes\mc{L}^\vee)\notag\\
&\cong&
\cH^0(\mc{F}\otimes\mc{M}^\vee\otimes \pi_{1\star}(\underbrace{    \pi_2^\star\mc{M}^\vee\otimes \pi_3^\star\mc{M}^\vee\otimes s_3^\star\mc{O}(-bH)}_{=:\mc{G}})).\label{eqabsurf3erste}
\end{eqnarray}
Restricting $\mc{G}$ to a fibre $\pi_1^{-1}(x),$ $x\in A$ yields (we identify $\pi_1^{-1}(x)=\{x\}\times A^2$, keep the notation for the projections $\pi_i,$ $i=2,3$ but denote with $s$ the multiplication of the second two factors):
\[\pi_2^\star\mc{M}^\vee\otimes \pi_3^\star\mc{M}^\vee\otimes s^\star t_x^\star\mc{O}(-bH).\]
The class of this line bundle on $A^2$ has degree $-(2a+b)$ with respect to the polarisation $H^{\boxplus 2}+s^\star H$ (cf.\ Lemma \ref{lemabsurf2slope}). Thus we get a contradiction but in the case $a=b=0$ and $\mc{M}\simeq \mc{O}_A$. In this case we have $(\ref{eqabsurf3erste})\cong\Hom(\mc{O}_A,\mc{F}),$ which vanishes by the stability of $\mc{F}$ but in the case $\mc{F}\simeq \mc{O}_A$.

b) $a<0$: we push forward along $\pi_2$:
\begin{eqnarray*}
\Hom(\mc{L},\pi_1^\star\mc{F})&\cong& \cH^0(\pi_1^\star\mc{F}\otimes\mc{L}^\vee)\\
&\cong&
\cH^0(\mc{M}^\vee\otimes \pi_{2\star}(\underbrace{\pi_1^\star(\mc{F}\otimes\mc{M}^\vee)\otimes \pi_3^\star\mc{M}^\vee\otimes s^\star\mc{O}(-bH)}_{=:\mc{H}})).
\end{eqnarray*}
Again, we restrict $\mc{H}$ to a fibre $\pi_2^{-1}(x),$ $x\in A$ and then take global sections:
\begin{eqnarray*}
\cH^0(\mc{H}|_{\pi_2^{-1}(x)})&\cong&\cH^0(\pi_1^\star(\mc{F}\otimes\mc{M}^\vee)\otimes \pi_3^\star\mc{M}^\vee\otimes  s^\star t_x^\star\mc{O}(-bH))\\
&\cong&\Hom(\underbrace{\pi_1^\star\mc{M}\otimes \pi_3^\star\mc{M}\otimes s^\star t_x^\star\mc{O}(bH)}_{=:\mc{L}'},\pi_1^\star\mc{F}).
\end{eqnarray*}
But this leads to a contradiction to Proposition \ref{propabstab}: The first Chern class of $\mc{L}'$ is $aH^{\boxplus2}+bs^\star H$ and by assumption we have
\[\frac{f}{r}\leq 3a+b<2a+b.\]

c) $b<0$: We push forward along $s_3$:
\begin{eqnarray*}
\Hom(\mc{L},\pi_1^\star\mc{F})&\cong& \cH^0(\pi_1^\star\mc{F}\otimes\mc{L}^\vee)\\
&\cong&
\cH^0(\mc{O}(-bH)\otimes s_{3\star} (\underbrace{\pi_1^\star(\mc{F}\otimes\mc{M}^\vee) \otimes    \pi_2^\star\mc{M}^\vee\otimes \pi_3^\star\mc{M}^\vee}_{=:\mc{E}})).
\end{eqnarray*}
The fibre of $s_3$ over a point $x\in A$ can be identified with $A^2$ as follows:
\[A^2\rightarrow s_3^{-1}(x),\quad(y,z)\mapsto (y,z,x-(y+z)).\]
Under this identification $\pi_1$ and $\pi_2$ remain the same and $\pi_3$ is replaced by $t_x\circ\iota\circ s,$ where $\iota\colon A\rightarrow A$ is the inverse. Thus we see that the restriction of $\mc{E}$ to $s_3^{-1}(x)$ is isomorphic to
\[\pi_1^\star\mc{F}\otimes\underbrace{\pi_1^\star\mc{M}^\vee \otimes\pi_2^\star\mc{M}^\vee\otimes s^\star\iota^\star t_x^\star\mc{M}^\vee}_{=:\mc{L}''}.\]
Now, exactly as in b) above we get a contradiction to Proposition \ref{propabstab}: The first Chern class of $\mc{L}''$ is $a(H^{\boxplus}+s^\star H)$ and by assumption we have
\[\frac{f}{r}\leq 3a+b<3a.\qedhere\]
\end{proof}

\begin{proof}[Proof of Theorem \ref{thmabsurf3}] First of all we can reduce to the case that $\mc{F}$ is a line bundle in the same way as in the proof of Theorem \ref{thmregsurfcasesr1}. Next, let $\mc{L}\subset\tbmc{F}$ be a destabilising line subbundle. As usual we see that this yields a nontrivial element in 

\[\Hom(\mc{L},\tbmc{F})\cong \Hom(\mc{L},\psi_{3\star}\sigma_3^\star q^\star\mc{F}) \cong \Hom(\psi_3^\star\mc{L},\sigma_3^\star q^\star\mc{F}).\]

By the same reasoning as in Section \ref{sectionhighern} we may assume that $\mc{L}\simeq \mc{M}_{\tb{A}}\otimes m_3^\star\mc{O}(bH)),$ $c_1(\mc{M})=aH,$ $a,b\in\ZZ$ (we do not have any term of the form $\mc{O}(c\delta_3)$). Thus $\psi_3^\star\mc{L}$ descends to a line bundle $\mc{L}'$ on $A\times\ta{A}$. Furthermore, the pullback $(\id_A\times\psi)^\star\mc{L}'$ on $A\times\widetilde{A\times A}$ descends to a line bundle $\mc{L}''$ on $A^3$. Thus we have
\begin{eqnarray*}
\Hom(\psi_3^\star\mc{L},\sigma_3^\star q^\star\mc{F})&=&\Hom(\mc{L}',q^\star\mc{F})\\[4pt]
&\subseteq&\Hom((\id_A\times\psi)^\star\mc{L}',(\id_A\times\sigma)^\star\pi_1^\star\mc{F})\\[4pt]
&=&\Hom(\mc{L}'',\pi_1^\star\mc{F}).
\end{eqnarray*}
Looking more closely we see that $\mc{L}''\simeq \mc{M}^{\boxtimes3}\otimes s_3^\star\mc{O}(bH).$
Altogether we end up with a homomorphism on $A^3$:
\[\mc{M}^{\boxtimes 3}\otimes s_3^\star\mc{O}(bH)\rightarrow \pi_1^\star\mc{F}.\]
Now by Lemma \ref{lemabsurfgeom5} the destabilising condition on $\mc{L}$ reads
\[3a+b\geq f,\]
which exactly corresponds to the \em tautological destabilising condition \em of Proposition \ref{propabsurf3lb}.\end{proof}

\subsection{Restriction to the Kummer Surface}\label{subsectionkum}
In Section \ref{subsectionabsurf2} we proved the stability of tautological sheaves on the Hilbert scheme of two points on an abelian surface $A$. This Hilbert scheme contains the Kummer surface $\Km A$ associated with $A$. Recall that $\Km A$ is a $K3$ surface. In this section we shall prove the stability of the restriction of certain tautological sheaves to the Kummer surface. Throughout this Section we fix an arbitrary(!) polarised abelian surface $(A,H).$\\

Let us recall the famous Kummer construction. The natural involution on $A$ given by the inverse $\iota\colon a\mapsto -a$ has $16$ fixed points (which are exactly the $16$ two-torsion points). Thus if we blow up the $16$ $A_1$-singularities in $A/\iota$, we obtain a smooth surface which can easily be shown to be $K3$. It is the so-called \em Kummer surface $\Km A$ associated with $A$\em.

An alternative construction of $\Km A$ goes as follows: Let $b\colon\tilde{A}\rightarrow A$ denote the simultaneous blowup of all fixed points of the involution $\iota$ on $A$ and denote by $E_1,\dots,E_{16}$ the exceptional divisors. On $\tilde{A}$ we still have an involution which fixes the $E_l$ pointwise. We consider the quotient $\tau\colon\tilde{A}\rightarrow \Km A$ which is a degree two covering onto the associated Kummer surface. By \cite[VIII Prop.\ 5.1]{BHPV04} we have a monomorphism
\[\alpha=\tau_! b^\star\colon \cH^2(A,\ZZ)\rightarrow \cH^2(\Km A,\ZZ)\]
satisfying
\[\alpha(x)\alpha(y)=2xy \text{ for all }x,y\in \cH^2(A,\ZZ).\]
We have an inclusion of finite index
\[\alpha(\NS A)\oplus \bigoplus_{l=1}^{16}\ZZ N_l\subset\NS(\Km A),\]
where $N_l=\tau(E_l)$. It is well known that $E_l^2=-1$ and $N_l^2=-2$. Furthermore, the class $\sum_lN_l$ is $2$-divisible and we have $\tau^\star(\frac{1}{2}\sum_lN_l)=\sum_lE_l$ and $\tau^\star N_l=2E_l$.

Finally, we have
\begin{equation}\label{eqihsexakum}
\NS \tilde{A}\cong b^\star\NS A\oplus \bigoplus_{l=1}^{16}\ZZ E_l\quad\text{and}\quad \Pic^0\tilde{A}\cong b^\star\Pic^0A.\end{equation}

We consider the following diagram:
\[\xymatrix{
A \ar@{^{(}->}[r]^(.4)u & A\times A \ar[rd]^s  \\
\tilde{A} \ar[u]^b \ar@{^{(}->}[r]^(.4){\widetilde{j}} \ar[d]_(.4)\tau & \widetilde{A\times A} \ar[u]^\sigma \ar[r] \ar[d]_(.4)\psi  & A\\
\Km A \ar@{^{(}->}[r]^j & \ta{A} \ar[ru]_\rho &,
}\]
where we define $u(x):=(x,-x)$ and $\widetilde{j}$ as the pullback of $u$ and $\sigma$. With this convention we have $\pi_1\circ u=\id_A$. Finally, note that since $\psi$ is flat, we have $j^\star\circ\psi_\star\simeq \tau_\star\circ\widetilde{j}^\star$.

We define a class
\[H_N^\Km:=N\alpha(H)-\frac{1}{2}\sum_lN_l\]
on $\Km A,$ which is ample for sufficiently large $N.$ Recall that on $\ta{A}$ we defined two different polarisations $H_N:=NH_{\ta{A}}-\delta$ and $H_N^m:=N(H_{\ta{A}}+m^\star H)-\delta$.

\begin{lem}
We have
\begin{eqnarray*}
H_N^\Km = j^\star H_N=j^\star H_N^m.
\end{eqnarray*}
\end{lem}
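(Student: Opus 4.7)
The plan is to prove first $j^\star H_N = H_N^\Km$; the equality $j^\star H_N^m = H_N^\Km$ will then follow immediately, since $\Km A = m^{-1}(0)$ forces $m\circ j$ to be the constant map to $0\in A$ and hence $j^\star m^\star H = 0$. To establish $j^\star H_N = H_N^\Km$ I will verify the two separate identities $j^\star H_{\ta{A}} = \alpha(H)$ and $j^\star\delta = \frac{1}{2}\sum_l N_l$. The key tool is the commutative bottom square $\psi\circ\widetilde{j} = j\circ\tau$ of the diagram. The morphism $\tau$ is a finite surjective double cover of smooth surfaces, so $\tau_\star\tau^\star = 2\cdot\mathrm{id}$ and hence $\tau^\star\colon\NS(\Km A)\to\NS(\widetilde{A})$ is injective. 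It therefore suffices to verify both identities after applying $\tau^\star$.

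For the first identity, Lemma \ref{eqhigherninductg} with $n=2$ gives $\psi^\star H_{\ta{A}} = \sigma^\star H^{\boxplus 2}$, so
\[
\tau^\star j^\star H_{\ta{A}} \;=\; \widetilde{j}^\star\psi^\star H_{\ta{A}} \;=\; \widetilde{j}^\star\sigma^\star H^{\boxplus 2} \;=\; b^\star u^\star H^{\boxplus 2} \;=\; b^\star H + b^\star\iota^\star H \;=\; 2\,b^\star H,
\]
where I use that $\iota$ acts as $-1$ on $H^1(A,\ZZ)$ and hence trivially on $H^2(A,\ZZ)$, so $\iota^\star H = H$ in $\NS A$. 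On the other hand, from $\alpha(H) = \tau_\star b^\star H$ together with $\tau^\star\tau_\star = \mathrm{id}+\widetilde{\iota}^\star$ (where $\widetilde{\iota}$ is the lifted involution on $\widetilde{A}$, under which $b^\star H$ is invariant), we obtain $\tau^\star\alpha(H) = 2\,b^\star H$, matching the previous computation.

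For the second identity, Corollary \ref{corhigherninduct} in the case $n=2$ (with the convention $\delta_1 = 0$) gives $\psi^\star\delta = [D]$, where $D$ is the exceptional divisor of $\sigma\colon\widetilde{A\times A}\to A\times A$. By definition $\widetilde{j}$ realises $\widetilde{A}$ as the fibre product of $u\colon A\to A\times A$ with $\sigma$, and $u(A)$ meets the diagonal $\Delta$ transversally in the sixteen two-torsion points of $A$ (the tangent lines $\{(v,-v)\}$ and $\{(v,v)\}$ span $T_xA\oplus T_xA$ in characteristic zero). Consequently the fibre product is precisely the blow-up $b\colon\widetilde{A}\to A$ at these sixteen points, and $\widetilde{j}^\star[D] = \sum_{l=1}^{16}E_l$. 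Comparing with the recalled formula $\tau^\star\bigl(\tfrac{1}{2}\sum_l N_l\bigr) = \sum_l E_l$ and using injectivity of $\tau^\star$ gives $j^\star\delta = \tfrac{1}{2}\sum_l N_l$. Substituting both computations into $H_N = NH_{\ta{A}}-\delta$ yields $H_N^\Km$, and the lemma follows. The main obstacle is really just the transversality step, which identifies $\widetilde{A}$ as the strict transform of $u(A)$ inside $\widetilde{A\times A}$; once this is granted, the restriction of the exceptional divisor to $\widetilde{A}$ is exactly $\sum_l E_l$ and the rest of the argument is bookkeeping.
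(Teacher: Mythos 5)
Your proof is correct and follows essentially the same route as the paper: the paper's (very terse) proof consists precisely of the two observations you verify — that $j^\star m^\star H=0$ because $\Km A$ is a fibre of $m$, and that the exceptional divisor of $\rho$ meets $\Km A$ in $\sum_l N_l$ — together with the implicit identification $j^\star H_{\ta{A}}=\alpha(H)$. Your transversality argument identifying $\widetilde{A}$ with the fibre product and computing $\widetilde{j}^\star[D]=\sum_l E_l$ is a careful spelling-out of exactly what the paper asserts without proof.
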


\begin{proof} It is clear that $j^\star m^\star H=0$ since $\Km A$ is a fibre of $m$. Also observe that the class of the intersection of the exceptional divisor of the Hilbert$-$Chow morphism $\rho$ with $\Km A$ is exactly $\sum_lN_l$.\end{proof}

\begin{defi}
Let $\mc{F}$ be a sheaf on $A$. We set
\begin{eqnarray*}
\kmmc{F}:=\tau_\star b^\star\mc{F}.
\end{eqnarray*}
\end{defi}

\begin{lem}
The sheaf $\kmmc{F}$ is the restriction of the tautological sheaf $\tamc{F}$:
\begin{eqnarray*}
j^\star\tamc{F}\simeq \kmmc{F}.
\end{eqnarray*}
\end{lem}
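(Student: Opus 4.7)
The proof is essentially a base-change argument, so I would keep it short and exploit the structure already laid out in the diagram on the previous page.

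My plan is to rewrite both sides using the alternative description of the tautological sheaf and then commute pullback with pushforward along the evident Cartesian square. Concretely, by Remark \ref{remtautdefres}, in the case $n=2$ we actually have $\tamc{F}\simeq\widetilde{\tamc{F}}=\psi_\star\sigma^\star q_1^\star\mc{F}$, where $q_1=\pi_1\colon A\times A\to A$ is the first projection. So the goal is to identify
\[
j^\star\psi_\star\sigma^\star q_1^\star\mc{F}\;\simeq\;\tau_\star b^\star\mc{F}.
\]

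The first step is to apply flat base change along the Cartesian square
\[\xymatrix{
\tilde{A}\ar[r]^{\widetilde{j}}\ar[d]_\tau & \widetilde{A\times A}\ar[d]^\psi\\
\Km A\ar[r]^j & \ta{A}
}\]
which is already noted in the text just before the statement: since $\psi$ is a flat (finite, degree two) $\Sy{2}$-quotient, we have $j^\star\psi_\star\simeq\tau_\star\widetilde{j}^\star$. This turns the left-hand side into $\tau_\star\widetilde{j}^\star\sigma^\star q_1^\star\mc{F}=\tau_\star(\sigma\widetilde{j})^\star q_1^\star\mc{F}$.

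The second step is to use commutativity of the upper square in the big diagram, $\sigma\circ\widetilde{j}=u\circ b$ (which holds because $\widetilde{j}$ is defined as the pullback of $u$ and $\sigma$), to rewrite this as $\tau_\star b^\star u^\star q_1^\star\mc{F}$. Finally, the defining property $q_1\circ u=\pi_1\circ u=\id_A$ gives $u^\star q_1^\star\mc{F}\simeq\mc{F}$, so the expression reduces to $\tau_\star b^\star\mc{F}=\kmmc{F}$, as desired.

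There is no real obstacle here: the only point that needs a line of justification is flatness of $\psi$ (immediate, as it is a finite degree-two quotient), and the identification $(\sigma\widetilde{j})^\star\simeq\widetilde{j}^\star\sigma^\star$ (standard since pullback of sheaves is functorial along composition of morphisms). The step one needs to check if one is being careful is that Remark \ref{remtautdefres} really gives an isomorphism and not merely an identification outside codimension four in the $n=2$ case, but this is explicitly asserted there.
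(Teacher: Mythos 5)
Your proposal is correct and follows exactly the same chain of identifications as the paper's proof: rewrite $\tamc{F}$ as $\psi_\star\sigma^\star\pi_1^\star\mc{F}$, apply the base-change isomorphism $j^\star\psi_\star\simeq\tau_\star\widetilde{j}^\star$ noted just before the statement, use $\sigma\circ\widetilde{j}=u\circ b$, and conclude with $\pi_1\circ u=\id_A$. Nothing to add.
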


\begin{proof} We have
\begin{eqnarray*}
\begin{array}{lclclclclclr}
j^\star\tamc{F}&=&j^\star\psi_\star \sigma^\star\pi_1^\star\mc{F} &\simeq&\tau_\star\widetilde{j}^\star \sigma^\star\pi_1^\star\mc{F} &\simeq&\tau_\star b^\star u^\star\pi_1^\star\mc{F}&\simeq &\tau_\star b^\star\mc{F}&=&\kmmc{F}.&\qedhere
\end{array}
\end{eqnarray*}
\end{proof}

Now we want to prove the stability of $\kmmc{F}$ in the case that $\mc{F}$ is of rank one or two. The method is completely analogous to the one used in the preceding sections. Thus we will leave a few details to the reader. As in the previous cases we begin with the analysis of line subbundles in the pullback $b^\star\mc{F}$:

\begin{prop}\label{propabsurfkum}
Let $\mc{F}$ be a $\mu_H$-stable sheaf on $A$ of rank $r$ and first Chern class $f\in\NS A.$ Then $b^\star\mc{F}$ does not contain any line bundle $\mc{L}'=b^\star\mc{G}\otimes\mc{O}(\sum_la_lE_l)$ with $\mc{G}\in\Pic(A),$ $c_1(\mc{G})=g'$ satisfying
\[H.g'\geq\frac{1}{r}H.f\]
but in the case $r=1,$ $\mc{G}\simeq \mc{F}$.
\end{prop}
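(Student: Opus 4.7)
The plan is to mimic the strategy of Propositions \ref{propabstab} and \ref{propabsurf3lb}: use adjunction and the projection formula to push the Hom-space on $\tilde{A}$ down to $A$, and then apply $\mu_H$-stability of $\mc{F}$. The situation here is simpler than in the preceding sections because we only need to deal with the blowup morphism $b\colon\tilde{A}\to A$, not a flat covering.

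First I would rewrite
\[\Hom_{\tilde{A}}(b^\star\mc{G}\otimes\mc{O}(\textstyle\sum_la_lE_l),b^\star\mc{F})\cong \cH^0\bigl(\tilde{A},b^\star(\mc{G}^\vee\otimes\mc{F})\otimes\mc{O}(-\textstyle\sum_la_lE_l)\bigr)\]
and then push forward via $b$. By the projection formula this becomes
\[\cH^0\bigl(A,(\mc{G}^\vee\otimes\mc{F})\otimes b_\star\mc{O}(-\textstyle\sum_la_lE_l)\bigr).\]
The key calculation is then the description of $b_\star\mc{O}(-\sum_la_lE_l)$: since $b$ is a blowup of smooth points with fiber $\PP^1$ over each $p_l$, and $\mc{O}(E_l)|_{E_l}\simeq\mc{O}_{\PP^1}(-1)$, one has $b_\star\mc{O}(kE_l)\simeq\mc{O}_A$ for $k\le 0$ and $b_\star\mc{O}(kE_l)\simeq\mathfrak{m}_{p_l}^{k}$ for $k\ge 0$. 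Splitting the indices $l$ by the sign of $a_l$, we get
\[b_\star\mc{O}(-\textstyle\sum_la_lE_l)\simeq\prod_{a_l>0}\mathfrak{m}_{p_l}^{a_l}\subseteq\mc{O}_A,\]
i.e.\ an ideal sheaf inside $\mc{O}_A$ (equal to $\mc{O}_A$ when all $a_l\le 0$).

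Tensoring with the torsion-free sheaf $\mc{G}^\vee\otimes\mc{F}$ preserves this injection, so altogether we obtain an inclusion
\[\Hom_{\tilde{A}}(\mc{L}',b^\star\mc{F})\hookrightarrow \cH^0(A,\mc{G}^\vee\otimes\mc{F})=\Hom_A(\mc{G},\mc{F}).\]
The proof concludes by applying $\mu_H$-stability of $\mc{F}$: any nonzero homomorphism $\mc{G}\to\mc{F}$ from a line bundle forces $H.g'<H.f/r$ unless $r=1$ and the map is an isomorphism onto $\mc{F}$; the assumption $H.g'\ge H.f/r$ therefore rules out everything but the excluded case $r=1,\ \mc{G}\simeq\mc{F}$.

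The only mildly technical step is the push-forward computation for $b_\star\mc{O}(-\sum a_lE_l)$, but this is completely standard for a blowup of reduced points in a smooth surface; no obstacle is expected beyond verifying that an ideal-sheaf factor inside $\mc{O}_A$ suffices to embed the Hom-space into $\Hom_A(\mc{G},\mc{F})$.
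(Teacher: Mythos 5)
Your proof is correct and follows the same overall architecture as the paper's: reduce $\Hom_{\tilde{A}}(\mc{L}',b^\star\mc{F})$ to a subspace of $\Hom_A(\mc{G},\mc{F})$ and then invoke $\mu_H$-stability, with the case $r=1$, $\mc{G}\simeq\mc{F}$ as the only survivor. The one genuine difference is how you dispose of the exceptional twist $\mc{O}(\sum_la_lE_l)$: the paper uses adjunction $b^\star\dashv b_\star$ together with an induction on the coefficients $a_l$ borrowed from \cite[Prop.\ 3.1]{Wan12}, whereas you compute $b_\star\mc{O}(-\sum_la_lE_l)$ explicitly as an ideal sheaf via the projection formula. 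Your route is more self-contained, at the cost of needing the locally free reduction up front (the projection formula and the exactness of $-\otimes(\mc{G}^\vee\otimes\mc{F})$ both require it); the paper dispatches this with its opening sentence ``as usual we may assume $\mc{F}$ is locally free,'' and you should do the same, since ``torsion-free'' alone does not make tensoring with $\mc{G}^\vee\otimes\mc{F}$ left exact. Two small corrections to your write-up: the pushforward facts are stated with the signs reversed --- the correct statements are $b_\star\mc{O}(kE_l)\simeq\mc{O}_A$ for $k\geq0$ and $b_\star\mc{O}(kE_l)\simeq\mathfrak{m}_{p_l}^{-k}$ for $k\leq0$ --- although the formula you actually use, $b_\star\mc{O}(-\sum_la_lE_l)\simeq\prod_{a_l>0}\mathfrak{m}_{p_l}^{a_l}\subseteq\mc{O}_A$, is the right one, so nothing downstream is affected. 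Neither point is a gap in the idea; both are one-line fixes.
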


\begin{proof} As usual we may assume that $\mc{F}$ is locally free. We want to show that
\[\Hom_{\tilde{A}}(b^\star\mc{G}\otimes\mc{O}(\sum_la_lE_l),b^\star\mc{F})\]
vanishes. Using adjunction ($b^\star\dashv b_\star$) and a similar induction argument as in the proof of \cite[Prop.\ 3.1]{Wan12}, we see that it is enough to prove that
\[\Hom_A(\mc{G},\mc{F})=0.\]
This easily follows from the stability of $\mc{F}$ if $\mc{F}\not\simeq\mc{G}$.\end{proof}

Next, we will show that Proposition \ref{propabsurfkum} implies that there are no destabilising line subbundles in $\kmmc{F}$. We only need to calculate slopes.

\begin{lem}
Let $\mc{F}$ be a sheaf on $A$ of rank $r$ and first Chern class $f.$ We have
\begin{eqnarray*}
c_1(\tau_\star b^\star\mc{F})&=&\alpha(f)-\frac{r}{2}\sum_lN_l.
\end{eqnarray*}
\end{lem}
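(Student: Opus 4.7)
The plan is to apply Grothendieck--Riemann--Roch to the finite flat double cover $\tau\colon\tilde{A}\to\Km A$. Writing $\mc{G}:=b^\star\mc{F}$, I have $\rk\mc{G}=r$ and $c_1(\mc{G})=b^\star f$. Since $\tau$ is finite, only $\tau_\star$ appears (no higher direct images), and GRR yields
\[\ch(\tau_\star\mc{G})\cdot\td_{\Km A}=\tau_\star\bigl(\ch(\mc{G})\cdot\td_{\tilde{A}}\bigr)\]
in the cohomology ring of $\Km A$.

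Extracting the degree one component, and using $\tau_\star 1=2$ so that $\rk\tau_\star\mc{G}=2r$, the displayed GRR-identity simplifies to
\[c_1(\tau_\star\mc{G})+2r\cdot\td_1(\Km A)=\tau_\star c_1(\mc{G})+r\cdot\tau_\star\td_1(\tilde{A}).\]
Now $\Km A$ is a $K3$ surface, so $K_{\Km A}=0$ and $\td_1(\Km A)=0$. On the other hand $A$ is an abelian surface (hence $K_A=0$) and $\tilde{A}$ is the simultaneous blow-up of $A$ at its sixteen two-torsion points, so $K_{\tilde{A}}=\sum_{l=1}^{16}E_l$ and $\td_1(\tilde{A})=-\tfrac{1}{2}\sum_l E_l$. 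Combined with the definitional identity $\tau_\star c_1(\mc{G})=\tau_\star b^\star f=\alpha(f)$ (recall $\alpha=\tau_!b^\star$), the claim reduces to the equality $\tau_\star E_l=N_l$ in $\cH^2(\Km A,\ZZ)$.

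That last equality is immediate from the relation $\tau^\star N_l=2E_l$ recorded earlier in the excerpt: the projection formula combined with $\deg\tau=2$ gives $2N_l=\tau_\star\tau^\star N_l=\tau_\star(2E_l)$, hence $\tau_\star E_l=N_l$. No step is genuinely delicate; the only point that requires care is the bookkeeping of the various factors of two (the degree of $\tau$, the Todd coefficient $-\tfrac{1}{2}K$, and the ramification relation $\tau^\star N_l=2E_l$), since it is their interplay that yields exactly the coefficient $-\tfrac{r}{2}$ in front of $\sum_l N_l$ in the final formula.
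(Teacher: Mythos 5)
Your proof is correct and follows essentially the same route as the paper: both apply Grothendieck--Riemann--Roch to the double cover $\tau$, use $c_1(\omega_{\tilde{A}})=\sum_lE_l$ (equivalently $\td_1(\tilde{A})=-\tfrac{1}{2}\sum_lE_l$, with $\td_1(\Km A)=0$ absorbed into the relative Todd class $\td_\tau$ in the paper's formulation), and extract the degree-one term. Your explicit verification that $\tau_\star E_l=N_l$ via the projection formula is a detail the paper leaves implicit, but it is the same computation.
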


\begin{proof} We have $c_1(\omega_{\tilde{A}})=\sum_lE_l$. Thus the Grothendieck$-$Riemann$-$Roch theorem reads
\begin{eqnarray*}
\begin{array}{lclclr}
\ch(\tau_\star b^\star\mc{F})&=& \tau_\star(\ch(b^\star\mc{F})\td_\tau)&=&
\tau_\star((r,b^\star f,\dots)(1,-\frac{1}{2}\sum_lE_l,\dots))\\[4pt]
&&&=&\tau_\star(r,b^\star f-\frac{r}{2}\sum_lE_l,\dots).&\qedhere
\end{array}
\end{eqnarray*}
\end{proof}

Let $\mc{L}$ be a line bundle on $\Km A$. By equation (\ref{eqihsexakum}) in Section \ref{subsectionihsexamples} there is a line bundle $\mc{G}$ on $A$ and integers $a_l$ such that
\[\tau^\star\mc{L}\simeq b^\star\mc{G}\otimes\mc{O}(\sum_la_lE_l).\]
Set $g:=c_1(\mc{G})$. Note that since $\mc{L}$ comes from $\Km A$, the line bundle $\mc{G}$ has to be symmetric, i.e.\ $\iota^\star\mc{G}\simeq\mc{G}.$

\begin{cor}
Let $\mc{L}$ be a line bundle on $\Km A$ as above. We have
\begin{eqnarray*}
\mu_{H_N^\Km}(\kmmc{F})&=&\frac{1}{r}NH.f-4 \text{ and}\\[4pt]
\mu_{H_N^\Km}(\mc{L})&=&NH.g+\frac{1}{2}\sum_la_l.
\end{eqnarray*}
\end{cor}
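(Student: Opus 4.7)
\medskip\noindent\textbf{Proof proposal.} The approach is direct: in each case compute $c_1(-)\cdot H_N^{\Km}$ and divide by the rank. I treat the two cases in turn.

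For the line bundle $\mc L$, the plan is to pull back along the degree-two covering $\tau$ and use the projection identity $\tau^\star x\cdot\tau^\star y = 2\,x.y$ on $\Km A$. Since $\alpha=\tau_! b^\star$ and every class in $b^\star\NS A$ is $\iota$-invariant, one has $\tau^\star\alpha(H)=2b^\star H$; combined with the given $\tau^\star N_l = 2E_l$ this yields $\tau^\star H_N^{\Km} = 2Nb^\star H - \sum_l E_l$. Using the presentation $\tau^\star c_1(\mc L) = b^\star g + \sum_l a_l E_l$, the orthogonality $b^\star(\cdot)\cdot E_l=0$, and $E_l\cdot E_m=-\delta_{lm}$, the intersection collapses:
\[
2\,c_1(\mc L)\cdot H_N^{\Km} = \bigl(b^\star g + \textstyle\sum_l a_l E_l\bigr)\cdot\bigl(2Nb^\star H - \sum_l E_l\bigr) = 2N\,g.H + \sum_l a_l,
\]
from which the formula for $\mu_{H_N^{\Km}}(\mc L)$ follows upon dividing by $2$.

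For $\kmmc F = \tau_\star b^\star\mc F$, the key preliminary — and the point I would flag, since slipping on it spoils the final constant — is that $\rk\kmmc F = 2r$, not $r$, because $\tau$ is finite of degree two and $b^\star\mc F$ has rank $r$. The preceding lemma gives $c_1(\kmmc F) = \alpha(f) - \tfrac{r}{2}\sum_l N_l$, so I need to intersect with $H_N^{\Km} = N\alpha(H) - \tfrac{1}{2}\sum_l N_l$. Four terms arise. The main one is $\alpha(f)\cdot\alpha(H)=2\,f.H$ by the given identity for $\alpha$. The cross terms $\alpha(f)\cdot N_l$ and $\alpha(H)\cdot N_l$ both vanish: pulling back to $\tilde A$ they become $2b^\star(\cdot)\cdot 2E_l = 0$, since $b^\star\NS A$ is orthogonal to the exceptional divisors. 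Finally $\tfrac{r}{4}\bigl(\sum_l N_l\bigr)^2 = \tfrac{r}{4}\sum_l N_l^2 = -8r$, using $N_l^2=-2$ and $N_l\cdot N_m=0$ for $l\neq m$. Summing yields
\[
c_1(\kmmc F)\cdot H_N^{\Km} = 2N\,H.f - 8r,
\]
and dividing by $\rk\kmmc F = 2r$ produces $\mu_{H_N^{\Km}}(\kmmc F) = \tfrac{1}{r}N H.f - 4$.

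The computation is essentially bookkeeping; the only pitfalls are the rank doubling under $\tau_\star$ and the appearance of $\sum_l N_l^2 = -32$, which together fix the constant $-4$.
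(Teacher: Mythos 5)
Your proof is correct and follows essentially the same route as the paper: pull back along the degree-two covering $\tau$ (equivalently, use $\alpha(x)\alpha(y)=2xy$, $N_l^2=-2$ and the orthogonality of $\alpha(\NS A)$ with the $N_l$), and divide by the covering degree. You also correctly flag the two points where the constant $-4$ comes from, namely $\rk\kmmc{F}=2r$ and $\bigl(\sum_l N_l\bigr)^2=-32$, which the paper handles by computing $\bigl(\sum_l E_l\bigr)^2=-16$ upstairs on $\tilde{A}$.
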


\begin{proof}
We pullback all classes to $\tilde{A}$: Note that $\tau^\star(\frac{1}{2}\sum_lN_l)=\sum_lE_l$ and $\tau^\star\alpha(f)=2b^\star f$ for all $f\in\NS A$. Thus we have $\alpha(f).\alpha(H)=2f.H.$ Furthermore, we have $\big{(}\sum_lE_l\big{)}^2=16\cdot(-1)=-16$ and $\big{(}\sum_lE_l\big{)}\big{(}\sum_la_lE_l\big{)}=-\sum_la_l.$ Finally, we have to divide everything by two because we pulled back along a degree two covering.
\end{proof}

\begin{cor}\label{corabsurfkum}
Let $\mc{F}$ be a non-symmetric (i.e.\ $\iota^\star\mc{F}\not\simeq\mc{F}$) $\mu_H$-stable sheaf on $A.$ Then $\kmmc{F}$ does not contain $\mu_{H_N^\Km}$-destabilising line subbundles for all $N\gg0.$
\end{cor}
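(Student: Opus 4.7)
The plan is to reduce to Proposition \ref{propabsurfkum} via the adjunction $\tau^\star\dashv\tau_\star$, mirroring the approach used throughout this chapter. Suppose $\mc{L}\subset\kmmc{F}$ is a $\mu_{H_N^\Km}$-destabilising line subbundle for $N\gg0$. Since $\kmmc{F}=\tau_\star b^\star\mc{F}$, adjunction translates the inclusion into a nonzero homomorphism $\varphi\colon\tau^\star\mc{L}\to b^\star\mc{F}$ on $\tilde{A}$.

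The next step is to analyse $\tau^\star\mc{L}$. By the decomposition (\ref{eqihsexakum}) we may write
\[\tau^\star\mc{L}\simeq b^\star\mc{G}\otimes\mc{O}\bigl(\textstyle\sum_l a_l E_l\bigr)\]
for some $\mc{G}\in\Pic A$ and integers $a_l$. The crucial observation is that $\mc{G}$ is automatically symmetric: since $\tau$ is the $\iota$-quotient we have $\iota^\star\tau^\star\mc{L}\simeq\tau^\star\mc{L}$, and since $\iota$ fixes each $E_l$ pointwise we have $\iota^\star\mc{O}(E_l)\simeq\mc{O}(E_l)$. Comparing the two sides of the decomposition (and using $\iota\circ b=b\circ\iota$) forces $b^\star\iota^\star\mc{G}\simeq b^\star\mc{G}$; the injectivity of $b^\star\colon\Pic A\hookrightarrow\Pic\tilde{A}$, itself a further consequence of (\ref{eqihsexakum}), then yields $\iota^\star\mc{G}\simeq\mc{G}$.

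Comparing the leading $N$-coefficient in the destabilising inequality $\mu_{H_N^\Km}(\mc{L})\geq\mu_{H_N^\Km}(\kmmc{F})$ and invoking the slope formulas of the preceding corollary gives
\[H.c_1(\mc{G})\geq\frac{1}{r}H.c_1(\mc{F}).\]
This is precisely the destabilising condition excluded in Proposition \ref{propabsurfkum}, so the existence of the nonzero $\varphi$ leaves only the exceptional case $r=1$ and $\mc{G}\simeq\mc{F}$. But then $\mc{F}\simeq\mc{G}$ would be symmetric, contradicting the hypothesis that $\iota^\star\mc{F}\not\simeq\mc{F}$.

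The main obstacle is really just the symmetry of $\mc{G}$; once that is in place, every remaining step is a direct appeal to results already established in this chapter. The key geometric inputs are the pointwise $\iota$-invariance of the exceptional divisors $E_l$ and the injectivity of $b^\star$ on Picard groups, which together convert the elementary $\iota$-equivariance of $\tau^\star\mc{L}$ into the nontrivial statement that its $\mc{G}$-component descends symmetrically from $A$.
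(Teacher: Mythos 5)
Your proposal is correct and follows essentially the same route as the paper: pull back the destabilising line subbundle via $\tau^\star\dashv\tau_\star$, decompose $\tau^\star\mc{L}$ using (\ref{eqihsexakum}), compare leading $N$-coefficients of the slopes, and rule out the exceptional case of Proposition \ref{propabsurfkum} via the non-symmetry of $\mc{F}$. The only difference is cosmetic — you spell out why $\mc{G}$ must be symmetric (pointwise $\iota$-invariance of the $E_l$ plus injectivity of $b^\star$ on $\Pic$), a fact the paper merely asserts in the paragraph preceding the corollary.
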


\begin{proof} Let $\mc{L}$ be a destabilising line subbundle of $\kmmc{F}$. Again, we can write $\tau^\star\mc{L}\simeq b^\star\mc{G}\otimes\mc{O}(\sum_la_lE_l)$ for a symmetric line bundle $\mc{G}\in\Pic A$. The destabilising condition yields
\[H.g\geq \frac{1}{r}H.f.\]
As usal we use adjunction $\tau^\star\dashv \tau_\star$ to obtain a homomorphism $\tau^\star\mc{L}\rightarrow b^\star\mc{F}$. This gives a contradiction to Proposition \ref{propabsurfkum} but in the case $r=1,$ $\mc{G}\simeq \mc{F}$. But this cannot be since $\mc{F}$ was chosen not to be symmetric.\end{proof}

We immediately deduce:

\begin{thm}\label{thmabsurfkumr1}
Let $\mc{F}$ be a non-symmetric rank one torsion-free sheaf on $A.$ Then for all $N$ sufficiently large, $\kmmc{F}=\tau_\star b^\star\mc{F}$ is a rank two $\mu_{H_N^\Km}$-stable sheaf.
\end{thm}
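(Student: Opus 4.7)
The plan is to reduce the statement to Corollary \ref{corabsurfkum}, which already excludes destabilising \emph{line subbundles} of $\kmmc{F}$; the only extra work is to pass from arbitrary rank-one subsheaves of $\kmmc{F}$ to line subbundles, and from arbitrary torsion-free $\mc{F}$ to the line bundle case.

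For the first reduction I would replace $\mc{F}$ by its reflexive hull $\mc{F}^{\vee\vee}$. On the smooth surface $A$ a rank-one reflexive sheaf is a line bundle, the inclusion $\mc{F}\hookrightarrow\mc{F}^{\vee\vee}$ has zero-dimensional cokernel, and applying the exact functor $\tau_\star b^\star$ produces an inclusion $\kmmc{F}\hookrightarrow(\mc{F}^{\vee\vee})^{\Km}$ with zero-dimensional cokernel. Both sheaves then share their rank and first Chern class, hence the same $\mu_{H_N^\Km}$-slope, so a destabilising subsheaf of $\kmmc{F}$ destabilises $(\mc{F}^{\vee\vee})^{\Km}$ as well; non-symmetry of $\mc{F}$ passes to $\mc{F}^{\vee\vee}$. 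We may therefore assume $\mc{F}$ is a non-symmetric line bundle, and then $\kmmc{F}=\tau_\star b^\star\mc{F}$ is locally free of rank two on $\Km A$.

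For the second reduction, let $\mc{E}\subset\kmmc{F}$ be a $\mu_{H_N^\Km}$-destabilising subsheaf; necessarily $\rk\mc{E}=1$. Saturating $\mc{E}$ in $\kmmc{F}$ and then taking the double dual produces a line bundle $\mc{L}:=(\mc{E}^{\mathrm{sat}})^{\vee\vee}$ whose slope is at least $\mu_{H_N^\Km}(\mc{E})$. The natural map $\mc{L}\to\kmmc{F}^{\vee\vee}=\kmmc{F}$ is injective, since its kernel is a torsion-free sheaf supported on the zero-dimensional locus where $\mc{E}^{\mathrm{sat}}\subsetneq\mc{L}$, hence vanishes. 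Thus $\mc{L}\subset\kmmc{F}$ is a $\mu_{H_N^\Km}$-destabilising line subbundle, contradicting Corollary \ref{corabsurfkum} for $N\gg 0$.

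There is no genuinely new obstacle; the entire technical content is already packaged in Corollary \ref{corabsurfkum}. The role of the non-symmetry hypothesis is the point that deserves attention: Proposition \ref{propabsurfkum} leaves open the exceptional case $r=1$, $\mc{G}\simeq\mc{F}$, and this case really can occur on $\tilde{A}$ only when $\mc{F}$ is $\iota$-symmetric, because the pullback $\tau^\star\mc{L}$ of any line bundle on $\Km A$ is forced to be $\iota$-equivariant, hence of the form $b^\star\mc{G}\otimes\mc{O}(\sum_l a_l E_l)$ with $\mc{G}$ symmetric. Non-symmetry of $\mc{F}$ is precisely what rules out $\mc{G}\simeq\mc{F}$ and allows Proposition \ref{propabsurfkum} to exclude every candidate.
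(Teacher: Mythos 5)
Your proposal is correct and follows essentially the same route as the paper: the theorem is deduced directly from Corollary \ref{corabsurfkum}, with the passage from arbitrary rank-one subsheaves to line subbundles (saturation and double dual) and from torsion-free sheaves to line bundles (reflexive hull) handled by the same standard reductions the paper invokes implicitly, and your explanation of why non-symmetry is exactly what kills the exceptional case $r=1$, $\mc{G}\simeq\mc{F}$ of Proposition \ref{propabsurfkum} matches the paper's reasoning. The only slight overstatement is calling $\tau_\star b^\star$ exact --- $b^\star$ is only right exact, and its failure of left exactness is visible precisely when the non-locally-free locus of $\mc{F}$ meets the sixteen two-torsion points --- but this affects the argument only through a torsion kernel supported on the curves $N_l$ and is glossed over by the paper as well.
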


\begin{exa}
We apply the theorem to the case $c_1(\mc{F})=0$. Denote by $\hat{A}$ the dual abelian variety and by $\hat{A}[2]$ its two-torsion points. The assignment $\Pic^0 A\ni\mc{F}\mapsto\kmmc{F}$ gives a map
\[\hat{A}\setminus\hat{A}[2]\rightarrow \mc{M},\]
where $\mc{M}:=\mc{M}_{H_N^\Km}(v)$ is the moduli space of $H_N^\Km$-stable sheaves with
\[v=(2,-\frac{1}{2}\sum_lN_l,-2).\]
Note that $v^2=0$ and the first Chern class $-\frac{1}{2}\sum_lN_l$ is primitive. Hence $\mc{M}$ is smooth of dimension two. Since $\kmmc{F}\simeq\km{(\iota^\star\mc{F})},$ this map is two-to-one. Furthermore, let us consider the case that $\mc{F}$ is symmetric, i.e.\ $\mc{F}\in\hat{A}[2].$ We concentrate on the case $\mc{F}=\mc{O}_A.$ We have extensions
\[0\rightarrow \mc{O}(-\frac{1}{2}\sum_lN_l) \rightarrow \mc{E} \rightarrow \mc{O} \rightarrow 0.\]
The sheaf $\kmmc{O}_X$ is isomorphic to the trivial extension $\mc{O}(-\frac{1}{2}\sum_lN_l)\oplus\mc{O}$ (cf.\ \cite[Lem.\ 17.2]{BHPV04}), which is not stable. On the other hand one can show that every nontrivial extension is $\mu_H$-stable, which can be proven as in Lemma \ref{leminvstab}. The vector space of extensions $\mc{E}$ is two-dimensional and thus we have a $\PP^1\subset\mc{M}$ parametrising the $\mc{E}$.
Altogether we see that $\mc{M}$ is isomorphic to the Kummer surface $\Km\hat{A}$ of the dual abelian surface $\hat{A}.$

If $\mc{F}$ has nontrivial first Chern class $f\in\NS A,$ we may choose a symmetric line bundle $\mc{L}$ satisfying $c_1(\mc{L})=-f$. Then $\mc{F}\otimes\mc{L}$ is in $\Pic^0(A)$ and
\[\km{(\mc{F}\otimes\mc{L})}\simeq \kmmc{F}\otimes \mc{O}(\alpha(-f)).\]
Thus the moduli space containing $\kmmc{F}$ is isomorphic to $\Km\hat{A},$ too.

But by \cite[Thm.\ 1.5]{GH98} the Kummer surfaces $\Km A$ and $\mc{M}\cong \Km\hat{A}$ are isomorphic.
\end{exa}

We finish the section by proving the analogue of Theorem \ref{thmregsurfcasesr2}.

\begin{thm}\label{thmabsurfkumr2}
Let $\mc{F}$ be a $\mu_H$-stable rank two sheaf on $A$ such that $\det\mc{F}$ is not symmetric. Then $\kmmc{F}$ is a $\mu_{H_N}$-stable rank four sheaf on $\Km A.$
\end{thm}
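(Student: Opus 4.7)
The plan is to follow the proof of Theorem~\ref{thmabsurfcasesr2}. After reducing to the locally free case via the reflexive hull (note that non-symmetry of $\det\mc{F}$ forces $\mc{F}$ itself to be non-symmetric), suppose $\mc{E}\subset\kmmc{F}$ is a maximal destabilising subsheaf of rank $s\in\{1,2,3\}$. The case $s=1$ is ruled out by Corollary~\ref{corabsurfkum}. For $s=3$, Grothendieck--Verdier duality along $\tau$ together with $\omega_\tau = \mc{O}(\sum_l E_l) = \tau^\star\mc{O}(\tfrac{1}{2}\sum_l N_l)$ and the projection formula give
\[(\kmmc{F})^\vee\simeq\km{(\mc{F}^\vee)}\otimes\mc{O}\bigl(\tfrac{1}{2}\textstyle\sum_l N_l\bigr),\]
so dualising the rank-one quotient $\kmmc{F}/\mc{E}$ produces a destabilising line subsheaf of $\km{(\mc{F}^\vee)}$ (after untwisting), contradicting Corollary~\ref{corabsurfkum} applied to $\mc{F}^\vee$ (whose determinant is still non-symmetric).

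The essential case is $s=2$. Adjunction converts the inclusion $\mc{E}\hookrightarrow\tau_\star b^\star\mc{F}$ into a non-zero map $\beta\colon\tau^\star\mc{E}\to b^\star\mc{F}$ on $\tilde{A}$, and one splits on $r:=\rk(\im\beta)\in\{1,2\}$. If $r=2$, then $\det\beta$ is a non-zero section of $b^\star\det\mc{F}\otimes\tau^\star(\det\mc{E})^{-1}$. Writing $\tau^\star\det\mc{E}\simeq b^\star\mc{G}\otimes\mc{O}(\sum a_l E_l)$ with $\mc{G}\in\Pic A$ symmetric (as $\det\mc{E}$ descends from $\Km A$), pushing the section down along $b$ forces $\det\mc{F}\otimes\mc{G}^{-1}$ to be effective on $A$. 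Since $\det\mc{F}$ is non-symmetric while $\mc{G}$ is symmetric they cannot be isomorphic, and on the abelian surface $A$ no non-trivial degree-zero line bundle is effective, whence $(c_1(\det\mc{F})-c_1(\mc{G})).H>0$ strictly. On the other hand, the same pull-back-to-$\tilde{A}$ slope computation used for Corollary~\ref{corabsurfkum} gives $c_1(\mc{E}).H_N^{\Km} = N\,c_1(\mc{G}).H + \tfrac{1}{2}\sum_l a_l$, so the destabilising condition forces $c_1(\mc{G}).H\geq c_1(\det\mc{F}).H$ in leading order as $N\to\infty$, the desired contradiction.

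In the sub-case $r=1$, the saturation $\mc{L}'\simeq b^\star\mc{G}'\otimes\mc{O}(\sum a_lE_l)$ of $\im\beta$ inside $b^\star\mc{F}$ is a line bundle on $\tilde{A}$. The exactness of $\tau_\star$ embeds $\mc{E}$ into the rank-two subsheaf $\tau_\star\im\beta\subset\kmmc{F}$, and maximality of $\mc{E}$ among destabilising subsheaves forces the two to coincide. A Grothendieck--Riemann--Roch computation then yields
\[c_1(\tau_\star\mc{L}') = \alpha(c_1(\mc{G}')) + \textstyle\sum_l a_l N_l - \tfrac{1}{2}\sum_l N_l,\]
and combining with the intersection numbers from Section~\ref{subsectionabsurfgeom2} converts the destabilising condition on $\mc{E}$ into the leading-order inequality $c_1(\mc{G}').H\geq c_1(\mc{F}).H/2$, which is the tautological destabilising condition ruled out by Proposition~\ref{propabsurfkum}. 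The main technical obstacle is this last Riemann--Roch book-keeping: one has to track the ramification contribution $\tfrac{1}{2}\sum_l N_l$ carefully enough that the leading-order inequality is sharp enough to trigger Proposition~\ref{propabsurfkum}, rather than falling short by a bounded correction coming from the branch divisor.
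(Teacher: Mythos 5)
Your proposal is correct and follows essentially the same strategy as the paper's proof, which likewise imitates Theorem \ref{thmregsurfcasesr2}: adjunction to $\beta\colon\tau^\star\mc{E}\to b^\star\mc{F}$ on $\tilde{A}$, a case split on $\rk\im\beta$, the symmetry of line bundles descending from $\Km A$ against the non-symmetry of $\det\mc{F}$, and Proposition \ref{propabsurfkum} for the rank-one-image case. The only (harmless) organisational difference is that in the injective case you argue uniformly via effectivity of $\det\mc{F}\otimes\mc{G}^{-1}$ on $A$, which subsumes the paper's separate treatment of a cokernel supported in codimension two via reflexivity and homological dimension.
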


\begin{proof} We exactly imitate the proof of Theorem \ref{thmregsurfcasesr2}. Assume that $\mc{F}$ is locally free and let $f:=c_1(\mc{F})$. Let $\mc{E}$ be a reflexive semistable rank two subsheaf of $\kmmc{F}$ and write $c_1(\mc{E})=\alpha(e)+\sum_la_lN_l.$ The destabilising condition thus implies
\[2H.e\geq H.f.\]
We have a homomorphism $\beta\colon\tau^\star\mc{E}\rightarrow b^\star\mc{F}$. Again, the only difficult case is when $\ker\beta=0$: If the first Chern class of the $\mc{Q}:=\coker\beta$ is trivial, we see that the homological dimension of $\mc{Q}$ is $2$. Since $b^\star\mc{F}$ is locally free, this would contradict the fact that $\tau^\star\mc{E}$ is reflexive. Thus $\mc{Q}=0$ and $\beta$ has to be an isomorphism. But since $\tau^\star\mc{E}$ is symmetric and $\mc{F}$ is not, we are done.

If there is an effective divisor with first Chern class $c_1(\mc{Q})$, the line bundle
\[b^\star\det\mc{F}\otimes\tau^\star\det\mc{E}^\vee(-\sum_la_lN_l)\]
must have a section. Hence either $a_l<0$ $\forall l$ and $\det\mc{F}\simeq\mc{O}_A$ (which we excluded) or $a_l\leq0$ $\forall l$ and
\[H.f>2H.e\]
which contradicts the stability condition.\end{proof}

\subsection{Generalised Kummer Varieties of Dimension Four}\label{subsectiongenkum4}
Let $(A,H)$ be a p.p.a.s. satisfying ($\star$) and let $\tb{A}$ be the Hilbert scheme of three points on $A$. In this section we prove some results concerning the stability of the restriction of tautological sheaves to the four dimensional generalised Kummer variety $j\colon K_3(A)\hookrightarrow\tb{A}$.\\

We have an isomorphism
\[\NS(\tb{A})\cong\ZZ H_{\tb{A}}\oplus \ZZ m_3^\star H\oplus \ZZ\delta_3.\]
Restricting to $K_3(A),$ we obtain
\[\NS(K_3(A)\cong \ZZ j^\star H_{\tb{A}}\oplus \ZZ j^\star\delta_3.\]
Note that again $j^\star m_3^\star H=0$. Considering the polarisations $H_N=NH_{\tb{A}}-\delta_3$ and $H_N^m=N(H_{\tb{A}}+m_3^\star H)-\delta_3$ on $\tb{A},$ we define a polarisation \[H^K_N:=j^\star H_N=j^\star H_N^m=Nj^\star H_{\tb{A}}-j^\star\delta_3.\]

\begin{lem}\label{lemabsurfgen4slope}
We have
\begin{eqnarray*}
(H_N^K)^3\cdot j^\star \delta_3=0 + O(N^4).
\end{eqnarray*}
\end{lem}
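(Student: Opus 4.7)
The strategy is to show that the leading $N$-coefficient vanishes. Expanding gives
\[
(H_N^K)^3 \cdot j^\star \delta_3 = N^3 (j^\star H_{\tb{A}})^3 \cdot j^\star \delta_3 + O(N^2),
\]
so it suffices to prove $(j^\star H_{\tb{A}})^3 \cdot j^\star \delta_3 = 0$ on the $4$-fold $K_3(A)$. By the projection formula for $j\colon K_3(A)\hookrightarrow\tb{A}$, this equals $H_{\tb{A}}^3 \cdot \delta_3 \cdot [K_3(A)]$ in $\tb{A}$. Since $K_3(A) = m_3^{-1}(0)$ and all fibres of $m_3$ are rationally equivalent (via translations on $A$), $[K_3(A)] = m_3^\star[\mathrm{pt}]$ in $H^4(\tb{A})$; for the principal polarization $[H^2] = 2[\mathrm{pt}]$, hence $[K_3(A)] = \frac{1}{2}(m_3^\star H)^2$. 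The task thus reduces to showing
\[
H_{\tb{A}}^3 \cdot \delta_3 \cdot (m_3^\star H)^2 = 0 \quad \text{on } \tb{A}.
\]

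I would then pull this top-dimensional product back along $\psi_3\colon\nt{A}\to\tb{A}$, which is flat of degree $3$ outside codimension $4$ and so suffices for verifying a top-intersection vanishing. By Lemma \ref{eqhigherninductg} and Corollary \ref{corhigherninduct} one has $\psi_3^\star H_{\tb{A}} = \sigma_3^\star P$ with $P := p_2^\star H_{\ta{A}}+q_2^\star H$, and $\psi_3^\star\delta_3 = [D_3]+\sigma_3^\star p_2^\star\delta_2$. Furthermore, $m_3\circ\psi_3 = \mu\circ\sigma_3$ where $\mu\colon A\times\ta{A}\to A$, $(x,\xi')\mapsto x+m_2(\xi')$, so $\psi_3^\star m_3^\star H = \sigma_3^\star\mu^\star H$. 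The $[D_3]$-contribution in the resulting expansion vanishes for purely dimensional reasons: $\sigma_3(D_3) = \Xi_2$ has complex dimension $4$, while the class $P^3\cdot(\mu^\star H)^2 \in H^{10}(A\times\ta{A})$ has trivial restriction to $\Xi_2$.

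The remaining contribution integrates to $\int_{A\times\ta{A}} p_2^\star\delta_2 \cdot P^3\cdot(\mu^\star H)^2$. The key simplification is $(\mu^\star H)^2 = \mu^\star H^2 = 2[\mu^{-1}(0)]$, where $\mu^{-1}(0)$ is the graph of $-m_2\colon\ta{A}\to A$ and is therefore naturally isomorphic to $\ta{A}$. Restricting everything to this graph, $p_2$ becomes the identity and $P$ becomes $H_{\ta{A}}+m_2^\star H$, so the integral reduces to $2\int_{\ta{A}} \delta_2\cdot(H_{\ta{A}}+m_2^\star H)^3$. Both $H_{\ta{A}}$ and $m_2^\star H = \rho_2^\star\tilde s_2^\star H$ are pullbacks via the Hilbert$-$Chow morphism $\rho_2\colon\ta{A}\to S^2A$, so the cube is a $\rho_2$-pullback; combined with $\rho_{2\star}\delta_2 = 0$ (the exceptional divisor of $\rho_2$ maps onto the codimension-$2$ diagonal of $S^2A$), the projection formula forces this integral to vanish.

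The main obstacle is the bookkeeping in the pullback step, especially identifying $\psi_3^\star m_3^\star H$ via the auxiliary map $\mu$ and verifying that $P|_{\mu^{-1}(0)} = H_{\ta{A}}+m_2^\star H$. Once this is done, the conceptual point is that the $n=3$ calculation restricts, on the graph of $-m_2$, to a vanishing $n=2$ intersection of exactly the type handled in the proof of Lemma \ref{lemabsurfslopedelta}.
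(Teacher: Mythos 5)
Your argument is correct, but it takes a much longer route than the paper. The paper's proof is two sentences: since $H_N^K=j^\star H_N$, one has $(H_N^K)^3\cdot j^\star\delta_3=j^\star(H_N^3\,\delta_3)$, and the vanishing of the leading coefficient is then attributed to equation (\ref{eqpolaint2}) of Lemma \ref{lempolaintersect}. The mechanism behind that citation is exactly the one you invoke at the very end of your argument: $\delta_n$ pairs to zero against any top-degree product of classes pulled back along the Hilbert--Chow morphism, because $\rho_{n\star}\delta_n=0$ (the exceptional divisor contracts onto the codimension-two big diagonal). Your added value is that you treat the fundamental class of $K_3(A)$ honestly: strictly speaking (\ref{eqpolaint2}) computes $\delta_3\cdot H_{\tb{A}}^5$, whereas what is needed is $\delta_3\cdot H_{\tb{A}}^3\cdot[K_3(A)]$, and your identification $[K_3(A)]=m_3^\star[\mathrm{pt}]=\tfrac12(m_3^\star H)^2$ closes that gap. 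However, having done so, the whole detour through $A^{[2,3]}$, the map $\mu$, and the graph of $-m_2$ is unnecessary: since $H_{\tb{A}}$ and $m_3^\star H=\rho_3^\star\tilde{s}_3^\star H$ are both $\rho_3$-pullbacks, the projection formula together with $\rho_{3\star}\delta_3=0$ kills $H_{\tb{A}}^3\cdot(m_3^\star H)^2\cdot\delta_3$ directly on $\tb{A}$, which is the same three-line argument you deploy at the $n=2$ level after the reduction. All the intermediate steps you carry out (the identity $m_3\circ\psi_3=\mu\circ\sigma_3$, the vanishing of the $[D_3]$-contribution for dimension reasons, the restriction $P|_{\mu^{-1}(0)}=H_{\ta{A}}+m_2^\star H$ using $\iota^\star H=H$ on $\NS A$) are verifiable and correct, so the proof stands; it is simply a roundabout derivation of a special case of the projection-formula vanishing that the paper quotes wholesale. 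One cosmetic point: the lemma's ``$O(N^4)$'' must be read as asserting that the $N^3$-coefficient vanishes (the product of four divisor classes is at most cubic in $N$), which is how you correctly interpret it and is what Proposition \ref{propabsurfgen4} actually uses.
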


\begin{proof} By definition of $H_N^K$ we have $(H_N^K)^3\cdot j^\star \delta_3=j^\star((H_N)^3\delta_3)$. Now the lemma follows from equation (\ref{eqpolaint2}) in Lemma \ref{lempolaintersect}.\end{proof}

\begin{prop}\label{propabsurfgen4}
Let $\mc{F}$ be a $\mu_H$-stable sheaf on $A$ of rank $r$ and first Chern class $fH.$ If $\mc{F}^{\vee\vee}\not\simeq\mc{O}_A,$ then for $N$ sufficiently large $\kmc{F}:=j^\star\tbmc{F}$ does not contain any $\mu_{H^K_N}$-destabilising subsheaves of rank one.
\end{prop}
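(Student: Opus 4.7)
The strategy is to reduce the assertion to Proposition \ref{propabstab} by lifting a putative destabilising line subbundle on $K_3(A)$ to $\tb{A}$, pulling the associated homomorphism through the nested Hilbert scheme tower used in the proof of Theorem \ref{thmabsurf3}, and finally restricting everything to the fibre $A^3_0:=s_3^{-1}(0)\subset A^3$.

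First I would carry out the standard reductions: replace $\mc{F}$ by its reflexive hull to assume $\mc{F}$ locally free (this preserves rank, first Chern class, stability, and the hypothesis $\mc{F}^{\vee\vee}\not\simeq\mc{O}_A$), and replace a hypothetical rank-one destabilising subsheaf of $\kmc{F}$ by its reflexive hull, so that we may assume we have a line subbundle $\mc{L}'\hookrightarrow\kmc{F}$. Since $K_3(A)$ is simply connected we have $\Pic K_3(A)\cong\NS K_3(A)\cong \ZZ j^\star H_{\tb{A}}\oplus\ZZ j^\star\delta_3$, and every such class lifts from $\NS\tb{A}$. Hence there is a line bundle $\tilde{\mc{L}}\simeq \mc{M}_{\tb{A}}\otimes m_3^\star\mc{O}(bH)\otimes\mc{O}(c\delta_3)$ on $\tb{A}$ with $c_1(\mc{M})=aH$ and $j^\star\tilde{\mc{L}}\simeq\mc{L}'$; the integer $b$ is free, since $j^\star m_3^\star\mc{O}(bH)\simeq\mc{O}_{K_3(A)}$. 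Combined with Lemma \ref{lemabsurfgen4slope} and Corollary \ref{corpolaslope}, the destabilising inequality for $N\gg 0$ becomes the bound $3a\geq f/r$.

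Next I would push the homomorphism $\mc{L}'\to j^\star\tbmc{F}$ through the nested Hilbert scheme tower exactly as in the proof of Theorem \ref{thmabsurf3}, but restricted to the subvarieties lying above $K_3(A)$. Using the identification $\tbmc{F}\simeq\psi_{3\star}\sigma_3^\star q_2^\star\mc{F}$ outside codimension four (Remark \ref{remtautdefres}) and flat base change along the Cartesian square with horizontal arrow $j$ and vertical arrow $\psi_3$ (whose upper left corner is $\widetilde K_3:=\psi_3^{-1}(K_3(A))$), followed by adjunction, one obtains a nonzero homomorphism on $\widetilde K_3$. Computing $\psi_3^\star\tilde{\mc{L}}$ via Corollary \ref{corhigherninduct} and Lemma \ref{eqhigherninductg}, and then descending successively to $A\times\ta{A}$ and to $A^3$ exactly as in the proof of Theorem \ref{thmabsurf3} (the exceptional-divisor contributions being absorbed as in Lemma \ref{lemregsurfhighernd}), produces a nonzero map
\[\bigl(\mc{M}^{\boxtimes 3}\otimes s_3^\star\mc{O}(bH)\bigr)\big|_{A^3_0}\;\longrightarrow\;\pi_1^\star\mc{F}\big|_{A^3_0}.\]

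Finally, identify $A^3_0\cong A\times A$ via $(x,y)\mapsto(x,y,-x-y)$. Under this identification $s_3^\star\mc{O}(bH)$ restricts to the trivial line bundle, while $\mc{M}^{\boxtimes 3}$ restricts to $\mc{M}\boxtimes\mc{M}\otimes (\iota\circ s)^\star\mc{M}$, whose first Chern class is $aH\otimes 1+1\otimes aH+a\,s^\star H$ (using $\iota^\star H=H$ in $\NS A$); and $\pi_1^\star\mc{F}|_{A^3_0}$ is simply $\pi_1^\star\mc{F}$ on $A\times A$. The resulting map therefore falls within the setting of Proposition \ref{propabstab} with parameters $a_1=a_2=b=a$, and its tautological destabilising condition $a_1+a_2+b\geq f/r$ reads $3a\geq f/r$, exactly the bound we have derived. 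Proposition \ref{propabstab} then forces us into the excluded case $r=1$ and $\mc{F}\simeq\mc{M}\simeq\mc{O}_A$, which is ruled out by the assumption $\mc{F}^{\vee\vee}\not\simeq\mc{O}_A$, yielding the desired contradiction. The principal obstacle in the argument is the bookkeeping in the descent: one must verify that the base change through the nested Hilbert scheme behaves well over $K_3(A)$ (the codimension-four bad locus in $\tb{A}$ cuts $K_3(A)$ in codimension at least two, which is enough to work with homomorphisms into a locally free sheaf) and keep careful track of the exceptional-divisor contributions and of the $\Pic^0$ twists introduced by $\iota$ in the final identification $A^3_0\cong A^2$, none of which affect the first Chern class computation that feeds into Proposition \ref{propabstab}.
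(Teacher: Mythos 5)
Your proposal is correct and follows essentially the same route as the paper: lift the destabilising line bundle to $\tb{A}$, descend through the nested Hilbert scheme tower restricted over $K_3(A)$ to a homomorphism on $s_3^{-1}(0)\cong A\times A$, and conclude via Proposition \ref{propabstab}. The only (cosmetic) difference is the endgame: the paper splits into the cases $a\geq 0$ and $a<0$ and pushes forward along the two projections separately, whereas you observe that the restricted line bundle is directly of the form covered by Proposition \ref{propabstab} with $a_1=a_2=b=a$, so that the condition $3a\geq f/r$ is literally its tautological destabilising condition and a single invocation suffices.
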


\begin{proof} We may assume that $\mc{F}$ is locally free. Since all line bundles on $K_3(A)$ come from $\tb{A},$ we may assume that a destabilising line bundle is of the form $j^\star\mc{M}'$ with $\mc{M}'\in\Pic(\tb{A})$. Furthermore, since $j^\star\mu_3^\star H=0$, we may assume that we have no contribution of this summand in $\mc{M}'$ and by a similar reasoning as in \cite[Prop.\ 3.1]{Wan12} we can reduce to the case where we have no contribution of the $\delta_3$-summand neither. Thus there is, in fact, a line bundle $\mc{M}$ on $A$ such that $\mc{M}'\simeq\mc{M}_{\tb{A}}$. Let $l\in\ZZ$ be such that $lH=c_1(\mc{M})$.

We consider the following two diagrams:
\[\xymatrix{
A^{[2,3]} \ar[r]^(.45){\sigma_3} \ar[d]_(.45){\psi_3} & A\times\ta{A}\ar[d] & A\times\widetilde{A\times A} \ar[l]_\psi \ar[d]^(.45)\sigma &&
(\widetilde{K_3(A)},\widetilde{j}) \ar[d]_(.45){\psi_3'} \ar[r]^(.55){\sigma_3'}&(B,k)\ar[d]&(\widetilde{B},\widetilde{k}) \ar[d]^(.45){\sigma'} \ar[l]_{\psi'}\\
\tb{A} \ar[r]^{\rho_3} \ar[dr]_{m_3} & S^3A \ar[d]^(.45){\tilde{s}_3}& A^3 \ar[l] \ar[dl]^{s_3}&&
(K_3(A),j) \ar[r] \ar[dr] & \tilde{s}_3^{-1}(0) \ar[d]& (s_3^{-1}(0),g)\ar[l] \ar[dl]\\
& A&&&&\{0\}&.
}\]
The right diagram is a subdiagram of the left one. Where needed the symbols for the inclusion morphisms are added. For example, $(B,k)$ denotes $B:=\{(x,\xi)\mid x+m(\xi)=0\}$ together with the inclusion $k\colon B\hookrightarrow A\times \ta{A}$ and $\widetilde{K_3(A)}$ is the strict transform of $B$ along $\sigma_3$. In the left diagram we used the abreviations $\psi$ and $\sigma$ to denote $\id_A\times\psi$ and $\id_A\times\sigma,$ respectively.

Note that $\psi_3$ is flat outside codimension four and thus its restriction $\psi_3'$, too, because $\widetilde{K_3(A)}$ is the preimage of $K_3(A)$ under $\psi_3$. Thus for all sheaves $\mc{G}$ on $K_3(A)$ and $\mc{H}$ on $A^{[2,3]}$ we have an isomorphism
\begin{eqnarray*}
\Hom(\mc{G},j^\star\psi_{3\star}\mc{H})&\cong&\Hom(\mc{G},\psi_{3\star}'\widetilde{j}^\star\mc{H}).
\end{eqnarray*}
Furthermore, we have 
\[\psi_3'^\star j^\star\simeq\widetilde{j}^\star\psi_3^\star, \quad \widetilde{j}^\star\sigma_3^\star\simeq\sigma_3'^\star k^\star,\quad \psi'^\star k^\star\simeq \widetilde{k}^\star\psi^\star \quad\text{and}\quad \widetilde{k}^\star\sigma^\star\simeq \sigma'^\star g^\star.\]
Finally, we have
\begin{eqnarray*}
\psi_3^\star\mc{M}_{\tb{A}}&\simeq&\sigma_3^\star(\mc{M}\boxtimes \mc{M}_{\ta{A}})\quad\text{ and}\\[4pt]
\psi^\star(\mc{M}\boxtimes \mc{M}_{\ta{A}})&\simeq& \sigma^\star \mc{M}^{\boxtimes 3}. 
\end{eqnarray*}
We denote the projections $A^3\rightarrow A$ by $\pi_i$. We have
\begin{eqnarray*}
\begin{array}{clcl}
&\Hom(j^\star\mc{M},j^\star\tbmc{F})&\cong& \Hom(j^\star\mc{M},j^\star\psi_{3\star}\sigma_3^\star \pi_1^\star\mc{F})\\[4pt]
\cong& \Hom(j^\star\mc{M},\psi'_{3\star}\widetilde{j}^\star\sigma_3^\star \pi_1^\star\mc{F})& \cong&
\Hom(\psi_3'^\star j^\star\mc{M},\widetilde{j}^\star\sigma_3^\star \pi_1^\star\mc{F})\\[4pt]
\cong& \Hom(\widetilde{j}^\star\psi_3^\star\mc{M},\widetilde{j}^\star\sigma_3^\star \pi_1^\star\mc{F})&\cong&
\Hom(\widetilde{j}^\star\sigma_3^\star(\mc{M}\boxtimes \mc{M}_{\ta{A}}),\widetilde{j}^\star\sigma_3^\star \pi_1^\star\mc{F})\\[4pt]
\cong& \Hom(\sigma_3'^\star k^\star(\mc{M}\boxtimes \mc{M}_{\ta{A}}),\sigma_3'^\star k^\star \pi_1^\star\mc{F})&\cong&
\Hom(k^\star(\mc{M}\boxtimes \mc{M}_{\tb{A}}),k^\star \pi_1^\star\mc{F})\\[4pt]
\subseteq& \Hom(\psi'^\star k^\star(\mc{M}\boxtimes \mc{M}_{\tb{A}}),\psi'^\star k^\star \pi_1^\star\mc{F})&\cong&
\Hom(\widetilde{k}^\star\psi^\star(\mc{M}\boxtimes \mc{M}_{\tb{A}}),\widetilde{k}^\star\psi^\star \pi_1^\star\mc{F})\\[4pt]
\cong& \Hom(\widetilde{k}^\star\sigma^\star\mc{M}^{\boxtimes 3},\widetilde{k}^\star\sigma^\star \pi_1^\star\mc{F})&\cong&
\Hom(\sigma'^\star g^\star\mc{M}^{\boxtimes 3},\sigma'^\star g^\star \pi_1^\star\mc{F})\\[4pt]
\cong& \Hom(g^\star\mc{M}^{\boxtimes 3},g^\star \pi_1^\star\mc{F})&\cong&
\cH^0(g^\star(\mc{M}^{\vee\boxtimes 3}\otimes \pi_1^\star\mc{F})).
\end{array}
\end{eqnarray*}
In order to proceed, we choose an isomorphism $s_3^{-1}(0)\cong A^2$ by sending $(x,y,z)$ to $(x,z)$ and denote the projections $A^2\rightarrow A$ by $\hat{\pi}_i$. In this picture we have the identifications: $\pi_1\circ g = \hat{\pi}_1,$ $\pi_2\circ g \corres \iota\circ s$ and $\pi_3\circ g \corres \hat{\pi}_2$. (Recall that $s\colon A^2\rightarrow A$ denotes the summation map.) Thus pushing forward along $p_1$ ($p_2$ in the second line), we have
\begin{eqnarray}
\cH^0(g^\star(\mc{M}^{\vee\boxtimes 3}\otimes \pi_1^\star\mc{F}))&\cong& \cH^0(\mc{F}\otimes\mc{M}^\vee\otimes \hat{\pi}_{1\star}(\hat{\pi}_2^\star\mc{M}^\vee\otimes s^\star\iota^\star\mc{M}^\vee)) \label{eqgenkum41}\\[4pt]
&\cong& \cH^0(\mc{M}^\vee\otimes \hat{\pi}_{2\star}\label{eqgenkum42}(\hat{\pi}_1^\star(\mc{F}\otimes\mc{M}^\vee)\otimes s^\star\iota^\star\mc{M}^\vee)).
\end{eqnarray}
By Lemma \ref{lemabsurfgen4slope} the destabilising condition of $j^\star\mc{M}$ in $j^\star\tbmc{F}$ implies
\[l\geq \frac{f}{3r}.\]

If $l\geq 0$, we see that the right hand side of (\ref{eqgenkum41}) vanishes but in the case $\mc{M}\simeq \mc{O}_A\simeq\mc{F}.$

If $l<0$, the destabilising condition implies
\[2l>\frac{f}{r}.\]
In this case the right hand side of (\ref{eqgenkum42}) has to vanish by Proposition \ref{propabstab}.\end{proof}

As usual, from Proposition \ref{propabsurfgen4} we can deduce the stability of rank three restricted tautological sheaves associated with rank one sheaves:

\begin{thm}\label{thmabsurfgen4}
Let $\mc{F}$ be a torsion-free rank one sheaf on $A.$ Assume $\det\mc{F}\not\simeq\mc{O}_A.$ Then for all sufficiently large $N$ the sheaf $j^\star\tbmc{F}$ is $\mu_N^K$-stable.
\end{thm}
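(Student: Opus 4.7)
The plan is to follow exactly the pattern of Theorem \ref{thmregsurf3}: first reduce to the case that $\mc{F}$ is a line bundle, then use Proposition \ref{propabsurfgen4} to rule out destabilising subsheaves of rank one, and finally pass to duals to rule out those of rank two. Since the restricted tautological sheaf $\kmc{F}=j^\star\tbmc{F}$ has rank $3$, these are the only cases to treat.

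First I would reduce to the locally free case. If $\mc{F}$ is a torsion-free rank one sheaf, its reflexive hull $\mc{F}^{\vee\vee}$ is a line bundle with $\det\mc{F}^{\vee\vee}\simeq\det\mc{F}\not\simeq\mc{O}_A$, and the natural injection $\mc{F}\hookrightarrow\mc{F}^{\vee\vee}$ induces an injection $\tbmc{F}\hookrightarrow\tb{(\mc{F}^{\vee\vee})}$ preserving rank and first Chern class. Restricting along the closed embedding $j$ preserves this injection on the open locus where it matters for slopes (the cokernel is supported in codimension $\geq 2$ on $\tb{A}$, hence in codimension $\geq 2$ on $K_3(A)$). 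Thus any $\mu_{H_N^K}$-destabilising subsheaf of $\kmc{F}$ yields a $\mu_{H_N^K}$-destabilising subsheaf of $j^\star\tb{(\mc{F}^{\vee\vee})}$ of the same rank and first Chern class. So we may assume $\mc{F}$ is a line bundle.

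Next, I would exclude destabilising subsheaves of rank one by a direct appeal to Proposition \ref{propabsurfgen4} applied to the $\mu_H$-stable line bundle $\mc{F}$ (line bundles are automatically $\mu_H$-stable, and by hypothesis $\mc{F}=\mc{F}^{\vee\vee}\not\simeq\mc{O}_A$). Finally, for the rank two case, I would dualise. A saturated rank two destabilising subsheaf $\mc{G}\subset\kmc{F}$ on the smooth fourfold $K_3(A)$ is reflexive, and then $(\kmc{F}/\mc{G})^{\vee\vee}$ is a rank one subsheaf of $\kmc{F}^\vee$ which destabilises $\kmc{F}^\vee$. By Lemma \ref{lemtautdual} and the compatibility of pullback with duals on the locally free sheaf $\tbmc{F}$,
\[\kmc{F}^\vee \simeq j^\star\bigl((\tbmc{F})^\vee\bigr) \simeq j^\star\tb{(\mc{F}^\vee)}\otimes j^\star\mc{O}(\delta_3) \simeq \kmc{(\mc{F}^\vee)}\otimes j^\star\mc{O}(\delta_3).\]
Twisting by the line bundle $j^\star\mc{O}(-\delta_3)$ preserves stability, so we obtain a rank one $\mu_{H_N^K}$-destabilising subsheaf of $\kmc{(\mc{F}^\vee)}$. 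Since $\det\mc{F}^\vee\simeq(\det\mc{F})^\vee\not\simeq\mc{O}_A$, this contradicts Proposition \ref{propabsurfgen4} applied to the line bundle $\mc{F}^\vee$.

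The argument is essentially forced by the preceding machinery; the main thing to be careful about is that passing from $\tb{A}$ to $K_3(A)$ and taking duals commute in a clean enough way to invoke Lemma \ref{lemtautdual}, which is fine because $\tbmc{F}$ is locally free and $j$ is a regular closed embedding. There is no new geometric input needed beyond what was already used to prove Theorem \ref{thmregsurf3}.
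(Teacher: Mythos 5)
Your proposal is correct and follows exactly the route the paper intends when it says the theorem follows ``as usual'' from Proposition \ref{propabsurfgen4}: reduce to the locally free case, invoke the proposition for rank one subsheaves, and dualise via Lemma \ref{lemtautdual} (compatible with $j^\star$ since $\tbmc{F}$ is locally free) to handle rank two. The only slip is notational: the rank one destabilising subsheaf of $\kmc{F}^\vee$ arising from a saturated rank two subsheaf $\mc{G}$ is $(\kmc{F}/\mc{G})^{\vee}$, not $(\kmc{F}/\mc{G})^{\vee\vee}$.
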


\section{Deformations and Moduli Spaces of Tautological Sheaves}\label{sectiondef}
This chapter collects a few results on different aspects of the behaviour of tautological sheaves under deformations.

\subsection{Deformations of Tautological Sheaves} \label{subsectiondeftaut}
In this section we will make the following general assumption:
\[\left(\begin{array}{c}
X\text{ is a }K3\text{ surface and }\mc{F}\text{ a stable sheaf on }X\text{ with Mukai vector }v\text{ such that}\\
\text{ for every sheaf }\mc{G}\in\mc{M}^s(v)\text{ the associated tautological sheaf }\tnmc{G}\text{ is also stable.}
\end{array}\right)\] 

Note that in the cases where the stability of tautological sheaves has been explicitly proven the tautological sheaf associated with a sheaf $\mc{F}$ is stable if and only if it is true for every other $\mc{G}$ in the same moduli space. (We are only considering sheaves on $K3$ surfaces.)

Denote by $\tn{v}\in \cH^\ast(\tn{X},\QQ)$ the Mukai vector of $\tnmc{F}$. The assignment
\[\mc{F}\mapsto\tnmc{F}\]
yields a morphism
\[\tn{[-]}\colon\mc{M}^s(v)\rightarrow\mc{M}^s(\tn{v}).\]
We shall mainly discuss the case $n=2$. Let us prove the following lemma which shows that $\ta{[-]}$ is injective on closed points.

\begin{lem}
For every sheaf $\mc{F}$ on $X$ we have
\[\mc{F}\simeq \mc{T}\!or^1_{\mc{O}_{X\times X}}(\mc{O}_\Delta,\sigma_\star\psi^\star\tamc{F}).\]
Thus we can reconstruct the original sheaf $\mc{F}$ from the tautological sheaf $\tamc{F}.$
\end{lem}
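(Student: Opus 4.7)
The strategy is to use the short exact sequence of Ellingsrud--Göttsche--Lehn that appears in the proof of Lemma \ref{eqhigherninductd} to describe $\psi^\star \tamc{F}$ on $\widetilde{X \times X}$, push it down to $X \times X$ along $\sigma$, and then read off the first $\mc{T}\!or$ with the diagonal. Concretely, specialising the derivation of that lemma to arbitrary locally free coefficients, I would tensor the EGL sequence on $\nt{X} \times X$ with the pullback of $\mc{F}$ from $X$ and then apply $\pi_\star$; for $n=2$ this yields on $\widetilde{X \times X}$ a short exact sequence of the form
\[
 0 \rightarrow \mc{O}(-D) \otimes \sigma^\star \pi_2^\star \mc{F} \rightarrow \psi^\star \tamc{F} \rightarrow \sigma^\star \pi_1^\star \mc{F} \rightarrow 0,
\]
where $\pi_1,\pi_2 \colon X \times X \to X$ denote the two projections. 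I would establish this first for $\mc{F}$ locally free (so that tensoring with the pullback of $\mc{F}$ stays exact) and then reduce the general coherent case through a locally free resolution of $\mc{F}$.

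Next I would push this sequence forward by $\sigma$. The identities $\sigma_\star \mc{O}(-D) = \mc{I}_\Delta$ and $R^{\geq 1} \sigma_\star \mc{O}(-D) = 0$ hold because $\Delta$ has codimension two in $X \times X$, so together with the projection formula they produce on $X \times X$ the short exact sequence
\[
 0 \rightarrow \pi_2^\star \mc{F} \otimes \mc{I}_\Delta \rightarrow \sigma_\star \psi^\star \tamc{F} \rightarrow \pi_1^\star \mc{F} \rightarrow 0.
\]

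Finally I would apply $\mc{T}\!or^\bullet(\mc{O}_\Delta, -)$. Since the composition of the diagonal embedding with either projection is $\id_X$, derived restriction of $\pi_i^\star \mc{F}$ to $\Delta$ recovers $\mc{F}$ concentrated in degree zero, so $\mc{T}\!or^{\geq 1}(\mc{O}_\Delta, \pi_i^\star \mc{F}) = 0$. Hence the long exact sequence of Tor collapses to
\[
 \mc{T}\!or^1(\mc{O}_\Delta, \sigma_\star \psi^\star \tamc{F}) \cong \mc{T}\!or^1(\mc{O}_\Delta, \pi_2^\star \mc{F} \otimes \mc{I}_\Delta) \cong \mc{F} \otimes \mc{T}\!or^1(\mc{O}_\Delta, \mc{I}_\Delta),
\]
where the last step uses that $\pi_2^\star \mc{F}$ is locally free and restricts to $\mc{F}$ under $\Delta \cong X$. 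The ideal exact sequence of $\Delta$ together with the Koszul resolution of the diagonal gives $\mc{T}\!or^1(\mc{O}_\Delta, \mc{I}_\Delta) \cong \mc{T}\!or^2(\mc{O}_\Delta, \mc{O}_\Delta) \cong \wedge^2 \Omega_X \cong \omega_X$, and the $K3$ hypothesis makes $\omega_X \cong \mc{O}_X$, so the right-hand side simplifies to $\mc{F}$, as claimed.

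The main obstacle I expect is the first step: setting up the short exact sequence with the correct twist by $\mc{O}(-D)$ and making the reduction from arbitrary coherent $\mc{F}$ to the locally free case, where one may have to use a small spectral sequence argument arising from tensoring with a locally free resolution. The remaining Tor bookkeeping is routine, but the key conceptual input is the $K3$ hypothesis: it is precisely what trivialises the copy of $\omega_X$ produced via the Koszul resolution of the diagonal and allows the recovery of $\mc{F}$ on the nose rather than up to a twist by the canonical bundle.
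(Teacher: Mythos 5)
Your argument is correct, and it reaches the paper's conclusion by a genuinely different route. The paper simply quotes the fundamental exact sequence $0\to\sigma_\star\psi^\star\tamc{F}\to\mc{F}^{\boxplus2}\to\Delta_\star\mc{F}\to0$ on $X\times X$, tensors it with $\mc{O}_\Delta$, and identifies $\mc{T}\!or^1(\mc{O}_\Delta,\sigma_\star\psi^\star\tamc{F})$ with $\mc{T}\!or^2(\mc{O}_\Delta,\Delta_\star\mc{F})\simeq\mc{F}$ via the standard computation of the derived self-intersection of the diagonal (\cite[Prop.\ 11.8]{Huy06}). Your sequence $0\to\pi_2^\star\mc{F}\otimes\mc{I}_\Delta\to\sigma_\star\psi^\star\tamc{F}\to\pi_1^\star\mc{F}\to0$ is exactly what one gets from the paper's sequence by composing with the projection $\mc{F}^{\boxplus2}\to\pi_1^\star\mc{F}$, so the two presentations are equivalent; but you rebuild it from scratch out of the EGL sequence on the nested Hilbert scheme and the vanishing $R^{\geq1}\sigma_\star\mc{O}(-D)=0$, and you then replace the appeal to \cite{Huy06} by the Koszul computation $\mc{T}\!or^1(\mc{O}_\Delta,\mc{I}_\Delta)\simeq\wedge^2\Omega_X\simeq\omega_X$. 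This is more self-contained and makes the role of $\omega_X$ (hence of the $K3$ hypothesis) completely transparent; incidentally, your twist $\mc{O}(-D)$ is the correct sign, consistent with $\det\tamc{O}_X\simeq\mc{O}(-\delta)$ from Lemma \ref{lemtautc1}. The one place where the paper's route is smoother is the generality in $\mc{F}$: both the fundamental sequence and the diagonal Tor formula apply to an arbitrary coherent sheaf directly, whereas your derivation uses local freeness twice (exactness of tensoring the EGL sequence, and the projection formula for $\sigma_\star$), and the proposed reduction of the general case via locally free resolutions is left vague --- it would require checking that $\mc{F}\mapsto\sigma_\star\psi^\star\tamc{F}$ interacts correctly with such a resolution, e.g.\ by observing that $\mc{F}\mapsto\tamc{F}$ is exact because $\Xi_2$ is finite flat over $\ta{X}$ and flat over $X$. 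That point should be filled in if you want the statement for every sheaf $\mc{F}$ rather than only for vector bundles.
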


\begin{proof} Recall that we have an exact sequence on $X\times X$ (cf.\ Propostion \ref{eqtautfund}):
\begin{equation}\label{eqdeftautbasic}
0 \rightarrow \sigma_\star\psi^\star\tamc{F} \rightarrow \mc{F}^{\boxplus2}\rightarrow \Delta_\star\mc{F}\rightarrow0. 
\end{equation}
We tensor this sequence with the structure sheaf of the diagonal $\Delta\subset X\times X$. Of course we have
\[\pi_1^\star\mc{F}|_\Delta\simeq\Delta^\star\pi_1^\star\mc{F}\simeq\mc{F}\]
and the higher Tors $\mc{T}\!or^i_{\mc{O}_{X\times X}}(\mc{O}_\Delta,\pi_1^\star\mc{F})$ vanish. Therefore we have an isomorphism \[\mc{T}\!or^1_{\mc{O}_{X\times X}}(\mc{O}_\Delta,\sigma_\star\psi^\star\tamc{F})\simeq \mc{T}\!or^2_{\mc{O}_{X\times X}}(\mc{O}_\Delta,\Delta_\star\mc{F}).\]
By Proposition 11.8 in \cite{Huy06} we find
\[\hspace{80pt} \mc{T}\!or^i_{\mc{O}_{X\times X}}(\mc{O}_\Delta,\Delta_\star\mc{F})=\begin{cases} \mc{F} &i=0,2\text{ and}\\
\mc{F}\otimes\Omega_X &i=1.\hspace{130pt}\qedhere\end{cases}\]
\end{proof}

\begin{rem}
If we tensor (\ref{eqdeftautbasic}) with $\mc{O}_\Delta$ as above, the first terms of the resulting long exact Tor-sequence yield a short exact sequence
\[0\rightarrow \mc{F}\otimes\Omega_X\rightarrow \sigma_\star\psi^\star\tamc{F}|_\Delta\rightarrow \mc{F}\rightarrow 0.\]
It is not clear if this exact sequence is split or if it is equivalent to the natural extension corresponding to the Atiyah class of $\mc{F}.$
\end{rem}

Let us consider a stable sheaf $\mc{F}$ on a $K3$ surface $X$. The stability implies that either $h^0(X,\mc{F})$ or $h^2(X,\mc{F})=h^0(X,\mc{F}^\vee)$ vanishes. Let us assume the former is the case. (The case $h^2(X,\mc{F})=0$ can be treated in exactly the same way.) Corollary \ref{cortautext} shows that we have a natural monomorphism
\[\ta{[-]}\colon\Ext^1(\mc{F},\mc{F})\hookrightarrow\Ext^1(\tamc{F},\tamc{F}),\]
which maps an infinitesimal deformation of $\mc{F}$ to its induced deformation of $\tamc{F}.$

\begin{defi}
We call an infinitesimal deformation of $\tamc{F},$ the class of which lies in the image of $\ta{[-]}$ above, a \em surface deformation. \em Deformations lying in the other summand of equation (\ref{eqtautext}) in Corollary \ref{cortautext} are referred to as \em additional deformations. \em
\end{defi}

We conclude:
\begin{prop}\label{propdeftaut}
We have an embedding of moduli spaces
\[\mc{M}^s(v)\hookrightarrow\mc{M}^s(\ta{v}).\]
\end{prop}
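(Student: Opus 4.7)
The plan is to use the two tools just established: the Tor-reconstruction lemma, which gives injectivity on closed points, and the monomorphism \(\ta{[-]}\colon\Ext^1(\mc{F},\mc{F})\hookrightarrow\Ext^1(\tamc{F},\tamc{F})\) from Corollary \ref{cortautext}, which gives injectivity on tangent spaces. Together these show that \(\ta{[-]}\) is an unramified, injective morphism between smooth (and, under the hypothesis, nonempty) varieties, hence a locally closed immersion.

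First I would verify that \(\ta{[-]}\) is indeed a morphism of schemes (not merely a set-theoretic map): given a flat family \(\mc{G}\) of stable sheaves on \(X\) parametrised by a scheme \(T\), the pullback \(q_T^\star\mc{G}\) to \(X\times\tn{X}\times T\) tensored with \(\mc{O}_{\Xi_n\times T}\) and pushed forward along \(p_T\) is flat over \(T\) (since \(p_n\) is flat and finite), and fibrewise gives the tautological sheaf; the universal property of \(\mc{M}^s(\ta{v})\) then produces a classifying morphism. Injectivity on closed points is immediate from the preceding lemma: if \(\tamc{F}\simeq\tamc{G}\), then applying \(\sigma_\star\psi^\star\) and taking \(\mc{T}\!or^1_{\mc{O}_{X\times X}}(\mc{O}_\Delta,-)\) recovers both \(\mc{F}\) and \(\mc{G}\), so \(\mc{F}\simeq\mc{G}\).

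For injectivity of the differential, recall that the Zariski tangent space to \(\mc{M}^s(v)\) at \([\mc{F}]\) is \(\Ext^1_X(\mc{F},\mc{F})\) and to \(\mc{M}^s(\ta{v})\) at \([\tamc{F}]\) is \(\Ext^1_{\ta{X}}(\tamc{F},\tamc{F})\). The differential of \(\ta{[-]}\) at \([\mc{F}]\) sends a first-order deformation to the deformation of the tautological sheaf obtained by applying the (flat, exact) functor \(p_{2\star}(q_1^\star(-)\otimes\mc{O}_{\Xi_2})\) to the corresponding extension over \(X[\varepsilon]\). After twisting \(\mc{F}\) by a suitable line bundle to arrange \(h^2(X,\mc{F})=0\) (which does not affect the moduli space up to isomorphism and is compatible with the construction, cf.\ the remark following Corollary \ref{cortautext}), Corollary \ref{cortautext} identifies this differential with the first direct summand of \(\Ext^1(\tamc{F},\tamc{F})\cong\Ext^1_X(\mc{F},\mc{F})\oplus\cH^0(X,\mc{F})\otimes \cH^1(X,\mc{F})^\vee\), and hence is injective.

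Combining point-injectivity with injectivity of all tangent maps shows that \(\ta{[-]}\) is an injective unramified morphism; since both source and target are separated finite-type schemes (the moduli spaces of stable sheaves on \(K3\) surfaces are quasi-projective), this is a locally closed immersion, which is the meaning of ``embedding'' here. The main subtlety I anticipate is the twisting argument: one must argue that for every \(\mc{G}\in\mc{M}^s(v)\), one of \(h^0(X,\mc{G})\) or \(h^2(X,\mc{G})\) vanishes uniformly so that Corollary \ref{cortautext} applies at every point of the source, and the analogous description of \(\Ext^1\) goes through with \(\mc{F}^\vee\) in the \(h^0=0\) case. This is automatic from stability once \(v\neq(1,0,\pm 1)\), which is the standing assumption hidden in \(\mc{F}\not\simeq \mc{O}_X\).
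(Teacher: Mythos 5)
Your proposal is correct and follows essentially the same route as the paper: injectivity on closed points via the $\mc{T}\!or^1$-reconstruction lemma, injectivity on tangent spaces via the monomorphism $\Ext^1(\mc{F},\mc{F})\hookrightarrow\Ext^1(\tamc{F},\tamc{F})$ coming from Corollary \ref{cortautext}, and the conclusion that an injective unramified morphism of these moduli schemes is an embedding. The extra care you take (checking that $\ta{[-]}$ is a morphism via flat families, and the twisting to arrange $h^2=0$ uniformly) only makes explicit what the paper leaves implicit.
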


The additional deformations are isomorphic to $\cH^0(X,\mc{F})\otimes \cH^1(X,\mc{F})^\vee.$

\begin{cor}
Let $\mc{F}$ be such that $h^1(X,\mc{F})=0.$ Then we have a local isomorphism of the corresponding moduli spaces.
\end{cor}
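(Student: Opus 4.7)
The plan is to upgrade the embedding $\ta{[-]}\colon\mc{M}^s(v)\hookrightarrow\mc{M}^s(\ta{v})$ from Proposition \ref{propdeftaut} to a local isomorphism at $([\mc{F}],[\tamc{F}])$ by a tangent-space comparison. First I would read off the differential of $\ta{[-]}$ at $[\mc{F}]$ from Corollary \ref{cortautext}: it is the inclusion of the first summand
\[
\Ext^1_X(\mc{F},\mc{F})\;\hookrightarrow\;\Ext^1_X(\mc{F},\mc{F})\,\oplus\,\cH^0(X,\mc{F})\otimes \cH^1(X,\mc{F})^\vee.
\]
Under the hypothesis $h^1(X,\mc{F})=0$ the second summand collapses, so the differential becomes an isomorphism.

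Next I would invoke Mukai's theorem that the moduli space $\mc{M}^s(v)$ of stable sheaves on a $K3$ surface is smooth at $[\mc{F}]$ of dimension $\dim_\CC\Ext^1_X(\mc{F},\mc{F})$. Combined with the previous step, $\ta{[-]}$ is a closed embedding from a smooth scheme whose differential at $[\mc{F}]$ is bijective onto $T_{[\tamc{F}]}\mc{M}^s(\ta{v})$. Passing to completions of local rings, this yields a surjection
\[
\widehat{\mc{O}}_{\mc{M}^s(\ta{v}),[\tamc{F}]}\;\twoheadrightarrow\;\widehat{\mc{O}}_{\mc{M}^s(v),[\mc{F}]}
\]
whose kernel is contained in the square of the maximal ideal. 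Since the target is regular and the induced map on cotangent spaces is bijective, the source must itself be regular of the same dimension, and the surjection is then forced to be an isomorphism; this identifies formal (equivalently, analytic) neighbourhoods of $[\mc{F}]$ and $[\tamc{F}]$.

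The only real subtlety is that one usually needs a matching of obstruction spaces as well to conclude a local isomorphism between moduli spaces; here this worry is bypassed precisely because smoothness of $\mc{M}^s(v)$ makes the obstructions on the source side trivial, and a closed embedding with bijective tangent map from a smooth scheme is automatically a local isomorphism. Note that this simultaneously shows that $\mc{M}^s(\ta{v})$ is itself smooth at $[\tamc{F}]$, which is consistent with the slogan that the potential singularities of the moduli spaces of tautological sheaves (to be explored in Section \ref{subsectiondeftaut} and the next subsections) must come from \emph{additional} deformations, i.e.\ from a non-vanishing of $\cH^0(X,\mc{F})\otimes \cH^1(X,\mc{F})^\vee$.
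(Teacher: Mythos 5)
Your argument is correct and is essentially the paper's (the corollary is stated there without proof, as an immediate consequence of the vanishing of the additional deformation summand $\cH^0(X,\mc{F})\otimes\cH^1(X,\mc{F})^\vee$ when $h^1(X,\mc{F})=0$). The tangent-space comparison together with Mukai's smoothness of $\mc{M}^s(v)$ is exactly the intended justification, and your completion-of-local-rings step just makes explicit why a bijective differential from a smooth source forces a local isomorphism.
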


\begin{cor}
Let $\mc{F}$ be such that $\mc{M}^s(v)$ is compact and $h^1(X,\mc{G})=0$ for all $\mc{G}\in\mc{M}^s(v).$ Then we have an isomorphism of $\mc{M}^s(v)$ with a connected component of $\mc{M}^s(\ta{v}).$
\end{cor}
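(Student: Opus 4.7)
The plan is to promote the local isomorphism of the preceding corollary to a global open-and-closed embedding by invoking compactness. The three ingredients are already in place: injectivity on closed points, bijectivity of tangent maps, and properness.

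First, Proposition \ref{propdeftaut} provides an injective morphism
\[
\ta{[-]}\colon\mc{M}^s(v)\hookrightarrow\mc{M}^s(\ta{v}),
\]
and the vanishing $h^1(X,\mc{G})=0$ for every $\mc{G}\in\mc{M}^s(v)$ kills the additional-deformation summand $\cH^0(X,\mc{G})\otimes\cH^1(X,\mc{G})^\vee$ appearing in Corollary \ref{cortautext}. Consequently, at every closed point $\mc{G}\in\mc{M}^s(v)$ the induced map on Zariski tangent spaces
\[
\Ext^1_X(\mc{G},\mc{G})\longrightarrow\Ext^1_{\ta{X}}(\ta{\mc{G}},\ta{\mc{G}})
\]
is an isomorphism, precisely the content of the previous corollary read pointwise.

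Second, I would argue that $\ta{[-]}$ is an open immersion. Both moduli spaces are smooth at stable points of the relevant Mukai vectors (they are moduli spaces of simple sheaves on a $K3$ surface and on the irreducible symplectic manifold $\ta{X}$, the latter being known to be smooth in our situation by the preceding stability discussion). An injective morphism between smooth varieties that is bijective on every tangent space is an open immersion; this is the step where one must also check that the tangent map really is the differential of $\ta{[-]}$, but this is the functoriality content of the identification in Corollary \ref{cortautext}, used by the author to define $\ta{[-]}$ on infinitesimal deformations in the first place.

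Third, the hypothesis that $\mc{M}^s(v)$ is compact makes $\ta{[-]}$ proper. An open immersion whose domain is proper has image that is simultaneously open and closed in the target, hence a union of connected components of $\mc{M}^s(\ta{v})$. Since $\mc{M}^s(v)$ is connected (the customary convention when speaking of "a" moduli space on a $K3$; otherwise the argument applies componentwise), the image is a single connected component, and $\ta{[-]}$ is an isomorphism onto it.

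The main obstacle is step two: passing rigorously from a bijection of tangent spaces at every closed point to an open immersion of moduli schemes. The smoothness of both moduli spaces reduces this to an application of the inverse function theorem (or its algebraic analogue), but one has to verify that the map on tangent spaces produced in Corollary \ref{cortautext} really coincides with the derivative of $\ta{[-]}\colon\mc{M}^s(v)\to\mc{M}^s(\ta{v})$; this amounts to checking that the construction $\mc{F}\mapsto\ta{\mc{F}}$ is compatible with flat families, which follows from the flatness of $p_n\colon\Xi_n\to\ta{X}$ noted in Section \ref{subsectiongeom}.
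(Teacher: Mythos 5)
Your argument is essentially the intended one: injectivity on closed points (Proposition \ref{propdeftaut}), the tangent-space isomorphism forced by $h^1(X,\mc{G})=0$ via Corollary \ref{cortautext} (i.e.\ the preceding corollary read at every point), and compactness to make the image open and closed, hence a connected component. One caveat on your second step: you assert that $\mc{M}^s(\ta{v})$ is smooth because it is a moduli space of sheaves on the irreducible symplectic manifold $\ta{X}$ --- but this is exactly what fails in general, as Section \ref{subsectionmodspacsing} of the paper shows by exhibiting a singular point of such a moduli space. Smoothness of $\mc{M}^s(\ta{v})$ at the points $[\ta{\mc{G}}]$ in the image is nonetheless true, but for a different reason: the local dimension of $\mc{M}^s(\ta{v})$ at $[\ta{\mc{G}}]$ is at least $\dim\mc{M}^s(v)=\dim\Ext^1_X(\mc{G},\mc{G})$ because it contains the image of the embedding, while the Zariski tangent space has exactly this dimension once the additional summand $\cH^0(X,\mc{G})\otimes\cH^1(X,\mc{G})^\vee$ vanishes; equality of these bounds gives regularity of the local ring. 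With that repair, the inverse-function-theorem step and the open-and-closed argument go through as you describe.
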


\subsection{The Additional Deformations and Singular Moduli Spaces} \label{subsectionmodspacsing}
In the last section we have seen that the surface deformations of tautological sheaves are unobstructed. This is not true for all deformations. Indeed, in this section we will give an explicit construction of an example of a sheaf $\mc{F}$ on an elliptically fibred $K3$ surface such that $\tamc{F}$ is stable and the corresponding point in the moduli space is singular.\\

To prove this statement let us recall the most basic properties of the Kuranishi map: The general idea of the deformation theory of a stable sheaf $\mc{F}$ is that infinitesimal deformations are parametrised by $\Ext^1(\mc{F},\mc{F})$ and the obstructions lie in $\Ext^2(\mc{F},\mc{F})$. This is formalised by the so-called Kuranishi map. More precisely it can be shown that there is a map $\kappa\colon\Ext^1(\mc{F},\mc{F})\rightarrow \Ext^2(\mc{F},\mc{F})$ such that the completion of the local ring of the point of the moduli space corresponding to $\mc{F}$ is isomorphic to the local ring of $\kappa^{-1}(0)$ in $0$. In general there is no direct geometric description of the Kuranishi map but it is known that the constant and linear terms of the power series expansion of $\kappa$ vanish and that its quadratic part is given by $\kappa_2\colon\Ext^1(\mc{F},\mc{F})\rightarrow \Ext^2(\mc{F},\mc{F}), e\mapsto \frac{1}{2}(e\circ e).$

For a $K3$ surface this quadratic term always vanishes since it is exactly the Serre duality pairing which is known to be alternating. But if we consider a tautological sheaf $\tamc{F}$ the quadratic part of the Kuranishi map may be non-trivial. This would correspond to the existence of a quadratic part in the equation of the tangent cone of the point in the moduli space corresponding to $\tamc{F}$. Consequently, the tangent cone would be strictly smaller than the tangent space and we would end up with a singularity.

\begin{exa}
Let $X$ be an elliptically fibred $K3$ surface with fibre class $E$ and section $C$. Consider the line bundle $\mc{G}:=\mc{O}(kF),$ $k\geq 2.$ We have $h^0(\mc{G})=k+1$ and $h^1(\mc{G})=k-1$. Certainly $\mc{G}$ is stable and the moduli space is a reduced point. The rank two tautological sheaf $\tamc{G}$ is also stable and the tangent space of its moduli space at the point corresponding to $\tamc{G}$ is isomorphic to $\cH^0(X,\mc{G})\otimes \cH^1(X,\mc{G})^\vee,$ which has dimension $k^2-1.$ The quadratic term of the Kuranishi map vanishes identically but it is not clear if we can deform $\tamc{G}$ along any of these infinitesimal directions.
\end{exa}

\begin{exa}
We continue with the same elliptic $K3$ as above. From \cite{DM89} we learn that the linear system of the line bundle $\mc{L}$ with first Chern class $C+kE$ has $C$ as a base component for $k\geq2$. We have $h^0(\mc{G})=k+1$ and $h^1(\mc{G})=0.$ Now let $p$ be a point on the curve $C$ and denote by $\mc{I}_p$ the corresponding ideal sheaf. We set $\mc{F}:=\mc{L}\otimes\mc{I}_p$. Certainly $\mc{F}$ is a torsion-free rank one sheaf with nonvanishing first Chern class. Hence $\tamc{F}$ is stable by Theorem \ref{thmregsurfcasesr1}.
\end{exa}

\begin{thm}
The point in the moduli space corresponding to $\tamc{F}$ is singular.
\end{thm}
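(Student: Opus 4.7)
The plan is to verify that the quadratic term of the Kuranishi map at $[\tamc{F}]\in\mc{M}^s(\ta{v})$ does not vanish, so that the tangent cone is a proper subcone of the tangent space $\Ext^1(\tamc{F},\tamc{F})$. First I would record the cohomology of $\mc{F}=\mc{L}\otimes\mc{I}_p$: from the short exact sequence $0\rightarrow\mc{F}\rightarrow\mc{L}\rightarrow\mc{O}_p\rightarrow 0$, together with $h^0(\mc{L})=k+1$, $h^1(\mc{L})=0$, and the fact that $p\in C\subset\mathrm{Bs}|\mc{L}|$ forces the evaluation $\cH^0(\mc{L})\rightarrow\mc{O}_p$ to vanish, one finds $h^0(\mc{F})=k+1$, $h^1(\mc{F})=1$ and $h^2(\mc{F})=0$. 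Combined with Corollary~\ref{cortautext} this yields $\Ext^1(\tamc{F},\tamc{F})\cong\Ext^1(\mc{F},\mc{F})\oplus \cH^0(\mc{F})\otimes \cH^1(\mc{F})^\vee$ of total dimension $2+(k+1)=k+3$, with the two summands being the surface and additional deformations. A direct Mukai vector computation gives $v(\mc{F})=(1,C+kE,k-1)$ and hence $v^2=0$, so $\mc{M}^s(v)$ is smooth of dimension $v^2+2=2$ at $[\mc{F}]$ and Proposition~\ref{propdeftaut} embeds this smooth $2$-dimensional germ into $\mc{M}^s(\ta{v})$ at $[\tamc{F}]$.

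Next I would bring in the Kuranishi calculus. The completion of the local ring of $\mc{M}^s(\ta{v})$ at $[\tamc{F}]$ is cut out on the formal disc over $\Ext^1(\tamc{F},\tamc{F})$ by a formal map $\kappa$ whose quadratic part is the Yoneda square $\kappa_2(e)=\tfrac{1}{2}(e\circ e)\in\Ext^2(\tamc{F},\tamc{F})$. Since $\mc{M}^s(v)$ is smooth, $\kappa_2$ restricted to the surface-deformation summand $\Ext^1(\mc{F},\mc{F})$ vanishes identically, so it suffices to exhibit an $e$ in the additional summand for which $e\circ e\neq 0$.

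To construct such an $e$ I would choose a nonzero section $s\in \cH^0(\mc{F})=\Hom_X(\mc{O}_X,\mc{F})$ (the inclusion of any section of $\mc{L}$, all of which vanish automatically at $p$) and a nonzero class $\phi\in \cH^1(\mc{F})^\vee\cong\Ext^1_X(\mc{F},\mc{O}_X)$, and put $e:=s\otimes\phi$. Expanding $e\circ e$ via Krug's explicit Yoneda product formula from \cite[Sect.\ 7]{Kru11} produces a sum of terms indexed by the ways of contracting the four external factors of $e\otimes e$. The contraction $\phi\circ s$ lands in $\cH^1(X,\mc{O}_X)=0$ and drops out, but the reverse contraction $s\circ\phi$ yields an element of the summand $\Ext^1_X(\mc{F},\mc{F})\otimes \cH^0(\mc{O}_X)\subset\Ext^2(\tamc{F},\tamc{F})$; unwinding the composition through the defining sequence of $\mc{F}$ identifies it, up to a nonzero scalar, with the class in $\Ext^1(\mc{F},\mc{F})$ representing a nontrivial infinitesimal motion of $p$ on $X$, and hence it is nonzero. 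The main obstacle I expect is precisely the last verification --- Krug's formula contains many combinatorially indexed summands and, as the preceding example with $\mc{G}=\mc{O}(kF)$ shows, total cancellation of the na\"ive quadratic contribution does sometimes occur, so one must carefully check that the surviving term in the $\Ext^1_X(\mc{F},\mc{F})\otimes \cH^0(\mc{O}_X)$ summand is not wiped out by contributions coming from the other summands of $\Ext^2(\tamc{F},\tamc{F})$. Once $\kappa_2(e)\neq 0$ is established, the tangent cone at $[\tamc{F}]$ is a proper subcone of $\Ext^1(\tamc{F},\tamc{F})$ and $[\tamc{F}]$ is therefore a singular point of $\mc{M}^s(\ta{v})$.
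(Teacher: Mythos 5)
Your setup is sound and matches the paper up to the reduction: the cohomology of $\mc{F}=\mc{L}\otimes\mc{I}_p$ (in particular $h^0(\mc{F})=k+1$, $h^1(\mc{F})=1$, $h^2(\mc{F})=0$), the $(k+3)$-dimensional tangent space, and the strategy of exhibiting a class on which the quadratic part $\kappa_2(x)=\tfrac{1}{2}(x\circ x)$ of the Kuranishi map is nonzero. The gap is in the choice of $x$. You take a \emph{pure additional} class $e=s\otimes\phi$ with $s\in\cH^0(\mc{F})$ and $\phi\in\cH^1(\mc{F})^\vee\cong\Ext^1_X(\mc{F},\mc{O}_X)$ and try to show $e\circ e\neq0$. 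This fails for two reasons. First, the summand $\Ext^1_X(\mc{F},\mc{F})\otimes\cH^0(X,\mc{O}_X)$ in which you locate the surviving contraction $s\circ\phi$ has total degree $1$; it is a summand of $\Ext^1(\tamc{F},\tamc{F})$, not of $\Ext^2(\tamc{F},\tamc{F})$, so the degree-$2$ obstruction class cannot be identified with $s\circ\phi\in\Ext^1_X(\mc{F},\mc{F})$. Second, by Krug's product formula the square of a pure additional class contributes nothing: the only available contraction in the correct degree is $\phi\circ s\in\cH^1(X,\mc{O}_X)=0$, and any term involving $s\circ\phi$ would have to be weighted by a class in $\cH^1(X,\mc{O}_X)=0$. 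This is exactly why the Yoneda square of $e+a\otimes b$ recorded in the paper contains no term quadratic in $a\otimes b$.

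What actually detects the singularity is the \emph{cross term} between the two summands of $\Ext^1(\tamc{F},\tamc{F})$: for $x=e+a\otimes b$ with $e\in\Ext^1_X(\mc{F},\mc{F})$ a surface deformation and $a\otimes b\in\cH^1(\mc{F})^\vee\otimes\cH^0(\mc{F})$ an additional one, the square is $(a\circ e)\otimes b+a\otimes(e\circ b)$, so it suffices to show that the pairing $\Ext^1_X(\mc{F},\mc{F})\times\cH^0(\mc{F})\rightarrow\cH^1(\mc{F})$, $(e,\varphi)\mapsto e\circ\varphi$, is nonzero. You do invoke the correct geometric mechanism --- every section of $\mc{L}$ vanishes at $p$ because $p$ lies on the base component $C$, so $\cH^0(\mc{F})\cong\cH^0(\mc{L})$, and such a section cannot follow a deformation $e$ of $\mc{I}_p$ that moves $p$ off $C$ --- but you attach it to the wrong bilinear map. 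Redirecting that argument to the cross pairing above closes the proof; as your own example $\mc{G}=\mc{O}(kF)$ illustrates, the quadratic form can vanish identically even when the additional summand is large, so the mixed term is genuinely where the content lies.
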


By the above considerations we have to prove the following lemma:

\begin{lem}
For the example $\tamc{F}=\ta{(\mc{L}\otimes\mc{I}_p)}$ the quadratic part of the Kuranishi map does not vanish.
\end{lem}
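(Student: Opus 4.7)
The plan is to exhibit a class $e\in\Ext^1(\tamc{F},\tamc{F})$ lying in the additional summand $\cH^0(\mc{F})\otimes\cH^1(\mc{F})^\vee$ whose Yoneda square is nonzero in $\Ext^2(\tamc{F},\tamc{F})$; by the discussion preceding the statement, this is exactly what is needed. First I would record the cohomology of $\mc{F}=\mc{L}\otimes\mc{I}_p$: from the exact sequence $0\to\mc{F}\to\mc{L}\to k(p)\to 0$ and the fact that $C$ is a base component of $|\mc{L}|$ passing through $p$, every global section of $\mc{L}$ vanishes at $p$, so the evaluation $\cH^0(\mc{L})\to k(p)$ is zero and the coboundary is an isomorphism; hence $h^0(\mc{F})=k+1$, $h^1(\mc{F})=1$, and $h^2(\mc{F})=0$. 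Applying Theorem \ref{eqtautextg} then yields
\[\Ext^1(\tamc{F},\tamc{F})\cong\Ext^1(\mc{F},\mc{F})\oplus\bigl(\cH^0(\mc{F})\otimes\cH^1(\mc{F})^\vee\bigr),\]
and the analogous expansion shows that $\Ext^2(\tamc{F},\tamc{F})$ contains the summand $\cH^0(\mc{F})^\vee\otimes\cH^0(\mc{F})$ of dimension $(k+1)^2$.

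On the first summand $\Ext^1(\mc{F},\mc{F})$, the Yoneda square is the skew-symmetric Serre duality pairing on K3 and thus vanishes on every single vector; hence $\kappa_2$ is determined entirely by contributions from the additional summand. I would pick $e=s\otimes\alpha$ with $s\in\cH^0(\mc{F})$ any nonzero section and $\alpha$ the generator of the one-dimensional $\cH^1(\mc{F})^\vee$ dual to the coboundary class of the sequence above. Using Krug's explicit description of Yoneda products on tautological Ext-groups from \cite[Sect.\ 7]{Kru11}, I would then extract the component of $e\circ e$ lying in $\cH^0(\mc{F})^\vee\otimes\cH^0(\mc{F})\subset\Ext^2(\tamc{F},\tamc{F})$.

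The key technical step, and the main obstacle, is carrying out this last extraction. The naive composition $\alpha\circ s$ lives in $\Ext^1(\mc{O}_X,\mc{O}_X)=\cH^1(\mc{O}_X)=0$, so no contribution can come from that factor. Instead, the desired term arises from the combinatorial piece of Krug's formula that encodes the $\Sy{2}$-symmetry underlying $\tamc{F}$; unravelling it, the component of $e\circ e$ in $\cH^0(\mc{F})^\vee\otimes\cH^0(\mc{F})$ should equal, up to a nonzero universal constant, the rank-one tensor $s^\vee\otimes s$ multiplied by a pairing between $\alpha$ and a class naturally associated with $s$, whose nonvanishing one can verify directly for our specific $\mc{F}$ thanks to the abundance $h^0(\mc{F})=k+1\geq 3$ compared with $h^1(\mc{F})=1$. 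Once this is established, $\kappa_2(e)\neq 0$, the tangent cone of $\mc{M}^s(\ta{v})$ at $[\tamc{F}]$ is strictly smaller than its Zariski tangent space, and hence the moduli space is singular at this point.
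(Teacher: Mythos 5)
Your setup is fine --- the cohomology count $h^0(\mc{F})=k+1$, $h^1(\mc{F})=1$, $h^2(\mc{F})=0$ and the resulting decomposition of $\Ext^1(\tamc{F},\tamc{F})$ and $\Ext^2(\tamc{F},\tamc{F})$ agree with the paper. The gap is in your final step: you try to produce a nonzero square out of a class $e=s\otimes\alpha$ lying \emph{purely} in the additional summand $\cH^1(\mc{F})^\vee\otimes\cH^0(\mc{F})$, and you conjecture that Krug's product rules contribute a term $s^\vee\otimes s$ times some pairing. They do not. According to the explicit decomposition of the Yoneda square (which the paper extracts from \cite[Sect.\ 7]{Kru11}), the square of $e+a\otimes b$ with $e\in\Ext^1(\mc{F},\mc{F})$, $a\in\cH^1(\mc{F})^\vee$, $b\in\cH^0(\mc{F})$ is
\[
e\circ e+(a\circ e)\otimes b+a\otimes(e\circ b),
\]
with no term quadratic in $a\otimes b$: the self-composition of additional classes factors through groups like $\Ext^1(\mc{O}_X,\mc{O}_X)=0$ --- exactly the vanishing you yourself observe for $\alpha\circ s$ --- and there is no further ``combinatorial'' contribution landing in $\cH^0(\mc{F})^\vee\otimes\cH^0(\mc{F})$. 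So the square of your chosen class is zero and the argument stalls precisely at the step you flag as the main obstacle.

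The nonvanishing must instead come from the \emph{cross term} between the surface-deformation summand and the additional summand, i.e.\ from the bilinear map $\Ext^1(\mc{F},\mc{F})\times\cH^0(\mc{F})\rightarrow\cH^1(\mc{F})$, $(e,b)\mapsto e\circ b$. This is where the geometry of the base curve enters (you use it only to compute $h^0$ and $h^1$, but it is the crux): since $p$ lies on the base component $C$ of $|\mc{L}|$, we have $\cH^0(\mc{F})\cong\cH^0(\mc{L})$, and a deformation $e$ of $\mc{F}=\mc{L}\otimes\mc{I}_p$ corresponding to moving $p$ off $C$ destroys the base-point condition, so some section $b$ does not deform along $e$, i.e.\ $e\circ b\neq0$ in $\cH^1(\mc{F})$. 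Taking a mixed class $e+a\otimes b$ with such $e$ and $b$ and any $a\neq 0$ then gives $a\otimes(e\circ b)\neq0$ in the summand $\cH^1(\mc{F})^\vee\otimes\cH^1(\mc{F})$ of $\Ext^2(\tamc{F},\tamc{F})$, which is what proves $\kappa_2\neq0$.
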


\begin{proof} We have to analyse the Yoneda square
\begin{equation*}\begin{array}{ccc}\Ext^1(\tamc{F},\tamc{F})&\rightarrow &\Ext^2(\tamc{F},\tamc{F}).\\
x&\mapsto &x\circ x
\end{array}\end{equation*}
Therefore let us use Krug's formula (\ref{eqtautext}) in Corollary \ref{cortautext} to write down the extension groups explicitly. Note that $h^2(\mc{F})=0.$
\begin{equation*}\begin{array}{rcl}
\Ext^1(\tamc{F},\tamc{F})&\cong& \Ext^1(\mc{F},\mc{F})\bigoplus \cH^1(\mc{F})^\vee\otimes \cH^0(\mc{F}),\\[4pt]
\Ext^2(\tamc{F},\tamc{F})&\cong& \Ext^2(\mc{F},\mc{F})\bigoplus \cH^0(\mc{F})^\vee\otimes \cH^0(\mc{F})\bigoplus \cH^1(\mc{F})^\vee\otimes \cH^1(\mc{F}).
\end{array}\end{equation*}
According to this decomposition we can decompose the Yoneda square as well following the detailed formulas in \cite[Sect.\ 7]{Kru11}:
\begin{equation*}\begin{array}{ccccc}
\Ext^1(\mc{F},\mc{F})&\bigoplus& \cH^1(\mc{F})^\vee\otimes \cH^0(\mc{F})&\rightarrow& \\
e&+&a\otimes b&\mapsto&\vspace{0.5cm}\\

\Ext^2(\mc{F},\mc{F})&\bigoplus& \cH^0(\mc{F})^\vee\otimes \cH^0(\mc{F})&\bigoplus& \cH^1(\mc{F})^\vee\otimes \cH^1(\mc{F}).\\
\underbrace{e\circ e}_{=0}&+&(a\circ e)\otimes b&+&a\otimes(e\circ b)
\end{array}\end{equation*}
Hence we need to show that the map
\[\Ext^1(\mc{F},\mc{F})\times \cH^0(\mc{F})\rightarrow \cH^1(\mc{F})\]
is not the zero map. The geometric interpretation of this map is the following: Let $e\in\Ext^1(\mc{F},\mc{F})$ be an infinitesimal deformation of $\mc{F}$ and $\varphi\in \cH^0(\mc{F})$ be a global section. Then $\varphi\circ e$ is zero if and only if we can deform the section $\varphi$ along $e$.

It is time to return to the geometry of our example. Since $p$ is on the base curve $C$, we have $\cH^0(\mc{F})\cong \cH^0(\mc{L})$. The deformations of $\mc{F}$ are those of $\mc{I}_p,$ which correspond to deforming the point $p$ in $X$. Now if we deform $p$ into a direction normal to $C$, the space of global sections will shrink since the point will fail to be a base point of $\mc{L}$ and thus we can find a section $\varphi\in \cH^0(\mc{F})$ that does not deform with $e$. \end{proof}

The Zariski tangent space is $(k+3)$-dimensional and we can explicitly derive the quadratic equation of the tangent cone. It is equivalent to the intersection of a plane (corresponding to the surface deformations) and a hyperplane (the additional deformations and the curve $C$) in a line (the curve $C$).

\subsection{Deformations of the Manifold $\tn{X}$} \label{subsectiondefmani}
A question which has not been touched so far, is the following: The manifold $\tn{X}$ has an unobstructed deformation theory. Does the tautological sheaf $\tnmc{F}$ deform with $\tn{X}$?

The technique to answer this question is presented in \cite{HT10}. We can summarise as follows:

\begin{thm}[Huybrechts$-$Thomas]
Let $Y$ be a projective manifold and $\mc{E}$ a sheaf on $Y.$ Let $\kappa\in \cH^1(Y,\mc{T}_Y)\cong\Ext^1(\Omega_Y,\mc{O}_Y)$ be the Kodaira$-$Spencer class of an infinitesimal deformation of $Y$ and denote by $\At(\mc{E})\in \Ext^1(\mc{E},\mc{E}\otimes\Omega_Y)$ the Atiyah class of $\mc{E}.$ The sheaf $\mc{E}$ can be deformed along $\kappa$ if and only if
\[0=\mathrm{ob}(\kappa,\mc{E}):=(\kappa\otimes\id_\mc{E})\circ \At(\mc{E})\in\Ext^2(\mc{E},\mc{E}).\]
\end{thm}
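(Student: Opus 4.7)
My plan is to compute the obstruction via Čech cohomology and then identify it with the Yoneda product. Choose an affine open cover $\{U_i\}$ of $Y$ fine enough that the deformation $\pi\colon\mc{Y}\to\mathrm{Spec}\,\CC[\epsilon]$ corresponding to $\kappa$ is trivial on each $U_i$, with transition automorphisms of the form $\id+\epsilon\theta_{ij}$ for vector fields $\theta_{ij}\in\mc{T}_Y(U_{ij})$, and such that $\mc{E}|_{U_i}$ admits a holomorphic connection $\nabla_i$. Then $\{\theta_{ij}\}$ is a Čech $1$-cocycle representing $\kappa$, and $\{\nabla_i-\nabla_j\}\in C^1(\Omega_Y\otimes\mc{H}om(\mc{E},\mc{E}))$ represents $\At(\mc{E})$ by the classical construction recalled in \cite[Sect.~10.1]{HL97}.

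On each $U_i$ one has the canonical flat lift $\widetilde{\mc{E}}_i:=\mc{E}|_{U_i}\otimes_\CC\CC[\epsilon]$, regarded as an $\mc{O}_{\mc{Y}}|_{U_i}$-module via the chosen trivialisation. Over a double overlap $U_{ij}$ the twist $\id+\epsilon\theta_{ij}$ of the structure sheaf forces an isomorphism $\widetilde{\mc{E}}_i\simeq\widetilde{\mc{E}}_j$ of the shape $s\mapsto s+\epsilon\,\iota_{\theta_{ij}}\nabla_i(s)$; the connection $\nabla_i$ is needed to interpret contraction by $\theta_{ij}$ as an endomorphism-valued first-order correction. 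Evaluating the triple cocycle condition on $U_{ijk}$ produces a Čech $2$-cocycle with values in $\mc{H}om(\mc{E},\mc{E})$. A direct bookkeeping shows that, modulo coboundaries arising from different choices of the $\nabla_i$, this cocycle coincides with the cup product of $\{\theta_{ij}\}$ and $\{\nabla_i-\nabla_j\}$, that is, with the Čech representative of $(\kappa\otimes\id_{\mc{E}})\circ\At(\mc{E})\in\Ext^2(\mc{E},\mc{E})$. Vanishing of this class is then equivalent to the existence of corrections by Čech $1$-cochains of endomorphisms making the local lifts agree on overlaps, hence to the existence of a global flat lift $\widetilde{\mc{E}}$ on $\mc{Y}$.

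The main obstacle is the final identification in the previous paragraph: matching the Čech formula for the obstruction with the Yoneda product up to the correct sign requires careful conventions and somewhat painful combinatorics. A cleaner alternative is to work intrinsically in the derived category. There $\kappa$ is the extension class of $0\to\mc{O}_Y\to\Omega_{\mc{Y}}|_Y\to\Omega_Y\to 0$, $\At(\mc{E})$ is the class of the jet extension $0\to\mc{E}\otimes\Omega_Y\to J^1(\mc{E})\to\mc{E}\to 0$, and the Yoneda composition of these two extensions is by construction the standard obstruction to promoting the $\mc{O}_Y$-module structure on $\mc{E}$ to an $\mc{O}_{\mc{Y}}$-module structure. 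I would attempt this derived-categorical route first and fall back on the explicit Čech computation only if the comparison proves delicate.
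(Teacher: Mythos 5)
The paper offers no proof of this statement: it is quoted as a theorem of Huybrechts and Thomas (with a pointer to their paper), so your proposal is being measured against the literature rather than against an argument in the text. Your \v{C}ech strategy is the right picture for vector bundles, but it breaks down exactly where the theorem has content: you assume that $\mc{E}|_{U_i}$ admits a holomorphic connection $\nabla_i$ on each member of an affine cover, and this is false for coherent sheaves that are not locally free. The obstruction to a local connection is the Atiyah class computed over the affine $U_i$, which need not vanish; for instance $M=\CC[t]/(t)$ over $\CC[t]$ admits no connection, since the Leibniz rule forces $0=\nabla(t\cdot 1)=t\cdot\nabla(1)+1\otimes dt=dt$. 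So the gluing formula $s\mapsto s+\epsilon\,\iota_{\theta_{ij}}\nabla_i(s)$ is only available for locally free $\mc{E}$, whereas the statement concerns an arbitrary sheaf. In the general case one must work with locally free resolutions, and the ``direct bookkeeping'' is then no longer a cup product of two honest \v{C}ech cocycles but a genuinely derived composition --- which is precisely why Huybrechts and Thomas set up the machinery they did.

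The fallback you propose does not repair this. Asserting that the Yoneda composite of the conormal extension $0\to\mc{O}_Y\to\Omega_{\mc{Y}}|_Y\to\Omega_Y\to0$ with the jet extension $0\to\mc{E}\otimes\Omega_Y\to J^1(\mc{E})\to\mc{E}\to0$ is ``by construction'' the obstruction to lifting the module structure is exactly the statement to be proved, not a construction: the deformation-theoretic obstruction class is defined via the pseudo-torsor of local lifts (or the cotangent complex), and identifying it with that particular product is the mathematical weight of the theorem. As written, your first route proves the result only for locally free $\mc{E}$ modulo a sign-sensitive computation you defer, and your second route is circular at the key step; a complete argument must either restrict to bundles and carry out the triple-overlap comparison, or follow Huybrechts--Thomas through the truncated cotangent complex.
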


For every sheaf $\mc{E}$ on $Y$ there are natural trace maps
\begin{eqnarray*}
\tr\colon\Ext^1(\mc{E},\mc{E}\otimes\Omega_Y)&\rightarrow&\cH^1(Y,\Omega_Y)\quad\text{and}\\
\tr\colon\Ext^2(\mc{E},\mc{E})&\rightarrow&\cH^2(Y,\mc{O}_Y),
\end{eqnarray*}
which --- up to a sign --- commute with the Yoneda product. Furthermore, it is well known that
\begin{eqnarray*}
\tr(\At(\mc{E}))&=&c_1(\mc{E})\quad\text{and}\\
\tr(\mathrm{ob}(\kappa,\mc{E}))&=&\mathrm{ob}(\kappa,\det\mc{E})
\end{eqnarray*}

Applying this theorem to our situation, we get the following picture:
The tangent space of the Kuranishi space at the point corresponding to $\tn{X}$ is isomorphic to

\[\cH^1(\tn{X},\mc{T}_{\tn{X}})\cong \cH^1(\tn{X},\Omega_{\tn{X}})\cong \cH^1(X,\Omega_X)\oplus \CC\delta_n.\]

We write a class in $\cH^1(\tn{X},\Omega_{\tn{X}})$ as $(\kappa,a)$ with $\kappa\in \cH^1(X,\Omega_X)$ the class of an infinitesimal deformation of the surface $X$ and $a\in\CC$. Unfortunately there is no decomposition of the Atiyah class $\At(\tnmc{F})$ at hand. But we can at least study its trace $\tr(\At(\tnmc{F}))=c_1(\tnmc{F})=c_1(\mc{F})_{\tn{X}}-r\delta_n,$ where we set $r:=\rk\mc{F}.$ We have:
\[\tr(\mathrm{ob}((\kappa,a),\tnmc{F}))=\mathrm{ob}(\kappa,\det\mc{F})-ra\delta^2_n\in \cH^2(\tn{X},\mc{O}_{\tn{X}}).\]
But we have $\mathrm{ob}(\kappa,\det\mc{F})=\kappa\cdot c_1(\mc{F})$ and $\delta^2_n=2(1-n),$ where we consider the Beauville$-$Bogomolov pairing.
Thus we see:
\begin{itemize}
\item If $\mc{F}$ deforms along $\kappa$, then surely the tautological sheaf $\tnmc{F}$ deforms along $(\kappa,0)$.
\item If the determinant line bundle $\det\mc{F}$ does not deform along $\kappa$, then $\tnmc{F}$ does not deform along $(\kappa,0)$.
\item If $\kappa\cdot c_1(\mc{F})\neq 2(1-n)ra$, the tautological sheaf $\tnmc{F}$ does not deform along $(\kappa,a)$.
\end{itemize}

Thus there is an interesting hyperplane inside the space of infinitesimal deformations of $\tn{X}$ consisting of all pairs $(\kappa,a)$ such that $\kappa\cdot c_1(\mc{F})=2(1-n)ra$: It is an open question if the tautological sheaf deforms along these directions.

\newpage
\addcontentsline{toc}{section}{Bibliography}


\begin{thebibliography}{einszweid}
\bibitem[Ara]{Ara} D.\ Arapura,
	\em Abelian Varieties and Moduli, \em 
	http://www.math.purdue.edu/\~{} dvb/
	
\bibitem[Ati57]{Ati57} M. Atiyah, 
	\em Complex analytic connections in fibre bundles, \em
	Trans.\ Amer.\ Math.\ Soc.\ \textbf{85} (1957), 181-207. 
	
\bibitem[Beau83]{Beau83}A.\ Beauville,
	\em Vari\'{e}t\'{e}s K\"ahleriennes dont la premi\`{e}re classe de Chern est nulle, \em
	J.\ Diff.\ Geom.\ \textbf{18} (1983), 755-782.

\bibitem[BHPV04]{BHPV04}{W.\ Barth, K.\ Hulek, C.\ Peters, A.\ Van de Ven},
	{\it Compact Complex Surfaces (Second Enlarged edition),\it}
	{Erg.\ der Math.\ und ihrer Grenzgebiete, 3.\ Folge, Band 4, Springer, 2004}.


\bibitem[BL92]{BL92} G.\ Birkenhake, H.\ Lange, 
	\em Complex abelian varieties, \em 
	Grundlehren Math.\ Wiss.\ \textbf{302}, Springer, 1992.

\bibitem[EGL01]{EGL01} G.\ Ellingsrud, L.\ G\"ottsche, M.\ Lehn,
	\em On the cobordism class of the Hilbert scheme of a surface, \em
	Journal of Algebraic Geometry \textbf{10} (2001), 81-100.

\bibitem[ES98]{ES98} G.\ Ellingsrud, S.\ A.\ Str\o mme,
	\em An intersection number for the punctual Hilbert scheme of a surface, \em
	Trans.\ Amer.\ Math.\ Soc.\ \textbf{350}, no.\ 6 (1998), 2547-2552.

\bibitem[GH98]{GH98} V. Gritsenko, K. Hulek,
	\em Minimal Siegel modular threefolds, \em
	Math. Proc. Cambridge Philos. Soc. \textbf{123}, no.\ 3 (1998), 461–485.

\bibitem[Hai01]{Hai01} M.\ Haiman,
	\em Hilbert schemes, polygraphs and the Macdonald positivity conjecture, \em
	Journal of the American Mathematical Society \textbf{14}, no.\ 4 (2001), 941-1006 (electronic).

\bibitem[HL97]{HL97}{D.\ Huybrechts, M.\ Lehn},
	{\it The geometry of moduli spaces of sheaves,\it}
	{Aspects of Mathematics 31, Friedr.\ Vieweg \& Sohn, Braunschweig 1997.}

\bibitem[Huy99]{Huy99} D. Huybrechts,
	\em Compact hyperk\"ahler manifolds: basic results, \em
	Invent. Math. \textbf{135} (1999), 63-113. Erratum in: Invent. Math. \textbf{152} (2003), 209-212.

\bibitem[Huy06]{Huy06} D. Huybrechts, 
	\em Fourier$-$Mukai Transforms in Algebraic Geometry, \em 
	Oxford Mathematical Monographs, 2006.

\bibitem[KKV89]{KKV89} F. Knop, H. Kraft, Th. Vust, 
	\em The Picard group of a G-variety, \em 
	Algebraische Transformationsgruppen und Invariantentheorie, (H. Kraft, P. Slodowy, T. Springer eds.) DMV Semin. \textbf{13}, Basel-Boston-Berlin: Birkh\"auser 1989, 77–88.

\bibitem[Kru11]{Kru11}{A. Krug},
	{\it Extension groups of tautological sheaves on Hilbert schemes,\it}
	alg-geom/1111.4263, (2011), 49 pages.

\bibitem[Laz04]{Laz04} R. Lazarsfeld,
	\em Positivity in algebraic geometry I, \em
	Ergebnisse der Mathematik und ihrer Grenzgebiete, 3. Folge, vol. 48, Springer-Verlag, Berlin, 2004.


\bibitem[Scal09b]{Scal09b} L.\ Scala,
	\em Some remarks on tautological sheaves on Hilbert schemes of points on a surface, \em
	Geom. Dedicata, Vol. 139, no. 1 (2009), 313-329.

\bibitem[Sch10]{Sch10} U. Schlickewei,
	\em Stability of tautological vector bundles on Hilbert squares of surfaces,  \em
	Rendiconti del Seminario Matematico della Universit\'{a} di Padova \textbf{124} (2010).

\bibitem[Wan12]{Wan12} M.\ Wandel,
	\em Stability of tautological bundles on the Hilbert scheme of two points on a surface, \em to appear in Nag.\ Math.\ J.\, alg-geom/1202.6528 (2012), 14 pages.




\end{thebibliography}
\end{document}